%%%%%%%%%%%%%%%%%%%%%%%%%%%%%%%%%%%%%%%%%%%%%%%%%%%%%%%%%%%%%%%%%%
\documentclass[10pt]{amsart}
\usepackage{amsmath,amssymb,enumerate}

\newtheorem{thm}{Theorem}[section]
 \numberwithin{equation}{section} %% Comment out for sequentially-numbered
 \numberwithin{figure}{section} %% Comment out for sequentially-numbered
 \theoremstyle{plain}
 \theoremstyle{plain}    
 \newtheorem{cor}[thm]{Corollary} %%Delete [thm] to re-start numbering
 \theoremstyle{plain}    
 \newtheorem{prop}[thm]{Proposition} %%Delete [thm] to re-start numbering
 \theoremstyle{plain}    
 \newtheorem{lem}[thm]{Lemma} %%Delete [thm] to re-start numbering
 
  \newtheorem{defi}[thm]{Definition}
\newtheorem{exa}[thm]{Example}

%
%%%%%%%%%%%%%%%%%%%%%%%%%%%%%%%%%%%%%%%%%%%%%%%%%%%%%%%%%%%%%%%%%%
%

%\usepackage{xcolor}
%\usepackage{comment}
%\definecolor{violet}{rgb}{0.0,0.2,0.7}
%\definecolor{rouge2}{rgb}{0.8,0.0,0.2}
%\usepackage{hyperref}
%\usepackage{relsize}
%
%
%	
%\hypersetup{
%    bookmarks=true,         % show bookmarks bar?
%    unicode=false,          % non-Latin characters in Acrobat’s bookmarks
%    pdftoolbar=true,        % show Acrobat’s toolbar?
%    pdfmenubar=true,        % show Acrobat’s menu?
%    pdffitwindow=false,     % window fit to page when opened
%    pdfstartview={FitH},    % fits the width of the page to the window
%    pdftitle={},    % title
%    pdfauthor={},     % author
%    colorlinks=true,       % false: boxed links; true: colored links
%   linkcolor=violet,          % color of internal links
%    citecolor=rouge2,        % color of links to bibliography
%    filecolor=black,      % color of file links
%    urlcolor=cyan}           % color of external links

\setcounter{tocdepth}{1}

\newcommand{\N}{\mathbb{N}}
\newcommand{\R}{\mathbb{R}}
\newcommand{\C}{\mathbb{C}}

\newcommand{\Pc}{\mathcal{P}}
\newcommand{\Cc}{\mathcal{C}}
\newcommand{\Uc}{\mathcal{U}}
\newcommand{\Ccu}{\mathcal{C}^{\uparrow}}
\newcommand{\f}{\varphi}
\newcommand{\p}{\psi}
\newcommand{\Ec}{\mathcal{E}}
\newcommand{\Fc}{\mathcal{F}}
\newcommand{\Mcc}{\mathcal{M}}
\newcommand{\Fcm}{\mathcal{F}_{\mu}}

\newcommand{\SH}{\mathcal{SH}}
\newcommand{\ind}{{\bf 1}}
\newcommand{\weak}{\rightharpoonup}

\newcommand{\setdef}{\ \big\vert \ }
\newcommand{\Capa}{{\rm Cap}}

\newcommand{\vol}{{\rm Vol}}

\newcommand{\SHXo}{\SH_m(X,\omega)}
\newcommand{\vep}{\varepsilon}
\newcommand{\PmXo}{\mathcal{P}_m(X,\omega)}
\newcommand{\EcXo}{\mathcal{E}(X,\omega,m)}
\newcommand{\Ect}{\mathcal{E}^1(X,\omega,m)}
\newcommand{\EctC}{\mathcal{E}^1_C(X,\omega,m)}
\newcommand{\Capm}{\Capa_{\omega,m}}

\newtheorem{theorem}{Theorem}

\begin{document}
\title[Degenerate Complex Hessian equations]{Degenerate complex Hessian equations on compact K\"ahler manifolds} 
\date{\today \\
The first-named author is  partially supported by the french ANR project MACK} 

\author{Chinh H. Lu}
\address{Chalmers University of Technology \\ Mathematical Sciences\\
412 96 Gothenburg\\ Sweden}

\email{chinh@chalmers.se}

\author{Van-Dong Nguyen} 

\address{Department of Mathematics-Informatics \\
Ho Chi Minh city University of Pedagogy\\ 280 An Duong Vuong\\ Ho Chi Minh city, Vietnam}

\email{dong5591@gmail.com}

\begin{abstract}
Let $(X,\omega)$ be a compact K\" ahler manifold of dimension $n$ and fix $m\in \N$ such that $1\leq m \leq n$. We prove that  any 
$(\omega,m)$-sh function can be approximated from above by smooth 
$(\omega,m)$-sh functions. A  potential theory for the complex Hessian equation is also developed  which generalizes the classical  pluripotential theory on compact K\"ahler manifolds. We then use novel variational tools due to Berman, Boucksom, Guedj and Zeriahi  to study degenerate complex Hessian equations.
\end{abstract}

\maketitle
\tableofcontents

\section{Introduction}
Let $(X,\omega)$ be a compact  K\"{a}hler manifold of complex dimension $n$. Let $m$ be a natural number   between $1$ and  $n$. Denote by $d,d^c$ 
the usual real differential operators $d:=\partial+\bar{\partial}$, 
$d^c=\frac{\sqrt{-1}}{2\pi}(\bar{\partial}-\partial)$ 
so that $dd^c =\frac{i}{\pi}\partial\bar{\partial}$.

The complex $m$-Hessian equation can be considered as an interpolation between the classical Poisson equation (corresponds to the case when $m=1$) and the
complex Monge-Amp\`ere equation (corresponds to $m=n$) which has been studied intensively in the recent years with many applications to complex geometry. For recent developments of the latter, see \cite{Y78,Bl05,GZ05,GZ07,Kol98,Kol05} and the references therein.

The study of complex Hessian equations was initiated by Li in \cite{Li04} where 
he solved the Dirichlet problem with smooth data on a smooth strictly 
$m$-pseudoconvex domain in $\C^n$. B{\l}ocki \cite{Bl05} developed the first steps of a potential theory for this equation and suggested a study of the corresponding equation on compact K\"ahler manifolds which is analogous to the complex Monge-Amp\`ere equation. 

The non-degenerate complex Hessian equation  is of the following form
\begin{equation}\label{eq: Hes intro}
(\omega +dd^c \f)^m\wedge \omega^{n-m} =f \omega^n,
\end{equation}
where $0 < f\in \Cc^{\infty}(X)$ satisfies the necessary condition $\int_X f\omega^n= \int_X \omega^n$. It  has been studied by Kokarev \cite{Kok10}, Jbilou \cite{Jb10} and
Hou-Ma-Wu \cite{H09},\cite{HMW10}. In \cite{Jb10} and \cite{H09} the authors  independently proved that equation (\ref{eq: Hes intro}) has a unique (up to an additive constant)
smooth \textit{admissible} solution provided the metric $\omega$ has non-negative holomorphic bisectional curvature. This technical assumption turned out to be very strong since manifolds carrying such metrics are very restrictive thanks to a uniformization theorem of Mok (see \cite{Mok88}). Another effort from Hou-Ma-Wu \cite{HMW10} provided an a priori \textit{almost} $\Cc^2$-estimate without any curvature assumption. The authors also mentioned that their estimate 
can also be used in a blow-up analysis which actually reduced the problem of solving the complex Hessian equation on a compact K\"ahler manifold to proving 
a Liouville-type theorem for $m$-subharmonic functions in $\C^n$. The latter has been recently solved by Dinew and Ko{\l}odziej \cite{DK12} which confirmed the smooth resolution of equation (\ref{eq: Hes intro}) in full generality. Dinew and Ko{\l}odziej also gave a very powerful $\Cc^0$-estimate in \cite{DK11} which allows one to find continuous weak solution of the degenerate complex Hessian equation with $f\in L^p(X)$ for some $p>n/m$. 

The \textit{real} Hessian equation has been studied intensively with many geometric applications. For a survey of this theory we refer the reader to \cite{Tr95}, \cite{TW99}, \cite{W09}, \cite{CW01} and the references therein. 
Some similar non-linear elliptic equations of have appeared in the study of geometric deformation flows such as the $J$-flow (see \cite{SW}). From the point of view of non-linear elliptic partial differential equations the complex Hessian equation is an interesting and important object. Recently another general (and powerful) $\Cc^{2,\alpha}$ estimate for equations of this type has been obtained in \cite{TWWY}. 
In \cite{AV10} equations of complex Hessian type appear in the study of quaternionic geometry. 
Thus it is expected that the complex Hessian equation we considered here will have some interesting geometric applications.

The notion of $(\omega,m)$-subharmonic functions has been introduced in \cite{DK11} and studied by the first-named author in a systematic way in \cite{Lu13b}. It was not  clear how to define the complex Hessian operator 
for any bounded $(\omega,m)$-sh function due to a lack of regularization process. Recently Plis \cite{Pl13} proved that  one can approximate continuous strictly $(\omega,m)$-sh functions by smooth ones. 
In this paper we show that one can globally approximate any $(\omega,m)$-subharmonic functions from above by a sequence of smooth
 $(\omega,m)$-subharmonic functions. 
In particular, the class
$\PmXo$ introduced in \cite{Lu13a} consists of all $(\omega,m)$-subharmonic functions.

For any  upper semicontinuous function $f$ we define the projection of $f$ on the space of $(\omega,m)$-subharmonic functions by:
$$
P(f) := \sup \left\{u\in \SHXo \setdef u\leq f\right\}.
$$ 

\medbreak

Using Berman's technique \cite{Berman13} combined with the viscosity method  by Eyssidieux, Guedj and Zeriahi \cite{EGZ11} we can prove that the projection of a smooth function is continuous.  Moreover, we can also prove the orthogonal relation without 
solving the local Dirichlet problem. Let us stress that our method is new even in the case of complex Monge-Amp\`ere equations ($n=m$).

\begin{theorem}
\label{theorem: orthogonal}
{\it Let $(X,\omega)$ be a compact 
K\"ahler manifold of dimension $n$ and fix $m\in \N$ such that
$1\leq m\leq n$. Let $h$ be a continuous function on $X$. The 
$(\omega,m)$-subharmonic function $P(h)$ is continuous and its Hessian measure is a non-negative measure supported on the contact set $\{P(h) = h\}$. }
\end{theorem}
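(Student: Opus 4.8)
The plan is to adapt Berman's approach to the projection operator (as in \cite{Berman13}) combined with the viscosity techniques of Eyssidieux--Guedj--Zeriahi \cite{EGZ11}, using as an essential input the global approximation of $(\omega,m)$-sh functions by smooth ones proved earlier in the paper. First I would establish continuity of $P(h)$. Since $h$ is continuous and $X$ is compact, $h$ can be uniformly approximated by smooth functions, and it suffices to treat $h\in\Cc^\infty(X)$: if $\|h_1-h_2\|_{L^\infty(X)}\le\vep$ then $P(h_1)\le P(h_2)+\vep$ and vice versa, so continuity of $P$ on smooth functions, together with uniform convergence $P(h_j)\to P(h)$, gives continuity of $P(h)$ in general. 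For smooth $h$, upper semicontinuity of $P(h)$ is automatic; for lower semicontinuity one argues that $P(h)$ is a \emph{viscosity supersolution} of the degenerate Hessian equation $(\omega+dd^c u)^m\wedge\omega^{n-m}\ge 0$ on $X$ (which it is, being an envelope of $(\omega,m)$-sh functions, hence itself $(\omega,m)$-sh once we know the envelope is not identically $-\infty$, which holds because $h$ is bounded), and that on the open set where $P(h)<h$ it is moreover a viscosity \emph{subsolution}, hence $(\omega+dd^c P(h))^m\wedge\omega^{n-m}=0$ there in the viscosity sense; by the comparison/regularity results quoted from \cite{EGZ11}, a function that is simultaneously a bounded viscosity sub- and supersolution of this equation on an open set is continuous there. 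Near the contact set $\{P(h)=h\}$ continuity follows by squeezing $P(h)$ between $h-\vep$-perturbations of barriers and $h$ itself.

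The second and main assertion is the orthogonality relation: the Hessian measure $\MA_{\omega,m}(P(h)):=(\omega+dd^c P(h))^m\wedge\omega^{n-m}$ is supported on the contact set. The idea is a variational/comparison argument. Consider the functional $u\mapsto \int_X (h-u)\,\MA_{\omega,m}(u)$ or, more robustly, differentiate the energy: for $t\ge 0$ small, compare $P(h)$ with $P(h+t\chi)$ where $0\le\chi\in\Cc^\infty(X)$ is supported in the open set $U:=\{P(h)<h\}$. On the one hand $P(h+t\chi)\ge P(h)$ since $h+t\chi\ge h$, and by the first part both are continuous. On the other hand, because $\chi$ is supported where $P(h)$ is \emph{strictly} below $h$, for $t$ small enough one has $P(h+t\chi)=P(h)$ — indeed any competitor $u\le h+t\chi$ with $u$ close to $P(h)$ still satisfies $u\le h$ outside a neighborhood of $\mathrm{supp}\,\chi$, and on $\mathrm{supp}\,\chi$ the slack $h-P(h)\ge\delta>0$ absorbs $t\chi$ for $t\le\delta/\|\chi\|$; one has to check this does not create a larger envelope, which follows from the maximum principle for $(\omega,m)$-sh functions since the only place $u$ could exceed $P(h)$ is inside $U$ and there $P(h)$ already solves the homogeneous equation. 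Granting $P(h+t\chi)=P(h)$ for small $t$, differentiate the "energy" identity $\frac{d}{dt}\big|_{t=0^+} E(P(h+t\chi)) = \int_X \chi\,\MA_{\omega,m}(P(h))$ (the standard first-variation formula for the Monge--Amp\`ere/Hessian energy, valid for continuous $(\omega,m)$-sh functions by the approximation theorem and integration by parts), whose left side is $0$; hence $\int_X\chi\,\MA_{\omega,m}(P(h))=0$ for every such $\chi$, i.e. $\MA_{\omega,m}(P(h))(U)=0$, which is exactly the claim.

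To make this rigorous I would first record that $\MA_{\omega,m}(P(h))$ is a well-defined non-negative Radon measure: this uses the global regularization theorem (so $P(h)$, being continuous and $(\omega,m)$-sh, lies in the domain of definition of the Hessian operator, with $\MA_{\omega,m}$ continuous under decreasing limits of smooth approximants) and the Bedford--Taylor-type machinery for $(\omega,m)$-sh functions developed in the earlier sections. The non-negativity is then immediate. For the first-variation formula I would approximate $h$ and $h+t\chi$ by smooth functions, use that $P$ is Lipschitz for the sup-norm, and pass to the limit in the polarized energy identity $E(u)-E(v)=\frac{1}{m+1}\sum_{j=0}^m\int_X(u-v)(\omega+dd^c u)^j\wedge(\omega+dd^c v)^{m-j}\wedge\omega^{n-m}$; continuity of the mixed Hessian measures under uniform convergence of continuous potentials (again from the approximation theorem plus the standard stability estimates) yields the formula.

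The step I expect to be the main obstacle is proving that $P(h+t\chi)=P(h)$ for small $t>0$ when $\chi$ is supported in $\{P(h)<h\}$ — equivalently, that perturbing the obstacle only where there is slack does not change the envelope. The difficulty is that the obstacle problem is non-local: raising $h$ on $\mathrm{supp}\,\chi$ could conceivably allow a globally larger $(\omega,m)$-sh minorant even though the inequality $u\le h$ is only ever \emph{active} away from $\mathrm{supp}\,\chi$. Resolving this requires the key structural fact — which is really the heart of the theorem — that $P(h)$ already satisfies $\MA_{\omega,m}(P(h))=0$ on the (open) non-contact set in the viscosity sense, so that on any small ball inside $\{P(h)<h\}$ the function $P(h)$ is the maximal $(\omega,m)$-sh function with its boundary values; combined with the comparison principle from \cite{EGZ11} this prevents any enlargement of the envelope under the perturbation. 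In effect the viscosity input and the variational input feed each other, and organizing this without circularity — presumably by first proving the viscosity subsolution property on $\{P(h)<h\}$ directly from the definition of the envelope (a "bumping" argument: if the Hessian jet were strictly positive at an interior point one could push $P(h)$ up a little and stay a subcompetitor), and only then running the variational computation — is the technical crux.
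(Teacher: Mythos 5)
Your proposal is built around a variational/perturbation scheme (perturb the obstacle by $t\chi$ with $\chi$ supported in $\{P(h)<h\}$, show the envelope does not move, and differentiate the energy), and as you yourself observe at the end, this scheme is circular as it stands. The first-variation formula $\frac{d}{dt}E\circ P(h+t\chi)\big|_{t=0}=\int_X\chi\,H_m(P(h))$ is Lemma \ref{lem: BB08} of the paper, and its proof there explicitly invokes $\int_X(h-P(h))H_m(P(h))=0$, i.e.\ the orthogonality relation you are trying to prove; without orthogonality the two-sided bound from Lemma \ref{lem: primitive} collapses to $0\le 0\le 0$ when $P(h+t\chi)=P(h)$ and yields nothing. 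Likewise, your argument that $P(h+t\chi)=P(h)$ for small $t$ appeals to the fact that $P(h)$ ``already solves the homogeneous equation'' on $\{P(h)<h\}$ --- which is again the conclusion. The escape route you sketch (prove first, by a bumping argument, that $P(h)$ is a viscosity solution of $H_m(u)=0$ on the non-contact set, then transfer to the pluripotential Hessian measure) is the classically available alternative, but it needs two nontrivial inputs you do not supply: a comparison principle for the \emph{homogeneous} degenerate equation (the viscosity comparison results of \cite{EGZ11} require strict monotonicity in $u$ or a positive right-hand side, so your claim that a bounded viscosity sub- and supersolution of $H_m(u)=0$ is automatically continuous is not justified), and the equivalence of viscosity and pluripotential solutions for that homogeneous equation. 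Your continuity argument for smooth $h$ rests on the same unproved comparison statement.

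The paper's proof takes an entirely different, non-circular route: Berman's $\beta$-convergence. For smooth $f$ one solves $(\omega+dd^c u_\beta)^m\wedge\omega^{n-m}=e^{\beta(u_\beta-f)}\bigl(H_m(f)^++\beta^{-1}\omega^n\bigr)$ (solvable with continuous solutions by \cite{Lu13a}), shows $u_\beta\le f$ by evaluating the viscosity subsolution inequality at a maximum point of $u_\beta-f$ (Lemma \ref{lem: vis pot gen}), shows $\beta\mapsto u_\beta$ is monotone by the comparison principle, and obtains uniform convergence from the uniformly bounded densities via \cite{DK11}. The orthogonality then falls out for free: on $\{P(f)<f-\vep\}$ the factor $e^{\beta(u_\beta-f)}$ tends to $0$ uniformly, so $H_m(u_\beta)\to 0$ there, and stability of $H_m$ under uniform convergence gives $H_m(P(f))\le\ind_{\{P(f)=f\}}H_m(f)^+$; continuity of $P(f)$ is inherited from the uniform convergence of the continuous $u_\beta$, and the case of continuous $h$ follows from the contraction property $\sup_X|P(h_1)-P(h_2)|\le\sup_X|h_1-h_2|$ (which you also use, correctly). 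The hardest part of the paper's argument --- identifying the limit $u=\lim u_\beta$ with $P(f)$, done via an auxiliary maximum-point argument with $h-(1+\delta)v_\beta$ --- has no counterpart in your proposal. In short, you have correctly located the crux but not crossed it; the theorem is exactly the point where the paper introduces the exponential family of equations to break the circle.
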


Following \cite{EGZ13} we can  approximate any $(\omega,m)$-sh function from above by a sequence of smooth $(\omega,m)$-sh functions.

\begin{theorem}
\label{theorem: smooth approx}
{\it For every $u\in \SHXo$ we can find a decreasing sequence of smooth 
$(\omega,m)$-subharmonic functions on $X$ which decreases to $u$ on
$X$.}
\end{theorem}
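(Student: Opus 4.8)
The plan is to strip off the difficulties one at a time. First I would reduce the statement to the case where $u$ is \emph{continuous}, using the projection $P$ together with Theorem \ref{theorem: orthogonal}; then I would settle the continuous case by combining Plis' smoothing result \cite{Pl13} with a dilation trick; and finally I would glue the pieces into a single decreasing sequence by a diagonal argument. No new hard analysis is needed once Theorem \ref{theorem: orthogonal} and \cite{Pl13} are available; the work is bookkeeping.

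\emph{Reduction to $u$ continuous.} Given $u\in\SHXo$, let $h_j(x):=\sup_{y\in X}\bigl(u(y)-j\,{\rm dist}(x,y)\bigr)$ be its inf-convolutions; since $u$ is upper semicontinuous and $\not\equiv-\infty$, the $h_j$ are finite, $j$-Lipschitz, decreasing in $j$, and $h_j\downarrow u$ pointwise. Put $\psi_j:=P(h_j)$. Each $h_j$ is continuous, so by Theorem \ref{theorem: orthogonal} each $\psi_j$ is a continuous $(\omega,m)$-sh function, and it is bounded since $\inf_X h_j\le\psi_j\le\sup_X h_j$. Because $u$ is a competitor in the supremum defining $P(h_j)$ we have $u\le\psi_j\le h_j$, and $h_{j+1}\le h_j$ forces $\psi_{j+1}\le\psi_j$; hence $\psi_j\downarrow u$. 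So it suffices to approximate an arbitrary continuous $(\omega,m)$-sh function from above by smooth $(\omega,m)$-sh functions.

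\emph{The continuous case.} Let $v$ be continuous $(\omega,m)$-sh; it is bounded on the compact $X$. For $0<\delta<1$ one has $\omega+dd^c\bigl((1-\delta)v\bigr)=(1-\delta)(\omega+dd^c v)+\delta\omega$, and since $\delta\omega$ is a Kähler form this is strictly $(\omega,m)$-positive; thus $(1-\delta)v$ is continuous and strictly $(\omega,m)$-sh. By \cite{Pl13} it is a uniform limit of smooth (strictly) $(\omega,m)$-sh functions, so for $\delta=1/j$ we may choose a smooth $(\omega,m)$-sh $g_j$ with $\|g_j-(1-1/j)v\|_\infty<1/j$; adding a constant $c_j=O(1/j)$ depending only on $\sup_X|v|$ gives smooth $(\omega,m)$-sh functions with $v\le g_j+c_j\le v+C/j$. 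Passing to a subsequence so that $v\le g_{j_k}+c_{j_k}\le v+2^{-k}$ and setting $\tilde v_k:=g_{j_k}+c_{j_k}+2^{1-k}$, the trivial bound $(g_{j_k}+c_{j_k})-(g_{j_{k+1}}+c_{j_{k+1}})\ge-2^{-(k+1)}$ makes $(\tilde v_k)$ \emph{decreasing}, while $v<v+2^{1-k}\le\tilde v_k$ and $\tilde v_k\le v+3\cdot 2^{-k}$ give $\tilde v_k\downarrow v$ with $\tilde v_k>v$ on $X$.

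\emph{Diagonal argument.} Combining the two steps: $u=\lim_j\downarrow\psi_j$ with $\psi_j$ continuous $(\omega,m)$-sh, and for each $j$ a decreasing sequence of smooth $(\omega,m)$-sh functions $\psi_{j,k}\downarrow\psi_j$ with $\psi_{j,k}>\psi_j$ on $X$. By Dini's theorem each convergence $\psi_{j,k}\downarrow\psi_j$ is uniform; since $\psi_{j+1}\le\psi_j<\psi_{j,k_j}$ on the compact $X$, once $k_j$ is chosen there is $c_j:=\min_X(\psi_{j,k_j}-\psi_{j+1})>0$. Choosing the $k_j$ inductively large enough we can arrange both $\sup_X(\psi_{j+1,k_{j+1}}-\psi_{j+1})<\min(c_j,1/(j+1))$, whence $\psi_{j+1,k_{j+1}}<\psi_{j,k_j}$, and $\psi_{j,k_j}\le\psi_j+1/j$. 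Then $w_j:=\psi_{j,k_j}$ is a decreasing sequence of smooth $(\omega,m)$-sh functions with $u\le\psi_j\le w_j\le\psi_j+1/j$, so $w_j\downarrow u$, which is the claim. The only delicate point is this last bookkeeping — ensuring each constant adjustment preserves $(\omega,m)$-subharmonicity (immediate) and yields a genuinely monotone sequence; the conceptual inputs, namely the continuity of $P(h)$ for continuous $h$ and the smoothing of continuous strictly $(\omega,m)$-sh functions, are exactly Theorem \ref{theorem: orthogonal} and \cite{Pl13}.
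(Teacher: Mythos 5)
Your proof is correct, and its skeleton coincides with the paper's: approximate $u$ from above by continuous (in the paper, smooth) functions $h_j\downarrow u$, pass to the continuous $(\omega,m)$-sh functions $P(h_j)\downarrow u$ via the continuity of the projection, smooth each $P(h_j)$, and monotonize. The genuine difference lies in the middle step. You smooth a continuous $(\omega,m)$-sh function $v$ by dilating to $(1-\delta)v$, which is \emph{strictly} $(\omega,m)$-sh because $\omega+dd^c((1-\delta)v)=(1-\delta)(\omega+dd^cv)+\delta\omega$ and the G{\aa}rding cone is convex, and then invoking Plis's smoothing theorem (Theorem \ref{thm: Plis reg compact}). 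The paper instead exploits the quantitative output of Theorem \ref{thm: upper bound}, namely $H_m(P(f_j))\leq \ind_{\{P(f_j)=f_j\}}H_m(f_j)$, so that $P(f_j)$ solves a Hessian equation with \emph{bounded density} and is therefore a uniform limit of smooth $(\omega,m)$-sh functions by the Dinew--Ko{\l}odziej stability estimates; this keeps the regularization independent of Plis's theorem, a point the authors explicitly emphasize, whereas your argument reinstates that dependence (note Plis's result is still used elsewhere, via Lemma \ref{lem: vis pot 0}). Your route buys a softer argument that needs only the continuity of $P(h)$ and no information on $H_m(P(h))$; the paper's route buys logical independence from \cite{Pl13} and a one-line monotonization, since choosing $\f_j$ with $P(f_j)+\frac{1}{j+1}\leq\f_j\leq P(f_j)+\frac1j$ is automatically decreasing, which replaces your Dini-plus-induction bookkeeping. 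All the steps you leave implicit (strict $(\omega,m)$-subharmonicity of $(1-\delta)v$, finiteness of the Lipschitz regularizations $h_j$, positivity of $c_j=\min_X(\psi_{j,k_j}-\psi_{j+1})$) do check out.
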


The approximation theorem (Theorem \ref{theorem: smooth approx}) was known to hold for \textit{continuous} $(\omega,m)$-sh functions by a recent paper of Plis (\cite{Pl13}) while the same question for any $(\omega,m)$-sh function is still open until now. One can easily figure out that
we only need to regularize any  $(\omega,m)$-sh function by continuous functions and then apply Plis's result.  Let us emphasize that  we  prove the approximation theorem independently by combining the "$\beta$-convergence" method of 
Berman \cite{Berman13} and the envelope method of Eyssidieux-Guedj-Zeriahi \cite{EGZ13}. 
\medbreak

Being able to regularize bounded $(\omega,m)$-subharmonic functions we can define the complex Hessian operator for these functions following the pluripotential method of Bedford and Taylor \cite{BT76}. We then can adapt many arguments in pluripotential theory for complex Monge-Amp\`ere equations to a potential theory
for complex Hessian equations on compact K\"ahler manifolds. We can also mimic the definition of the class $\Ec(X,\omega)$ for $\omega$-psh functions introduced in \cite{GZ07} to define a similar class of $(\omega,m)$-subharmonic functions $\EcXo$. Using the variational approach inspired 
by Berman, Boucksom, Guedj and Zeriahi \cite{BBGZ13} we can solve very degenerate complex Hessian equations with  right-hand sides being positive measures vanishing on $m$-polar sets. The two major steps to apply this method are the regularization process and the orthogonal relation which have been established in the previous results. We also give  simpler proof of the differentiability of the energy functional composed with the projection (see Lemma \ref{lem: BB08}).

\medskip

\begin{theorem}
\label{thm: intro var}
{\it
Let $\mu$ be a positive Radon measure on $X$ satisfying the compatibility condition $\mu(X)= \int_X \omega^n$. Assume that 
$\mu$ does not charge $m$-polar subsets of $X$. Then there exists a  solution $\f\in \EcXo$ to 
$$
(\omega+dd^c \f)^m \wedge \omega^{n-m} =\mu. 
$$
}
\end{theorem}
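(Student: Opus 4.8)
The plan is to follow the variational strategy of Berman--Boucksom--Guedj--Zeriahi \cite{BBGZ13}, now available to us because the two structural obstacles -- regularization (Theorem \ref{theorem: smooth approx}) and the orthogonality relation (Theorem \ref{theorem: orthogonal}) -- have been removed. Concretely, I would first set up the energy functional $E$ on the class $\EcXo$, defined by
$$
E(\f) = \frac{1}{m+1}\sum_{j=0}^{m} \intX \f\, (\omega+dd^c\f)^j \wedge \omega^{n-j},
$$
and the twisted functional $F_\mu(\f) = E(\f) - \intX \f\, d\mu$ (suitably normalized). The first step is to record the basic calculus of $E$: monotonicity, concavity along affine paths, the differentiability formula $\frac{d}{dt}E(\f+tv)|_{t=0} = \intX v\,(\omega+dd^c\f)^m\wedge\omega^{n-m}$, and upper semicontinuity with respect to the relevant ($L^1$ or capacity) topology. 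These are exactly the $(\omega,m)$ analogues of the Monge--Amp\`ere identities and go through once the Hessian operator is defined on bounded potentials via Bedford--Taylor and extended to $\EcXo$ by the usual monotone-limit argument; the integration-by-parts and comparison machinery needed here is precisely the ``potential theory for complex Hessian equations'' announced in the introduction.

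The heart of the argument is the variational principle: show that $F_\mu$ attains its supremum on $\Ect(X,\omega,m)$ (the finite-energy class), and that a maximizer is a weak solution. For existence of a maximizer I would take a maximizing sequence $\f_j$, use the a priori energy bound together with a compactness statement for sub-level sets of $E$ in the finite-energy class (here the hypothesis that $\mu$ puts no mass on $m$-polar sets enters, via an estimate controlling $\intX \f\,d\mu$ by the energy -- the Hessian analogue of the fact that finite-energy potentials are in $L^1(\mu)$ when $\mu$ is non-pluripolar), extract an $L^1$-limit $\f$, and conclude by upper semicontinuity of $F_\mu$. To identify the Euler--Lagrange equation one perturbs $\f$ by $P(\f+tv)$ for a test function $v$: this is where the projection operator and Theorem \ref{theorem: orthogonal} are used. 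The orthogonality relation says the Hessian measure of $P(\f+tv)$ is carried by the contact set $\{P(\f+tv)=\f+tv\}$, so differentiating $E\circ P$ at $t=0$ along $v$ yields $\intX v\,(\omega+dd^c\f)^m\wedge\omega^{n-m}$ -- the content of the promised Lemma \ref{lem: BB08} -- and combining with the differentiability of the linear term $\f\mapsto\intX\f\,d\mu$ gives, at the maximizer, $(\omega+dd^c\f)^m\wedge\omega^{n-m}=\mu$ as measures. One should note here that $\mu$ automatically has the right total mass by hypothesis, so the Lagrange-multiplier normalization is consistent.

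I expect the main obstacle to be the compactness/coercivity step: proving that a $F_\mu$-maximizing sequence stays in a compact subset of $\Ect(X,\omega,m)$ and that $\f\mapsto\intX\f\,d\mu$ is continuous (or at least upper semicontinuous) along it. This requires the non-$m$-polar hypothesis on $\mu$ to be converted into a quantitative domination of the $\mu$-integral by the Hessian energy -- morally a ``generalized energy estimate'' in the sense of \cite{BBGZ13} -- and the needed ingredients (the fact that $\EcXo$ is the right finite-energy framework, quasi-continuity of $(\omega,m)$-sh functions with respect to the Hessian capacity $\Capm$, and that $m$-polar sets are exactly the sets of zero capacity / of $-\infty$ on some $\SHXo$ function) rely on the potential theory developed earlier in the paper. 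A secondary technical point is justifying that the limit $\f$ lies in $\EcXo$ (finite total Hessian mass) rather than merely in $\SHXo$; this follows from the energy bound once the relevant semicontinuity of the Hessian mass along decreasing/$L^1$ limits is in place. Once these are available the rest of the proof is a routine transcription of the $m=n$ argument.
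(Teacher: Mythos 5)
Your sketch of the variational core is accurate and matches the paper's route: the energy $E$, its concavity and cocycle identities (Lemma \ref{lem: primitive}), upper semicontinuity of $E$ and compactness of the sets $\EctC$, the differentiability of $E\circ P$ via the orthogonality relation (Lemma \ref{lem: BB08}), and the Euler--Lagrange identification of a maximizer of $\Fcm$ as a solution. This is exactly the content of Theorem \ref{thm: dirichlet principle}, which the paper proves under the hypothesis $\mu\leq A\,\Capm$.

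The genuine gap is in how you propose to pass from that special case to a general measure that merely does not charge $m$-polar sets. You suggest converting the non-$m$-polar hypothesis into ``a quantitative domination of the $\mu$-integral by the Hessian energy,'' i.e.\ into the statement $\Ect\subset L^1(\mu)$ together with coercivity of $\Fcm$. That implication is false: not charging $m$-polar sets is strictly weaker than integrating all finite-energy potentials, and indeed the solution in the general case lies only in $\EcXo$, not in $\Ect$, so it cannot arise as a maximizer of $\Fcm$ over $\Ect$ at all --- the functional $\f\mapsto\int_X\f\,d\mu$ may be $-\infty$ on part of $\Ect$ and fails to be upper semicontinuous along arbitrary $L^1$-convergent sequences. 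What the paper actually does (following \cite{BBGZ13} and Cegrell) is a two-stage reduction: using Rainwater's lemma one projects $\mu$ onto the compact convex set $\Mcc$ of probability measures dominated by $\Capm$, writing $\mu=f\nu+\sigma$ with $\nu\in\Mcc$ and $\sigma\perp\Mcc$; since $\sigma$ is carried by an $m$-polar set and $\mu$ charges none, $\sigma=0$. One then truncates, setting $\mu_j=c_j\min(f,j)\nu\leq jc_j\Capm$, solves each truncated equation by the variational theorem, and recovers $\f$ with $H_m(\f)\geq\mu$ (hence $=\mu$ by mass) by taking $\phi_j:=(\sup_{k\geq j}\f_k)^*$ and invoking the stability of the inequality $H_m(\cdot)\geq\min(f,j)\nu$ under maxima and decreasing limits (Proposition \ref{prop: maximum comparison} and Theorem \ref{thm: convergence E}). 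Without this decomposition-and-truncation step your argument only proves the theorem for the subclass of measures dominated by capacity (or at best those with $\Ect\subset L^1(\mu)$), not for all non-$m$-polar measures as the statement requires.
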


\medbreak

\noindent{\bf Acknowledgements.} We thank Robert Berman for many useful discussions. We are indebted to Eleonora Di Nezza and Ahmed Zeriahi for a very careful reading of a previous draft version of this paper.

\section{Preliminaries}
In this section we recall basic facts about $(\omega,m)$-subharmonic functions. We refer the readers to \cite{Bl05}, \cite{DK11}, \cite{DK12}, \cite{Lu13a}, \cite{Lu13b}, \cite{Cuong1}, \cite{Cuong2}, 
\cite{SA12}, \cite{Moh14} for more details. We always denote by $(X,\omega)$  a compact K\"{a}hler manifold. By $(M,\omega)$ we denote a 
K\"ahler manifold which is not necessary compact. Let $n$ be the complex dimension of the manifold  and fix an integer $m$ such that $1\leq m\leq n$. 
We denote $\Cc^{\uparrow}$ the space of upper semicontinuous functions.

\subsection{$m$-Subharmonic functions}
\begin{defi}

\label{def: m-sh smooth}
A real $(1,1)$-form $\alpha$ is called $m$-positive on $M$ if 
the following inequalities hold in the classical sense:
$$
\alpha^k \wedge \omega^{n-k} \geq 0, \ \forall k=1,..., m.
$$
A function $\f \in \Cc^2(M)$ is called $m$-subharmonic ($m$-sh for short) on 
$M$ if the $(1,1)$-form $dd^c \f$ is $m$-positive on $M$. \\
A current  $T$ of bidegree $(n-p,n-p)$, $p\leq m$, is called $m$-positive on
$M$ if for any $m$-positive
$(1,1)$-forms  $\alpha_1,...,\alpha_{p}$ the following inequality holds in the  sense of currents :
$$
\alpha_1\wedge \cdots \wedge \alpha_{p} \wedge T \geq 0 .
$$
\end{defi}
For each $k\geq 1$ the symmetric polynomial of degree $k$ on $\R^n$ is defined by 
$$
S_k(\lambda) : = \sum_{1\leq i_1<\cdots <i_k\leq n} \lambda_{i_1}\cdots \lambda_{i_k} ,\ \ \ \lambda:= (\lambda_1,\cdots,\lambda_n)\in \R^n .
$$
Let $\f\in \Cc^2(M)$ and set $\lambda:=\lambda_{\f}(x)\in \R^n$ the vector of eigenvalues of $dd^c \f$ at $x$ with respect to $\omega$. Then $\f$ is $m$-subharmonic in $\Omega$ if and only if 
$$
S_k(\lambda_{\f}(x)) \geq 0, \ \forall x\in M, \ \forall k = 1, ... , m. 
$$
The following lemma follows immediately from G{\aa}rding's inequality \cite{Ga59} (see also \cite{Bl05}).
\begin{lem}

\label{lem: Garding}
Let $u\in \Cc^2(M)$. Then $u$ is $m$-subharmonic in $M$
if and only if for every $m$-positive $(1,1)$-forms 
$(\alpha_1,...,\alpha_{m-1})$ on $M$ the following inequality holds in the classical sense:
$$
dd^c u  \wedge  \alpha_1 \wedge \cdots  \wedge \alpha_{m-1}
\wedge \omega^{n-m} \geq 0 .  
$$ 
\end{lem}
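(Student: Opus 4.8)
Since $u\in\Cc^2(M)$ all the forms involved are continuous and $m$-positivity is imposed pointwise, so the whole statement reduces to a linear-algebra assertion at each fixed $x\in M$, and the plan is to deduce it from G{\aa}rding's theory of hyperbolic polynomials. Fix $x$, choose a basis of $T^{1,0}_xM$ that is $\omega$-orthonormal and diagonalizes the Hermitian form $dd^c u(x)$; in the dual coframe everything looks like $\C^n$ with $\omega$ the standard form, and identifying a real $(1,1)$-form with its Hermitian coefficient matrix, let $A$ denote the (now diagonal) matrix of $dd^c u(x)$, with eigenvalue vector $\lambda_u(x)$. Recall the ingredients to be used: the polynomial $P(H):=S_m(\text{eigenvalues of }H)$ on the space of Hermitian $n\times n$ matrices (the sum of the principal $m\times m$ minors) is hyperbolic with respect to the identity $I$; its closed G{\aa}rding cone $\overline{\Lambda(P,I)}$ equals the cone of $m$-positive matrices $\{H : S_k(\text{eigenvalues of }H)\ge 0,\ k=1,\dots,m\}$, which is convex; and, writing $M_P$ for the symmetric multilinear polarization of $P$, one has $\beta_1\wedge\cdots\wedge\beta_m\wedge\omega^{n-m}=c_{n,m}\,M_P(H_1,\dots,H_m)\,\omega^n$ with $c_{n,m}>0$, for Hermitian matrices $H_1,\dots,H_m$ and associated forms $\beta_1,\dots,\beta_m$ (one checks this by setting all $H_i$ equal, where both sides reduce to $c_{n,m}S_m(\text{eigenvalues})\,\omega^n$).

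Granting this, the forward implication is immediate: if $u$ is $m$-sh then $A\in\overline{\Lambda(P,I)}$, and the matrix of each $\alpha_i$ lies there by hypothesis, so G{\aa}rding's inequality — $M_P\ge 0$ on $\overline{\Lambda(P,I)}^{\,m}$ — together with the displayed identity (with $H_m=A$) gives $dd^c u\wedge\alpha_1\wedge\cdots\wedge\alpha_{m-1}\wedge\omega^{n-m}\ge 0$ at $x$, hence as an inequality of forms on $M$.

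For the converse I would proceed as follows. Write $\Gamma_m:=\{\mu\in\R^n : S_k(\mu)>0,\ k=1,\dots,m\}$. Given $\mu\in\overline{\Gamma_m}$, the form $\gamma_\mu:=\rho\cdot\bigl(\text{the constant form }\mathrm{diag}(\mu)\text{ in the chart at }x\bigr)+(1-\rho)\,N\omega$, where $\rho$ is a cutoff equal to $1$ near $x$ and supported in a small coordinate ball and $N$ is large, is an $m$-positive form on all of $M$ — at each point it is a convex combination of the $m$-positive matrices $\mathrm{diag}(\mu)$ and $NI$, and $\overline{\Lambda(P,I)}$ is convex — and $\gamma_\mu(x)=\mathrm{diag}(\mu)$. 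Applying the hypothesis with $\alpha_1=\cdots=\alpha_{m-1}=\gamma_\mu$, and using that $A$ and $\gamma_\mu(x)$ are simultaneously diagonal so that $M_P$ restricts to the polarization $M_{S_m}$ of $S_m$ on $\R^n$, we get $M_{S_m}\bigl(\lambda_u(x);\mu,\dots,\mu\bigr)\ge 0$ for every $\mu\in\overline{\Gamma_m}$. This is finished by the classical self-duality of the G{\aa}rding cone, $\{\nu\in\R^n : M_{S_m}(\nu;\mu,\dots,\mu)\ge 0\text{ for all }\mu\in\overline{\Gamma_m}\}=\overline{\Gamma_m}$ (in \cite{Ga59}; see also \cite{Bl05}), which yields $\lambda_u(x)\in\overline{\Gamma_m}$, i.e. $dd^c u(x)$ is $m$-positive; since $x$ was arbitrary, $u$ is $m$-sh.

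I expect the main (really the only conceptual) obstacle to be the opening observation that $dd^c u$ and the $\alpha_i$ cannot in general be simultaneously diagonalized: this is what prevents a naive argument with eigenvalue vectors in $\R^n$ and forces one to treat $S_m$ as a hyperbolic polynomial on the space of Hermitian matrices and work with its polarization. Once G{\aa}rding's inequality is invoked for the forward implication and the self-duality of the closed G{\aa}rding cone for the converse, everything else — the frame choice, the cutoff construction, the identification of the wedge product with $M_P$ — is routine bookkeeping.
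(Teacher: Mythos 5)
Your proof is correct and follows exactly the route the paper intends: the paper gives no argument beyond citing G{\aa}rding's inequality \cite{Ga59} and \cite{Bl05}, and your write-up (G{\aa}rding's inequality for the polarized form on the closed cone for the forward implication, self-duality of the closed G{\aa}rding cone for the converse) is precisely that route spelled out. The only point worth tightening is that the constant-coefficient form $\mathrm{diag}(\mu)$ is $m$-positive relative to $\omega$ only at the point $x$ where the frame is orthonormal, so one should test with $\mu$ in the open cone $\Gamma_m$ on a sufficiently small ball (where $m$-positivity persists by continuity) and pass to $\overline{\Gamma_m}$ by continuity of $M_{S_m}$ at the end.
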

This lemma suggests a way to extend Definition \ref{def: m-sh smooth} for non-smooth functions. 

\begin{defi}
Assume that $u\in \Ccu(M)$ is locally integrable on $M$. Then $u$ is called $m$-sh on $M$ if 

(i) for any $m$-positive $(1,1)$-forms 
$\alpha_1,...,\alpha_{m-1}$ on $M$, 
the following inequality holds in the weak sense of currents
on $M$ :
$$
dd^c u \wedge  \alpha_1 \wedge \cdots  \wedge \alpha_{m-1}
\wedge \omega^{n-m} \geq 0 .
$$

(ii) if $v\in \Ccu(M)$ is locally integrable, satisfies (i) and 
$u=v$ almost everywhere on $M$ then $u\leq v$.
\end{defi}
We denote by $\SH_m(M)$ the class of all $m$-sh functions on $M$. If 
$M$ is compact this class contains only constant functions. We will study instead the class of $(\omega,m)$-subharmonic functions (see the next section).

\begin{defi}
A function $u$ is called strictly $m$-sh on $M$ if for every 
function $\chi\in \Cc^2(M)$ there exists $\vep>0$ such that
$u+\vep \chi$ is $m$-sh on $M$.
\end{defi}

When $M=\Omega$ a bounded $m$-hyperconvex domain in $\C^n$, we recover the definition of $m$-subharmonic functions introduced in \cite{Bl05},
\cite{SA12}, \cite{Cuong1}, \cite{Cuong2},\cite{Lu13c}.

\subsection{$(\omega,m)$-subharmonic functions}
\begin{defi}
Let $u\in \Ccu(X)$ be an integrable function. Then $u$ is called 
$(\omega,m)$-subharmonic on $X$ if 
 for every local chart $\Omega$ the function $u+\rho$ is $m$-sh in $\Omega$, where $\rho$ is a local potential of $\omega$. Here, we regard $(\Omega,\omega\vert_{\Omega})$
as an open K\"ahler manifold. The notion of $m$-sh functions on $\Omega$ is defined in the previous subsection.  
\end{defi}

When $m=1$, $(\omega,1)$-sh functions on $X$ are just $\omega$-subharmonic functions on $X$. When $m=n$, $(\omega,m)$-sh functions are exactly $\omega$-plurisubharmonic functions which have been studied intensively by many authors in the recent years.

It follows from Lemma \ref{lem: Garding} that a function $u\in \Cc^2(X)$ is $(\omega,m)$-sh if and only if the associated $(1,1)$-form $\omega + dd^c \f$ is $m$-positive on $X$. In general $u$ is 
$(\omega,m)$-sh if the current $\omega + dd^c \f$ is $m$-positive on 
$X$. 

\begin{defi}
A function $u$ is called strictly $(\omega,m)$-sh on $X$ if
for every function $\chi\in \Cc^2(X)$ there exists $\vep >0$ such that 
$u+\vep \chi $ is $(\omega,m)$-sh on $X$.
\end{defi}

Continuous strictly $(\omega,m)$-sh functions on $X$ can also be approximated from above by smooth ones. This is the content of the next theorem due to Plis \cite{Pl13}:

\begin{thm}\cite{Pl13}
\label{thm: Plis reg compact}
Let $(X,\omega)$ be a compact K\"ahler manifold and $u$ be a continuous strictly $(\omega,m)$-subharmonic function on
$X$. Let $h$ be a continuous  function on $X$ such that $h>0$. Then there exists a smooth strictly $(\omega,m)$-sh function  $\f$ on $X$
such that 
$$
u \leq \f \leq  u + h .
$$ 
\end{thm}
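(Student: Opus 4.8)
I would run a Richberg-type regularization: smooth $u$ in coordinate charts by convolution, then glue the local pieces by a regularized maximum. The hypothesis that $u$ is \emph{strictly} $(\omega,m)$-subharmonic is used twice --- to produce a uniform quantitative margin of $m$-positivity that absorbs the errors of the approximation, and to guarantee that the glued function is again $(\omega,m)$-subharmonic.

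\emph{Step 1: a uniform margin.} Cover $X$ by finitely many coordinate balls $B_1,\dots,B_N$ with $\omega=dd^c\rho_j$ on a neighbourhood of $\overline{B_j}$, $\rho_j$ smooth, the balls small enough that $\omega$ is $\Cc^0$-close on $B_j$ to its constant value at the centre. Testing the definition of strict $(\omega,m)$-subharmonicity in each chart against a cut-off of $-|z|^2$ and using compactness, one obtains $\delta>0$ with $(1-2\delta)\omega+dd^cu$ $m$-positive on $X$ in the weak sense; that is, $u$ is $((1-2\delta)\omega,m)$-subharmonic. Put $h_0:=\min_X h>0$.

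\emph{Step 2: local mollification.} On each $B_j$ the function $v_j:=u+\rho_j$ is continuous and $m$-subharmonic for the metric $\omega$, with the margin of Step 1. Let $v_{j,\vep}:=v_j\ast\chi_\vep$ be its mollification with a radial mollifier, on a slightly smaller ball $B_j'\Subset B_j$. An $m$-subharmonic function is subharmonic, so $v_{j,\vep}\downarrow v_j$ as $\vep\downarrow0$ and, by uniform continuity of $u$, $u\le v_{j,\vep}-\rho_j\le u+\eta_j(\vep)$ with $\eta_j(\vep)\to0$. The subtle point is that $v_j$ is $m$-subharmonic relative to the \emph{variable} form $\omega|_{B_j}$, so its convolution --- an average of forms that are $m$-positive for $\omega$ at neighbouring points --- is not automatically $m$-positive for $\omega$ at the base point; this discrepancy must be controlled against the margin $\delta$, exploiting the $\Cc^0$-closeness of $\omega$ to a constant form and the Maclaurin/G{\aa}rding inequalities for $m$-positive forms (Lemma~\ref{lem: Garding}). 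This is the first genuine obstacle, and the passage from a variable metric to a constant one under regularization is exactly the point requiring real work (carried out in \cite{Pl13}). Granting it, for suitable $\vep_j$ the function $u_j:=v_{j,\vep_j}-\rho_j$ is smooth on $B_j'$, is $((1-\delta)\omega,m)$-subharmonic there, satisfies $u\le u_j\le u+h_0/4$, and $\{B_j'\}$ still covers $X$.

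\emph{Step 3: gluing.} The pointwise maximum of $(\omega,m)$-subharmonic functions is $(\omega,m)$-subharmonic, and the regularized maximum $M_\eta$ --- smooth, convex, non-decreasing, with $\sum_l\partial_lM_\eta\equiv1$, equal to $\max$ off an $\eta$-neighbourhood of the diagonal and satisfying $\max\le M_\eta\le\max+\eta$ --- preserves this: writing $\omega':=(1-\delta)\omega$,
$$\omega'+dd^cM_\eta(u_{i_1},\dots,u_{i_k})=\sum_{l}(\partial_lM_\eta)\bigl(\omega'+dd^cu_{i_l}\bigr)+\sum_{l,l'}(\partial_l\partial_{l'}M_\eta)\,du_{i_l}\wedge d^cu_{i_{l'}},$$
which is $m$-positive, being a convex combination of $m$-positive forms plus a positive $(1,1)$-form --- and the sum of an $m$-positive form and a positive form is $m$-positive by G{\aa}rding's inequality. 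Since $\omega'\le\omega$, such a maximum is in fact strictly $(\omega,m)$-subharmonic. It remains to arrange that near $\partial B_j'$ the function $u_j$ falls strictly below the maximum of those $u_i$ whose patches cover a neighbourhood of that boundary; then the $u_j$ can be discarded patch by patch and the $M_\eta$-maxima assemble into a single globally smooth, strictly $(\omega,m)$-subharmonic $\f$. Following Richberg's bookkeeping, the required separation is created by subtracting from each $u_j$ a function $\phi_j\ge0$ that vanishes on an interior region of $B_j'$ and grows towards $\partial B_j'$; since all the $u_j$ are uniformly $h_0/4$-close to $u$, the resulting $\f$ stays within $[u,u+h]$ (the places where $u_j-\phi_j$ dips below $u$ are precisely those where $u_j$ is discarded), while $\phi_j$ and the regularization parameters are taken small enough to preserve strict $(\omega,m)$-subharmonicity. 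The second genuine obstacle is this bookkeeping: making the separation compatible with the margin $\delta$ when the patches overlap in complicated ways.
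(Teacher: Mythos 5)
First, a point of comparison: the paper does not prove this statement at all --- Theorem \ref{thm: Plis reg compact} is imported verbatim from \cite{Pl13}, and the authors stress that their own approximation result (Theorem \ref{theorem: smooth approx}, obtained via the envelopes and the $\beta$-convergence of Theorem \ref{thm: upper bound}) is logically independent of it. So your proposal cannot be measured against an in-paper argument; it has to stand on its own as a reconstruction of Plis's proof.

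As such it has a genuine gap, and you have located it yourself: the entire content of the theorem sits inside the sentence of Step 2 that begins ``Granting it''. The margin extraction of Step 1 and the Richberg gluing of Step 3 are standard and correct (the cone of $m$-positive forms at a point is convex and contains the positive $(1,1)$-forms, so convex combinations and the regularized maximum behave as you say). But the mechanism you suggest for Step 2 --- absorb, via the $\Cc^0$-closeness of $\omega$ to a constant-coefficient form, the failure of $m$-positivity of the convolved current into the fixed margin $\delta\omega$ --- does not close as stated. The obstruction is that $dd^c v_j$ is only a current with locally unbounded coefficients: if a form $\alpha$ lies in the $m$-positive cone of the metric at a nearby point, its membership in the cone of the metric at the centre, even relaxed by $C\vep\,\omega$, is not implied, because an $\vep$-perturbation of the metric tilts an \emph{unbounded} cone and displaces points far from the vertex by an amount proportional to $|\alpha|$ rather than to $\vep$ alone. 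Already for $m=1$ a near-cancellation $\mathrm{tr}_{\omega_x}\alpha\geq 0$ between huge eigenvalues of opposite sign can become arbitrarily negative for $\mathrm{tr}_{\omega_{x_0}}$. A fixed additive margin therefore cannot uniformly absorb the error over the mollification; this is precisely why the local smoothing in \cite{Pl13} requires a genuinely different device than convolution-plus-margin, and why the present paper routes around the issue entirely. Until Step 2 is actually proved, your argument reduces the global statement to the local smoothing statement that is the heart of \cite{Pl13}, and so does not constitute an independent proof.
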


\subsection{The complex Hessian operator}

\label{sect: Hessian operator}
We briefly recall basic facts about the class $\PmXo$ and the complex Hessian operator introduced in \cite{Lu13a}. 

Let $U\subset X$ be an open subset. The Hessian $m$-capacity of $U$ is defined by
$$
\Capm(U) := \sup \left\{ \int_U H_m(u) \setdef u\in \SHXo \cap \Cc^2(X), \ -1\leq u\leq 0  \right\},
$$
where for a smooth function $u$, we denote $H_m(u):=(\omega+dd^c u)\wedge \omega^{n-m}$.
\begin{defi}
Let $\f\in \SHXo$. By definition $\f$ belongs to $\PmXo$ if there exists a sequence $(\f_j) \subset \SHXo \cap \Cc^2(X)$ which converges 
to $\f$ quasi-uniformly on $X$, i.e. for any $\vep>0$ there exists 
an open subset $U$ such that $\Capm(U)<\vep$ and $\f_j$ converges uniformly to $\f$ on $X\setminus U$. 
\end{defi}
We will show in the next section that  $\PmXo = \SHXo$. It follows from the definition that every $\f\in \PmXo$ is quasi-continuous, i.e.
for any $\vep>0$ we can find an open subset $U$ such that 
$\Capm(U)<\vep$ and the restriction of $\f$ on $X\setminus U$
is continuous.

Assume that $\f\in \PmXo$ is bounded. Let $(\f_j)$ be a sequence of 
functions in $\SHXo\cap \Cc^2(X)$ which converges quasi-everywhere on 
$X$ to $\f$. Then the sequence  $H_m(\f_j)$
converges weakly 
to some positive Radon measure $\mu$. This measure $\mu$ does not depend on the choice of the sequence $(\f_j)$ and is defined to be the
Hessian measure of $\f$:
$$
(\omega+dd^c \f_j)^m\wedge \omega^{n-m} =: H_m(\f_j) \weak H_m(\f).
$$
The class $\PmXo$ is stable under taking maximum and under decreasing sequence. 

\subsection{Viscosity vs potential sub-solution}
Let $0\leq F$ be a continuous function on $X$ and $u$ be an upper semicontinuous function on $X$. Let $x_0\in X$ and $q$ be a $\Cc^2$ function in a small neighborhood $V$ of $x_0$. We say that $q$ touches 
$u$ from above (in $V$) at $x_0$ if $q\geq u$ in $V$ with equality at $x_0$.

We say that $u$ is a viscosity sub-solution of equation
\begin{equation}
\label{eq: vis sub}
F\omega^n -(\omega +dd^c \f)^m \wedge \omega^{n-m} =0
\end{equation}
if for any $x_0\in X$ and any $\Cc^2$ function $q$ in a neighborhood of $x_0$ which touches $u$ from above at $x_0$ then the following inequality holds at $x_0$
\begin{equation*}
F\omega^n -(\omega +dd^c q)^m \wedge \omega^{n-m} \leq 0.
\end{equation*}

The following result has been proved by Plis (\cite{Pl13}) using \cite{HL13}. This will play an important role in our regularization theorem.
\begin{lem}\cite{Pl13}
\label{lem: vis pot 0}
Let $u$ be a $(\omega,m)$-subharmonic function on $X$. Then $u$ is a 
viscosity sub-solution of (\ref{eq: vis sub}) with $F\equiv 0$. More precisely, for any $x_0\in X$ and any $\Cc^2$ function $q$ in a neighborhood of $x_0$ which touches $u$ from above at $x_0$ then the following inequalities hold at $x_0$:
$$
(\omega +dd^c q)^k\wedge \omega^{n-k} \geq 0 , \ \forall k=1,...,m.
$$
\end{lem}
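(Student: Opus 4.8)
The plan is to argue locally and viscosity‑theoretically. Since the assertion is purely local, I would fix $x_0$, choose a small coordinate ball $B$ around $x_0$ together with a smooth local potential $\rho$ of $\omega$ on $B$ (so that $dd^c\rho=\omega$), and set $v:=u+\rho$ and $\hat q:=q+\rho$. Then $v$ is $m$-subharmonic on $B$, $\hat q\in\Cc^2(B)$, and, since $\hat q-v=q-u$, the function $\hat q$ touches $v$ from above at $x_0$. Because $dd^c\hat q(x_0)=(\omega+dd^cq)(x_0)$, the desired inequalities $(\omega+dd^cq)^k\wedge\omega^{n-k}\geq0$ at $x_0$ for $k=1,\dots,m$ amount exactly to saying that the $(1,1)$-form $dd^c\hat q(x_0)$ is $m$-positive; by G{\aa}rding's inequality (the pointwise content of Lemma \ref{lem: Garding}) this is equivalent to
$$
dd^c\hat q(x_0)\wedge\alpha_1\wedge\cdots\wedge\alpha_{m-1}\wedge\omega^{n-m}\geq0
$$
for every $(m-1)$-tuple of $m$-positive constant $(1,1)$-forms $\alpha_1,\dots,\alpha_{m-1}$, and by multilinearity together with a limiting argument it suffices to prove this when the $\alpha_j$ lie in the \emph{interior} of the G{\aa}rding cone.

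I would then fix such an interior tuple and introduce the linear second-order operator
$$
L(w):=\frac{dd^cw\wedge\alpha_1\wedge\cdots\wedge\alpha_{m-1}\wedge\omega^{n-m}}{\omega^n}.
$$
Because the $\alpha_j$ are interior, the $(n-1,n-1)$-form $\alpha_1\wedge\cdots\wedge\alpha_{m-1}\wedge\omega^{n-m}$ is strictly positive with continuous coefficients on a (possibly smaller) ball $B'\ni x_0$, so $L$ is uniformly elliptic with continuous coefficients on $B'$; after shrinking $B'$ the $\alpha_j$ also remain $m$-positive with respect to $\omega(z)$ for every $z\in B'$, and hence condition (i) in the definition of $m$-subharmonicity, applied with these forms, says precisely that $L(v)\geq0$ in the distributional sense on $B'$. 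The key analytic input is now the equivalence, for uniformly elliptic linear operators with continuous coefficients, between distributional and viscosity subsolutions (this is where \cite{HL13} enters, in the form adapted to these possibly degenerate operators): it yields that $v$ is a \emph{viscosity} subsolution of $L(w)=0$ on $B'$. Testing with $\hat q\in\Cc^2$, which touches $v$ from above at $x_0$, the very definition of viscosity subsolution gives $L(\hat q)(x_0)\geq0$, that is, $dd^c\hat q(x_0)\wedge\alpha_1\wedge\cdots\wedge\alpha_{m-1}\wedge\omega^{n-m}\geq0$. Letting the $\alpha_j$ range over all interior tuples and passing to the closure of the G{\aa}rding cone then finishes the proof via Lemma \ref{lem: Garding}.

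I expect the only non-formal step to be the passage from ``$L(v)\geq0$ distributionally'' to ``$v$ is a viscosity subsolution of $L(w)=0$''; the rest is bookkeeping with the G{\aa}rding cone and the local potential. If one prefers not to invoke \cite{HL13}, the same point can be reached by a more hands-on barrier argument relying on the same circle of ideas: if $dd^c\hat q(x_0)$ were not $m$-positive, one picks an interior tuple with $L(\hat q)(x_0)<0$, subtracts a small multiple of $|z-x_0|^2-r^2$ from $\hat q$ to get a $\Cc^2$ function $Q$ with $L(Q)<0$ on a small ball $B_r$, with $Q\geq v$ on $\{|z-x_0|=r\}$ and $Q(x_0)<v(x_0)$; then $v-Q$ is an upper semicontinuous distributional subsolution of $L(w)=c$ for some $c>0$ which nevertheless attains an interior maximum, contradicting the weak maximum principle for such $L$ (available since $m$-subharmonic functions are in particular $\omega$-subharmonic and hence enjoy sub-mean-value inequalities). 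Either way, the analytic heart of the matter is the maximum principle / weak--strong equivalence for the family of uniformly elliptic linearisations $L$.
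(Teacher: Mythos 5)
The paper does not actually prove this lemma: it is quoted verbatim from \cite{Pl13}, with the remark that the proof there rests on the Harvey--Lawson equivalence of distributional and viscosity subsolutions \cite{HL13}. Your reconstruction follows exactly that route --- localize with a potential $\rho$, use the pointwise G{\aa}rding duality of Lemma \ref{lem: Garding} to reduce the $m$-positivity of $dd^c\hat q(x_0)$ to a family of linear inequalities indexed by interior tuples of the G{\aa}rding cone, observe that condition (i) in the definition of $m$-subharmonicity says $v$ is a distributional subsolution of each resulting linear operator $L$, and then invoke the distributional-to-viscosity equivalence for linear uniformly elliptic operators with continuous coefficients. This is precisely the ``strong Bellman principle'' mechanism of \cite{HL13}, so your argument is the intended one, and you correctly isolate the one non-formal ingredient. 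Two small points deserve attention. First, the uniform ellipticity of $L$, i.e.\ the strict positivity of $\alpha_1\wedge\cdots\wedge\alpha_{m-1}\wedge\omega^{n-m}$ for interior tuples, is true but is itself a (standard) consequence of G{\aa}rding's theory --- it is the ellipticity of $S_m$ on the open cone $\Gamma_m$ --- and should be stated as such rather than taken as obvious; relatedly, the passage from distributional to viscosity subsolution uses that $u$ is the canonical upper semicontinuous representative of its $L^1_{loc}$ class, which is guaranteed by condition (ii) of the definition. Second, your alternative barrier sketch has a sign error: \emph{subtracting} $\delta\bigl(|z-x_0|^2-r^2\bigr)$ from $\hat q$ gives $Q(x_0)=v(x_0)+\delta r^2>v(x_0)$, destroying the contradiction; you must \emph{add} that quantity (so $Q=\hat q$ on the sphere and $Q(x_0)=v(x_0)-\delta r^2$), taking $\delta$ small enough that $L(Q)<0$ survives the perturbation since $L\bigl(|z-x_0|^2\bigr)>0$. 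With these repairs both variants are sound, and the main line of your proof stands as written.
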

We also need a generalized version of the above result. The idea is to adapt some useful tricks in \cite{HL13}.  
\begin{lem}
\label{lem: vis pot gen}
Let $F$ be a non-negative continuous function on $X$. Let $u\in \PmXo \cap \Cc(X)$ be a potential solution of equation (\ref{eq: vis sub}). Then $u$ is also a viscosity sub-solution of (\ref{eq: vis sub}). \end{lem}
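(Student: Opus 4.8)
\emph{Proof idea.} The plan is to argue by contradiction and reduce the statement to the local comparison principle for $m$-subharmonic functions. Suppose $u$ is not a viscosity sub-solution: there are a point $x_0\in X$ and a $\Cc^2$ function $q$ defined near $x_0$, touching $u$ from above at $x_0$, such that
\[
(\omega+dd^c q)^m\wedge\omega^{n-m}<F\,\omega^n\qquad\text{at }x_0 .
\]
The first step is to normalise $q$. Fix holomorphic coordinates $z=(z_1,\dots,z_n)$ centred at $x_0$ and, for a small $\vep>0$, replace $q$ by $q+\vep|z|^2$. The new $q$ still touches $u$ from above at $x_0$, but now strictly: $q-u\geq\vep|z|^2>0$ away from $x_0$. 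Moreover, by Lemma \ref{lem: vis pot 0} applied to the original $q$ we have $(\omega+dd^c q)^k\wedge\omega^{n-k}\geq 0$ at $x_0$ for $k=1,\dots,m$; since adding a positive definite form turns these inequalities into strict ones (an elementary property of the elementary symmetric functions on the G{\aa}rding cone: $S_k(\lambda+t\mathbf{1})>0$ for every $t>0$ whenever $S_1(\lambda),\dots,S_k(\lambda)\geq 0$), the perturbed $q$ satisfies $(\omega+dd^c q)^k\wedge\omega^{n-k}>0$ at $x_0$ for all $k\leq m$, hence $\omega+dd^c q$ is $m$-positive on a ball $B=B(x_0,r)$. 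Shrinking $\vep$ and $r$, continuity of $F$ and of the density of $(\omega+dd^c q)^m\wedge\omega^{n-m}$ preserves the strict inequality $(\omega+dd^c q)^m\wedge\omega^{n-m}<F\,\omega^n$ on all of $B$.

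Next I would pass to local potentials. On $B$ write $\omega=dd^c\rho$ with $\rho$ smooth and set $v:=u+\rho$ and $\hat q:=q+\rho$, both $m$-subharmonic on $B$ --- the first continuous, the second smooth. By the very definition of $H_m$ and the locality of the complex Hessian operator recalled in Section \ref{sect: Hessian operator}, $(dd^c v)^m\wedge\omega^{n-m}=H_m(u)|_{B}=F\,\omega^n$, whereas $(dd^c\hat q)^m\wedge\omega^{n-m}=(\omega+dd^c q)^m\wedge\omega^{n-m}<F\,\omega^n$ on $B$. Fix $0<r'<r$ and set $c:=\min_{\partial B(x_0,r')}(q-u)$; strict touching gives $c\geq\vep\,(r')^2>0$. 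On $\partial B(x_0,r')$ one has $\hat q-c-v=(q-u)-c\geq 0$, while on $B(x_0,r')$ the Hessian measure of $\hat q-c$ equals that of $\hat q$, hence is $\leq(dd^c v)^m\wedge\omega^{n-m}$. The comparison principle for $m$-subharmonic functions (see the references of Section \ref{sect: Hessian operator}, e.g.\ \cite{Bl05}) then forces $\hat q-c\geq v$ on all of $B(x_0,r')$. Evaluating at $x_0$ yields $0=(q-u)(x_0)=(\hat q-v)(x_0)\geq c>0$, a contradiction. Hence $(\omega+dd^c q)^m\wedge\omega^{n-m}\geq F\,\omega^n$ at $x_0$ for every admissible test function, i.e.\ $u$ is a viscosity sub-solution of (\ref{eq: vis sub}).

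The step I expect to be the main obstacle is the normalisation, not the comparison argument: one must make sure that after the perturbation $\omega+dd^c q$ is $m$-positive on a whole neighbourhood of $x_0$, not merely at the contact point, for only then is $\hat q$ an admissible competitor in the comparison principle. This is exactly where Lemma \ref{lem: vis pot 0} --- proved via the viscosity techniques of \cite{HL13} --- enters, together with the elementary positivity of $S_k$ along the ray $\lambda+t\mathbf{1}$. The remaining ingredients, the locality of the complex Hessian operator and the domain comparison principle, are by now standard and already recalled (or cited) in Section \ref{sect: Hessian operator}.
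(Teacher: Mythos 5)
Your argument is correct in outline, but it takes a genuinely different route from the paper. The paper never perturbs the test function: it approximates the potential solution $u$ globally by smooth $(\omega,m)$-sh solutions $u_\delta$ of nearby Hessian equations (using the solvability and $\Cc^0$-stability of \cite{DK11,DK12}), penalizes with $\delta|x-x_0|^2$, and applies the classical pointwise maximum principle at an interior maximum of $u_\delta-q-\delta|x-x_0|^2$; no local pluripotential theory enters at all. You instead upgrade $q$ to an admissible competitor --- using Lemma \ref{lem: vis pot 0} together with the convexity of the G{\aa}rding cone to make $\omega+dd^c(q+\vep|z|^2)$ $m$-positive on a ball --- and then invoke the local comparison principle, which is essentially the argument of \cite{EGZ11} for the Monge--Amp\`ere case. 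Your version is shorter and avoids the heavy a priori estimates of Dinew--Ko{\l}odziej, but it rests on two pieces of local machinery that the paper deliberately does not set up: the comparison principle for $m$-sh functions relative to a general (non-flat) K\"ahler metric on a coordinate ball, and the identification of the globally defined $H_m(u)$ restricted to $B$ with the local Bedford--Taylor product $(dd^c(u+\rho))^m\wedge\omega^{n-m}$. The cited references (\cite{Bl05} etc.) prove the local comparison principle only for the standard flat metric, and the paper itself remarks at the end of Section 4 that the local theory for a general K\"ahler metric ``can be done'' but has not been written down; both facts are true (the Bedford--Taylor integration by parts goes through since $d\omega=0$), so this is a gap in sourcing rather than in substance, but you should either supply these two lemmas or note explicitly that you are assuming them.
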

\begin{proof}
We argue by contradiction. Assume that there exists $x_0\in X$,  $B=\bar{B}(x_0,r)$ a small open ball centered at $x_0$ and $q\in \Cc^2(B)$ such that 
$q$ touches $u$ from above in $B$ at $x_0$ but 
\begin{equation}
\label{eq: vis pot sub 1}
F\omega^n -(\omega +dd^c q)^m \wedge \omega^{n-m} > \vep ,
\end{equation}
at $x_0$ for some positive constant $\vep>0$. Since $F$ is continuous, by shrinking $B$ a little bit we can assume that (\ref{eq: vis pot sub 1}) holds for every point in $B$. 

Fix $\delta>0$. It follows from 
\cite{DK11}, \cite{DK12}  that we can find $u_{\delta}\in \SHXo\cap \Cc^{\infty}(X)$ such that
$$
\sup_X |u_{\delta}-u| < \delta r^2/2 \ , \sup_X |F_{\delta}-F|<\delta/2\ ,  \ {\rm and}\  (\omega +dd^c u_{\delta})^m \wedge \omega^{n-m} = 
F_{\delta} \omega^n.
$$
Consider the function 
$$
\phi_{\delta}(x) := u_{\delta} (x) -q(x) -\delta |x-x_0|^2 , \ \  x\in B=\bar{B}(x_0,r).
$$
Let $x_{\delta}\in B$ be a maximum point of $\phi_{\delta}$ in $B$. Observe that if $x\in \partial B$ we have
$$
\phi_{\delta}(x) < u(x) + \delta r^2/2 -q(x) -\delta r^2 \leq -\delta r^2/2,
$$
while $\phi_{\delta}(x_0) > u(x_0) -\delta r^2/2 -q(x_0) = -\delta r^2/2$. Thus $x_{\delta}$ is in the interior of $B$ and hence the maximum principle yields that 
$$
(\omega +dd^c (q + \delta |x-x_0|^2))^m \wedge \omega^{n-m} \geq F_{\delta}\omega^n
$$
holds at $x_{\delta}$. Letting $\delta\downarrow 0$ we can find $\bar{x}\in B$ such that the following holds at $\bar{x}$
$$
(\omega +dd^c q )^m \wedge \omega^{n-m} \geq F\omega^n.
$$
This yields a contradiction since (\ref{eq: vis pot sub 1}) holds in $B$. 

\end{proof}

\section{Approximation of  $(\omega,m)$-subharmonic functions}
In this section we prove the approximation theorem. Recall that it follows from the recent work of Plis \cite{Pl13} (Theorem \ref{thm: Plis reg compact})
that any continuous $(\omega,m)$-sh function can be uniformly approximated by smooth ones. Thus one needs only to approximate any $(\omega,m)$-sh function by continuous ones. Let us stress that  our proof is independent of  Plis's result. We immediately regularize 
$(\omega,m)$-sh functions by using recent methods of Berman \cite{Berman13} and Eyssidieux-Guedj-Zeriahi \cite{EGZ13}.

We first define the projection of any function on the class of  $(\omega,m)$-sh functions. Let $f$ be any upper semicontinuous function such that there is a $(\omega,m)$-sh function lying below $f$. We define
$$
P(f):= \sup\left\{ v\in \SHXo \setdef v\leq f \ {\rm on }\ X 
\right\}.
$$
It is clear that $P(f)^*$ is again a candidate defining 
$P(f)$. Then $P(f)^*\leq P(f)$ which implies that $P(f)=P(f)^*$
is a $(\omega,m)$-sh function. This is the maximal $(\omega,m)$-sh function lying below $f$. The following observation follows straightforward from the definition:

\begin{lem}\label{lem: proj stable}
Let $f, g$ be two bounded upper semicontinuous functions on $X$. Then 
$$
\sup_X |P(f)-P(g)| \leq \sup_X |f-g| .
$$
\end{lem}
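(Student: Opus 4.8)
The plan is to exploit the elementary fact that the projection $P$ is order preserving and commutes with the addition of constants, so that it is automatically $1$-Lipschitz for the sup norm. Set $M := \sup_X |f-g|$, which is finite because $f$ and $g$ are bounded. Note first that the competitor families are nonempty: any constant below $\inf_X f$ (resp. $\inf_X g$) is $(\omega,m)$-sh and lies below $f$ (resp. $g$), so $P(f)$ and $P(g)$ are well-defined $(\omega,m)$-sh functions by the discussion preceding the statement.

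The key step is the following comparison. Fix any $v\in \SHXo$ with $v\leq g$ on $X$. Since $g\leq f+M$ on $X$, we get $v-M\leq f$ on $X$, and $v-M$ is again $(\omega,m)$-sh because $\omega+dd^c(v-M)=\omega+dd^c v$ is $m$-positive. Hence $v-M$ is a competitor in the supremum defining $P(f)$, so $v-M\leq P(f)$, i.e. $v\leq P(f)+M$. Taking the supremum over all such $v$ (and noting that the upper semicontinuous regularization of the resulting function still does not exceed the $(\omega,m)$-sh function $P(f)+M$) yields $P(g)\leq P(f)+M$.

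Exchanging the roles of $f$ and $g$ gives $P(f)\leq P(g)+M$ by the same argument, and combining the two inequalities gives $\sup_X|P(f)-P(g)|\leq M$, which is the claim. There is no genuine obstacle here; the only points worth noting are the nonemptiness of the competitor families, which is immediate from boundedness, and the stability of $\SHXo$ under adding constants, which is built into the definition through $dd^c$.
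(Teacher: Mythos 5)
Your proof is correct and is precisely the ``straightforward from the definition'' argument the paper has in mind (the paper states the lemma without written proof): translate a competitor for $P(g)$ down by $\sup_X|f-g|$ to obtain a competitor for $P(f)$, and symmetrize. Nothing further is needed.
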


Let $f$ be any function of class $\Cc^2$ on $X$. We define $H_m(f)^+$ 
to be the non-negative part of $H_m(f)$, i.e.
$$
H_m(f)^+(z) := \max \left[(\omega+dd^cf)^m\wedge \omega^{n-m} (z) , 0\right] .
$$
Observe that for any $f\in \Cc^2(X)$, the non-negative part of the Hessian measure of $f$ is a non-negative measure having  continuous density. This measure also has positive mass. It follows from \cite{Lu13a} that for every 
$\beta>0$ the following equation has a unique continuous solution in the class $\PmXo$:
\begin{equation}
\label{eq: beta 1}
(\omega +dd^c \f)^m\wedge \omega^{n-m} = e^{\beta(\f-f)} 
\left(H_m(f)^+  + \frac{\omega^n}{\beta} \right) .
\end{equation}

\begin{thm}
\label{thm: upper bound}
Let $f\in \Cc^2(X)$. For each $\beta>1$, let $u_{\beta}\in \PmXo$ be the unique solution to equation (\ref{eq: beta 1}). Then $u_{\beta}\leq f$ on $X$. Moreover, when $\beta$ goes to $+\infty$, $u_{\beta}$ converges uniformly on $X$ to the upper envelope 
$$
P(f):= \sup\left\{ v\in \SHXo \setdef v\leq f \ {\rm on }\ X 
\right\} .
$$ 
In particular $P(f)$ belongs to $\PmXo \cap \Cc(X)$ and satisfies 
$$
H_m(P(f)) \leq  \ind_{\{P(f)=f\}} H_m(f).
$$ 
\end{thm}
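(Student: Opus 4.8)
The plan is to run Berman's "$\beta$-convergence" scheme adapted to the Hessian setting. First I would establish the upper bound $u_\beta \le f$. Since $u_\beta$ is continuous and $(\omega,m)$-sh, consider the point where $u_\beta - f$ attains its maximum. At that point $\omega + dd^c u_\beta \le \omega + dd^c f$ in the sense of $(1,1)$-forms (comparing Hessians at a maximum of the difference), hence, using that both forms would be $m$-positive there and that $H_m$ is monotone along $m$-positive directions (Gårding / Lemma \ref{lem: Garding}), one gets $H_m(u_\beta) \le H_m(f) \le H_m(f)^+$ at that point, as measures with continuous density. But the equation (\ref{eq: beta 1}) gives $H_m(u_\beta) = e^{\beta(u_\beta - f)}(H_m(f)^+ + \omega^n/\beta)$ there, which is strictly larger than $H_m(f)^+$ unless $e^{\beta(u_\beta-f)} \le 1$, i.e. $u_\beta \le f$ at the maximum point, hence everywhere. (One has to be slightly careful because $u_\beta$ is only continuous, not $\Cc^2$; I would instead use the viscosity sub-solution property of $f$ or run the maximum-principle argument after a smoothing, or test against a smooth $q$ touching $u_\beta$ from above — but the idea is the same.)

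Next I would show $u_\beta$ increases (or at least that $\limsup_\beta u_\beta \le f$ automatically and $\liminf$ dominates $P(f)$) and converges uniformly to $P(f)$. Since $u_\beta \le f$ for all $\beta > 1$ and $u_\beta \in \SHXo$, by definition of the envelope we get $u_\beta \le P(f)$. For the reverse inequality, I would show that for any fixed candidate $v \in \SHXo$ with $v \le f$, and any $\varepsilon > 0$, one has $u_\beta \ge v - \varepsilon$ for $\beta$ large: apply the comparison principle to $v - \varepsilon$ against $u_\beta$, using that on the set $\{u_\beta < v - \varepsilon\}$ the right-hand side $e^{\beta(u_\beta - f)}(H_m(f)^+ + \omega^n/\beta)$ is very small (because $u_\beta - f \le u_\beta - v < -\varepsilon$ there, so the exponential factor is $\le e^{-\beta\varepsilon}$) while $H_m(v)$ has total mass $\int_X \omega^n$; a standard comparison-principle / domination-of-mass argument then forces $\{u_\beta < v - \varepsilon\}$ to shrink and, combined with the continuity of $u_\beta$ and a uniform estimate $\sup_X |u_\beta| \le C$ (from the $\Cc^0$ theory in \cite{DK11}, \cite{Lu13a}), yields $u_\beta \ge v - \varepsilon$ uniformly. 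Taking the supremum over $v$ gives $u_\beta \to P(f)$ uniformly; in particular $P(f)$ is a uniform limit of continuous $(\omega,m)$-sh functions, hence $P(f) \in \PmXo \cap \Cc(X)$.

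Finally, for the orthogonality-type inequality $H_m(P(f)) \le \ind_{\{P(f)=f\}} H_m(f)$: by continuity of the Hessian operator along the uniform (hence quasi-everywhere) convergence $u_\beta \to P(f)$, we have $H_m(u_\beta) \weak H_m(P(f))$. On the open set $\{P(f) < f\}$, uniform convergence gives $u_\beta < f - \delta$ locally for $\beta$ large, so $e^{\beta(u_\beta - f)} \le e^{-\beta\delta} \to 0$ uniformly there; since $H_m(f)^+ + \omega^n/\beta$ has locally bounded density, $H_m(u_\beta) \to 0$ in mass on that open set, whence $H_m(P(f)) = 0$ on $\{P(f) < f\}$, i.e. $H_m(P(f))$ is carried by the contact set $\{P(f) = f\}$. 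To get the precise bound by $\ind_{\{P(f)=f\}} H_m(f)$ (and not just by some measure on the contact set), I would use that where $P(f) = f$ and $f \in \Cc^2$, $f$ itself touches $P(f)$ from above, so $P(f)$ being a viscosity sub-solution of $H_m(\cdot) = H_m(f)^+$ forces, via Lemma \ref{lem: vis pot gen} read in reverse (or directly via the touching argument of Lemma \ref{lem: vis pot 0}), the density comparison $H_m(P(f)) \le H_m(f)$ on the contact set in the sense needed.

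The main obstacle I expect is the passage from the weak convergence $H_m(u_\beta) \weak H_m(P(f))$ to the \emph{pointwise/density} domination $H_m(P(f)) \le \ind_{\{P(f)=f\}} H_m(f)$ on the contact set: weak convergence alone controls mass on open sets but not on the (possibly small, e.g. measure-zero) closed contact set, so one genuinely needs the viscosity comparison argument — touching $u_\beta$ (or $P(f)$) from above by translates of $f$ and invoking the maximum principle as in the proof of Lemma \ref{lem: vis pot gen} — to pin down the density there. The low regularity of $u_\beta$ (merely continuous) also requires the maximum-principle steps throughout to be phrased viscosity-style rather than classically.
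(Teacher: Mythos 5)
Your overall strategy is the paper's: the same family of equations (\ref{eq: beta 1}), the same viscosity/touching argument at the maximum of $u_\beta-f$ to get $u_\beta\le f$, and the same identification of the uniform limit with $P(f)$. The final inequality is also fine, and in fact easier than you fear: since $u_\beta\le f$ one has the \emph{global} bound $H_m(u_\beta)\le H_m(f)^++\omega^n/\beta$, which passes to the weak limit on all of $X$; combined with the local-uniform decay of the densities on the open set $\{P(f)<f\}$ this already gives $H_m(P(f))\le \ind_{\{P(f)=f\}}H_m(f)^+$ without any extra work on the contact set.

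The genuine gap is in the key step $u:=\lim u_\beta\ge v$ for every candidate $v\in\SHXo$ with $v\le f$. You propose to apply the comparison principle to $v-\vep$ against $u_\beta$ and argue that $\int_{\{u_\beta<v-\vep\}}H_m(v)$ is small while ``$H_m(v)$ has total mass $\int_X\omega^n$''. Two problems. First, this is circular at this point of the development: $v$ is an \emph{arbitrary} $(\omega,m)$-sh function, and neither its Hessian measure nor the comparison principle against it is available --- the whole purpose of the theorem is to prove $\SHXo=\PmXo$ so that $H_m(v)$ can be defined. The comparison principle of \cite[Corollary 3.15]{Lu13a} only applies to functions already known to lie in $\PmXo$. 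Second, even granting a Hessian measure for $v$, the total mass of $H_m(v)$ tells you nothing about its mass on the sublevel set $\{u_\beta<v-\vep\}$, which could be zero (e.g.\ if $v$ is maximal there); so smallness of $\int_{\{u_\beta<v-\vep\}}H_m(v)$ does not force the set to shrink. The standard repair is to perturb $v$ to $(1-\delta)v-\delta\sup_X|v|$ so that its Hessian measure dominates $\delta^m\omega^n$, which converts the comparison inequality into a volume bound on the sublevel set, and then to pass to the limit in $\beta$ and use that two $(\omega,m)$-sh functions ordered a.e.\ are ordered everywhere --- but one must still first reduce to candidates for which the comparison principle is legitimate. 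The paper sidesteps all of this with a purely pointwise argument: it smooths $u_\beta$ into $v_\beta$ with small Hessian density on $U=\{u<f-\vep\}$, looks at the maximum of $h-(1+\delta)v_\beta$, and uses only the viscosity sub-solution property of the candidate $h$ (Lemma \ref{lem: vis pot 0}, valid for \emph{every} $(\omega,m)$-sh function) to rule out a maximum inside $U$, since there $(1+\delta)^mH_m(v_\beta)\ge\delta^m\omega^n$ would contradict $H_m(v_\beta)<\vep^m\omega^n$. You need either that argument or the perturbation-plus-volume repair; as written your step does not go through.
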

This Theorem is the principal result of our paper. Right after we know how to regularize singular functions the other parts of the potential theory can be easily adapted from the classical pluripotential theory for Monge-Amp\`ere equations.
Let us also stress that our proof is new even in the case of complex Monge-Amp\`ere equations.

\begin{proof}
Fix $\beta>1$. To simplify the notation we set $\f:=u_{\beta}$. 
It follows from Lemma \ref{lem: vis pot gen} that $\f$ is also a viscosity sub-solution of equation
$$
e^{\beta(\f-f)} \left(H_m(f)^+  + \frac{\omega^n}{\beta} \right) -(\omega +dd^c \f)^m\wedge \omega^{n-m}=0. 
$$
Let $x_0$ be a point where $\f-f$ attains its maximum on $X$. Then $f-f(x_0)+\f(x_0)$ touches $\f$ from above at $x_0$. By definition of viscosity sub-solutions we get
$$
e^{\beta(\f(x_0)-f(x_0))} \left(H_m(f)^+  + \frac{\omega^n}{\beta} \right) (x_0) -(\omega +dd^c f)^m\wedge \omega^{n-m}(x_0) \leq 0.
$$
We then infer that $\f(x_0)\leq f(x_0)$ which proves that $\f-f\leq 0$
on $X$. 

Now, fix $\beta>\gamma>1$. Since $u_{\beta}\leq f$, it is easy to see that 
$$
(\omega+dd^c u_{\beta})^m\wedge \omega^{n-m} \leq 
e^{\gamma(u_{\beta}-f)}\left(H_m(f)^+  + \frac{\omega^n}{\gamma} \right) .
$$
It thus follows from the comparison principle (see \cite[Corollary 3.15]{Lu13a}) that $u_{\beta} \geq u_{\gamma}$. Therefore the sequence
$(u_{\beta})$ 
converges. Observe also that the right-hand side of (\ref{eq: beta 1}) has uniformly bounded density. It then follows from \cite{DK11} and \cite{Lu13a} that $(u_{\beta})$ converges uniformly on $X$ to $u\in \PmXo \cap \Cc(X)$ which satisfies
$$
(\omega +dd^c u)^m\wedge \omega^{n-m} \leq \ind_{\{u=f\}}  H_m(f)^+.
$$
\medbreak

Now, we prove that $u=P(f)$. Let us fix $h\in \SHXo$ such that 
$h\leq f$. We need to show that $h\leq u$. The idea behind the proof is quite simple: since $H_m(u)$ vanishes on $\{u<f\}$, $u$ must be maximal there, hence $u$ dominates any candidate defining $P(f)$.

Fix $\vep>0$ and set 
$U:= \{u < f-\vep\}$. Write 
$$
(\omega +dd^c u_{\beta})^m\wedge \omega^{n-m} = f_{\beta} \omega^n,
$$
where $f_{\beta}$ is a non-negative continuous function on $X$. Since 
$u_{\beta}$ converges uniformly on $X$ to $u$ and $f_{\beta}$ converges uniformly to $0$ on $U$, we can find $\beta>0$ very big such that 
$$
\sup_U f_{\beta} < \vep^m/2 \ \ {\rm and}\ \ \sup_X |u_{\beta}-u|<\vep/2 .
$$
Now, since $f_{\beta}$ is continuous on $X$ there is a sequence of smooth positive functions $g_{\beta}^j$ converging uniformly to $f_{\beta}$ on $X$ such that $\int_X g_{\beta}^j \omega^n =\int_X \omega^n$.  Let $v_{\beta}^j$ be the corresponding smooth solutions to the complex Hessian equations $H_m(v_{\beta}^j)=g_{\beta}^j\omega^n$.
Then it follows from \cite{DK11} that $v_{\beta}^j$ converges uniformly to $u_{\beta}$ on $X$. Now for $j$ large enough we have found $v_{\beta}$ (we drop the index $j$) a smooth $(\omega,m)$-sh functions on $X$ such that
$$
H_m(v_{\beta})= g_{\beta}\omega^n,\ \ \sup_X |v_{\beta}-u_{\beta}| < \vep/2 \ \ {\rm and} \ \ \sup_X |g_{\beta}-f_{\beta}| < \vep^m/2 .
$$
In particular, we have 
$$
H_m(v_{\beta})= g_{\beta}\omega^n,\ \ \sup_X |v_{\beta}-u| < \vep \ \ {\rm and} \ \ \sup_U g_{\beta} < \vep^m .
$$
Consider the function 
$$
\phi:= h - (1+\delta) v_{\beta}, 
$$
where $\delta=\vep/(1-\vep)$.
Since $\phi$ is upper semicontinuous on $X$ compact, it attains its maximum on $X$ at some $y_0\in X$. 
 
Assume that $y_0\in U$.  Then the function $(1+\delta)v_{\beta} -(1+\delta) v_{\beta}(y_0)+h(y_0)$ touches $h$ from above at $y_0$. Then by definition of viscosity sub-solutions and by Lemma \ref{lem: vis pot 0} we get 
$$
\left[\omega + (1+\delta)dd^c v_{\beta} \right]^k \wedge \omega^{n-k} (y_0) \geq 0 , \forall k=1\cdots m.
$$
This yields 
$$
(1+\delta)^m H_m(v_{\beta})(y_0) \geq \delta^m\omega^n (y_0),
$$ 
which is a contradiction since in $U$, $H_m(v_{\beta})<\vep^m\omega^n$
and $\delta=\vep (1+\delta)$. 

Therefore, $y_0\notin U$. Since $u\geq f -\vep$ on $X\setminus U$ and since 
$\sup_X |v_{\beta}-u|<\vep$ we get
$$
\phi(y)\leq \phi(y_0)\leq -\delta f(y_0) +2\vep (1+\delta), \ \forall
y\in X.
$$
We thus obtain
$$
h  \leq  (1+\delta) u +\delta \sup_X |f|  +3\vep (1+\delta) .
$$
By letting $\vep\downarrow 0$ (and hence $\delta$ also goes to $0$) we obtain $h\leq u$.

\end{proof}

\begin{cor}
\label{cor: smooth approximation}
For any $\f \in \SHXo$ there exists a sequence 
$(\f_j)\subset \SHXo\cap \Cc^{\infty}(X)$ decreasing to $\f$ on $X$. In particular $\f\in \PmXo$ and the two classes coincide. 
\end{cor}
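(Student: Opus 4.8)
The plan is to obtain the smooth decreasing sequence by combining Theorem~\ref{thm: upper bound} with an elementary telescoping/diagonal bookkeeping; the point is that the real analytic work has already been carried out in Theorem~\ref{thm: upper bound}, so what remains is essentially assembly. \emph{Step 1 (reduction to smooth data).} Since $\f\in\SHXo$ is upper semicontinuous and bounded above on the compact manifold $X$, it is a decreasing limit of continuous functions, and every continuous function on $X$ is in turn a decreasing limit of smooth ones (mollify in coordinate charts, add a vanishing constant to restore monotonicity, and diagonalize). Hence we may fix $f_j\in\Cc^{\infty}(X)$ with $f_j\downarrow\f$ on $X$; in particular $f_j\geq\f$.

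\emph{Step 2 (projection).} For each $j$ apply Theorem~\ref{thm: upper bound} to $f_j$: this gives $P(f_j)\in\PmXo\cap\Cc(X)$. The operator $P$ is plainly monotone ($g\leq h\Rightarrow P(g)\leq P(h)$), so $(P(f_j))_j$ is decreasing; and since $\f$ is $(\omega,m)$-sh with $\f\leq f_j$ we have $\f\leq P(f_j)\leq f_j$, whence $P(f_j)\downarrow\f$ on $X$. This already yields the last sentence of the statement: each $P(f_j)$ lies in $\PmXo$, which is stable under decreasing limits, so $\f\in\PmXo$; as $\PmXo\subseteq\SHXo$ is automatic, the two classes coincide.

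\emph{Step 3 (continuous $\to$ smooth).} It remains to make the potentials smooth while keeping monotonicity. For each fixed $j$, $P(f_j)$ is continuous and $(\omega,m)$-sh, and it is a uniform limit of smooth $(\omega,m)$-sh functions: either by Plis's Theorem~\ref{thm: Plis reg compact} applied to the continuous \emph{strictly} $(\omega,m)$-sh function $(1-\tau)P(f_j)$ followed by $\tau\to0$ (using $\|P(f_j)\|_{\infty}<\infty$), or, staying within the paper's own methods, by recalling that in the proof of Theorem~\ref{thm: upper bound} the function $P(f_j)=\lim_\beta u_\beta$ is produced as a uniform limit of the smooth solutions $v^k_\beta$ of complex Hessian equations with smooth positive densities. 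Either way, choose $w_j\in\SHXo\cap\Cc^{\infty}(X)$ with $\|w_j-P(f_j)\|_{\infty}<3^{-j}$ and set $\f_j:=w_j+2\cdot 3^{-j}$. Then $\f_j\in\SHXo\cap\Cc^{\infty}(X)$, $\f_j\geq P(f_j)+3^{-j}\geq\f$, and
\[
\f_{j+1}=w_{j+1}+2\cdot 3^{-j-1}\leq P(f_{j+1})+3^{-j}\leq P(f_j)+3^{-j}\leq w_j+2\cdot 3^{-j}=\f_j ,
\]
while $\f\leq\f_j\leq P(f_j)+3^{1-j}\to\f$; hence $\f_j\downarrow\f$ on $X$, as required.

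\emph{On the main obstacle.} Granting Theorem~\ref{thm: upper bound} (and behind it the continuity of $P(f)$ for $f\in\Cc^2$, which is where Berman's $\beta$-convergence and the viscosity comparison genuinely enter), everything above is bookkeeping. The one point needing care is Step~3: upgrading a decreasing sequence of \emph{continuous} $(\omega,m)$-sh functions to a decreasing sequence of \emph{smooth} ones, which forces one to invoke a Richberg-type regularization of continuous $(\omega,m)$-sh potentials. Quoting Plis makes this instantaneous; a proof independent of \cite{Pl13} must instead harvest the smooth auxiliary solutions already present in the proof of Theorem~\ref{thm: upper bound}, which is precisely why it matters there that the right-hand sides be approximable by \emph{smooth, strictly positive} densities.
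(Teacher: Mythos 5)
Your proof is correct and follows essentially the same route as the paper: take smooth $f_j\downarrow\f$, project to get continuous $P(f_j)\downarrow\f$, approximate each $P(f_j)$ uniformly by smooth $(\omega,m)$-sh functions, and shift by summable constants to restore monotonicity, concluding $\f\in\PmXo$ from stability of that class under decreasing limits. The only (immaterial) difference is in justifying the uniform smooth approximation of $P(f_j)$: the paper invokes the Dinew--Ko{\l}odziej stability results via the fact that $H_m(P(f_j))\leq \ind_{\{P(f_j)=f_j\}}H_m(f_j)$ has bounded density, which is essentially your option of harvesting the smooth auxiliary solutions from the proof of Theorem~\ref{thm: upper bound}.
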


\begin{proof}
Let $\f$ be a $(\omega,m)$-subharmonic function on $X$. Since $\f$ is in particular upper semicontinuous we can find a sequence $(f_j)$ of smooth functions on $X$ decreasing to $\f$. Note that $(f_j)$ is a priori not $(\omega,m)$-sh on $X$. Let $P(f_j)$ be the upper envelope
of $(\omega,m)$-sh functions lying below $f_j$. It follows from Theorem \ref{thm: upper bound} that $P(f_j)$ is a continuous 
$(\omega,m)$-sh function on $X$ which belongs to $\PmXo$ and satisfies
$$
H_m(P(f_j)) \leq  \ind_{\{P(f_j) = f_j\}} H_m(f_j) .
$$
On the above  the right-hand side has bounded density. Thus it follows from \cite{DK11} (see also \cite{Lu13a}) that for each $j$, $P(f_j)$ is the uniform limit of a sequence of smooth $(\omega,m)$-sh functions.
Therefore, for each $j$ we can find $\f_j\in \SHXo \cap \Cc^{\infty}(X)$ such that 
$$
P(f_j) + \frac{1}{j+1} \leq \f_j \leq P(f_j) + \frac{1}{j}.
$$
Then it is clear that $\f_j$ decreases to $\f$. Now, it follows from 
\cite[Proposition 3.2]{Lu13a} that $\f$ belongs to $\PmXo$ and hence
$\SHXo =\PmXo$. 
\end{proof}

We immediately get the following:
\begin{cor}
\label{cor: continuous proj}
If $h\in \Cc(X)$ then $P(h)$ is a continuous $(\omega,m)$-sh function. Its Hessian measure $H_m(P(h))$ vanishes on $\{P(h)<h\}$.
\end{cor}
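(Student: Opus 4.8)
The plan is to deduce Corollary \ref{cor: continuous proj} directly from Theorem \ref{thm: upper bound} by an approximation argument: sandwich a continuous function $h$ between smooth functions and pass to the limit using the Lipschitz-type stability of the projection (Lemma \ref{lem: proj stable}) together with the already-established structure of $H_m(P(f))$ for smooth $f$.

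First I would pick sequences of smooth functions. Since $h\in\Cc(X)$ we may choose $f_j\in\Cc^\infty(X)$ with $\sup_X|f_j-h|\le 1/j$, say with $h-1/j\le f_j\le h+1/j$. By Lemma \ref{lem: proj stable} we have $\sup_X|P(f_j)-P(h)|\le 1/j$, so $P(f_j)\to P(h)$ uniformly on $X$; in particular $P(h)$ is a uniform limit of the continuous functions $P(f_j)$ (continuity of $P(f_j)$ being part of Theorem \ref{thm: upper bound}), hence $P(h)$ is continuous and, being also $(\omega,m)$-sh, lies in $\PmXo$ by Corollary \ref{cor: smooth approximation}. This settles the first assertion.

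For the statement about the Hessian measure, fix $\vep>0$ and set $U_\vep:=\{P(h)<h-\vep\}$, an open set since $P(h)-h$ is upper semicontinuous; it suffices to show $H_m(P(h))$ vanishes on each $U_\vep$, since $\{P(h)<h\}=\bigcup_{\vep>0}U_\vep$. On $U_\vep$, for $j$ large enough ($1/j<\vep/2$) we have $P(f_j)\le P(h)+1/j<h-\vep/2\le f_j-\vep/2+1/j<f_j$, so $U_\vep\subset\{P(f_j)<f_j\}$ for all large $j$. By Theorem \ref{thm: upper bound}, $H_m(P(f_j))\le\ind_{\{P(f_j)=f_j\}}H_m(f_j)$, so $H_m(P(f_j))$ vanishes on $U_\vep$ for all large $j$. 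Since $P(f_j)\to P(h)$ uniformly and the $P(f_j)$ are bounded $(\omega,m)$-sh functions in $\PmXo$, the Hessian measures $H_m(P(f_j))$ converge weakly to $H_m(P(h))$ (here I invoke the continuity of the Hessian operator along uniformly convergent sequences in $\PmXo$, as recalled in Section \ref{sect: Hessian operator} and proved in \cite{Lu13a}). Weak convergence of non-negative measures gives, for any open $U'$ with $\overline{U'}\subset U_\vep$,
$$
H_m(P(h))(U')\le\liminf_{j\to\infty}H_m(P(f_j))(U')=0,
$$
and exhausting $U_\vep$ by such $U'$ yields $H_m(P(h))(U_\vep)=0$. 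Letting $\vep\downarrow 0$ finishes the proof.

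The only genuinely delicate point is the weak continuity of $H_m$ along the sequence $P(f_j)$: one must know the $P(f_j)$ all lie in $\PmXo=\SHXo$ (which we have, since they are continuous $(\omega,m)$-sh functions covered by Corollary \ref{cor: smooth approximation}) and that uniform convergence there implies weak convergence of the associated Hessian measures — this is precisely the construction of the Hessian operator recalled in Subsection \ref{sect: Hessian operator}. Everything else is the elementary interplay between the inclusion $U_\vep\subset\{P(f_j)<f_j\}$ and the support statement of Theorem \ref{thm: upper bound}, together with the trivial lower-semicontinuity of mass on open sets under weak convergence.
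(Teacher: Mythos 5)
Your proof is correct and follows essentially the same route as the paper: approximate $h$ uniformly by smooth $f_j$, use Lemma \ref{lem: proj stable} for continuity, and pass the support statement of Theorem \ref{thm: upper bound} to the limit via the weak continuity of $H_m$ under uniform convergence. The only (harmless) difference is that you invoke the stated conclusion $H_m(P(f_j))\leq \ind_{\{P(f_j)=f_j\}}H_m(f_j)$ directly, whereas the paper first rederives the smooth case from the $u_\beta$-approximants before approximating.
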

\begin{proof}
To prove the first statement it suffices to approximate  $h$ by smooth functions and apply Lemma \ref{lem: proj stable}. To prove the second statement
we first assume that $h$ is smooth on $X$. It follows from Theorem \ref{thm: upper bound} that $P(h)=\lim_{\beta \to +\infty} u_{\beta}$ is the uniform limit of continuous 
$(\omega,m)$-sh functions on $X$. For each $\vep>0$, $H_m(u_{\beta})$
converges uniformly to $0$ on $\{P(h) < h-\vep\}$. This coupled with convergence results in \cite{Lu13a} explain why 
$H_m(P(h))$ vanishes on $\{P(h)<h\}$. The general case follows by approximating $h$ uniformly by smooth functions.
\end{proof}

\section{The Hessian  $m$-capacity}

In Section \ref{sect: Hessian operator} we define Hessian $m$-capacity of any open subset. Now, we know that the Hessian operator is well-defined 
for any bounded $(\omega,m)$-sh function. It turns out that in the definition of the Hessian $m$-capacity one can take the supremum over all 
$(\omega,m)$-sh functions whose values vary from $-1$ to $0$ and not only $\Cc^2$ functions. We then extend this definition to any Borel subset.

For each Borel subset $E\subset X$ we define the Hessian $m$-capacity of $E$ by
$$
\Capm(E) := \sup \left\{\int_E H_m(u) \setdef u\in \SHXo , \ -1\leq u\leq 0 \right\}. 
$$
The following properties of $\Capm$ follow directly from the definition:
\begin{prop}
(i) If $E_1\subset E_2\subset X$ then $\Capm(E_1)\leq \Capm(E_2)$ .

(ii) If $E_1, E_2, \cdots$ are Borel subsets of $X$ then 
$$
\Capm\left(\bigcup_{j=1}^{\infty} E_j\right)\leq \sum_{j=1}^{+\infty} \Capm (E_j) .
$$

(iii) If $E_1\subset E_2\subset \cdots$ are Borel subsets of $X$ then
$$
\Capm \left(\bigcup_{j=1}^{\infty} E_j\right) = \lim_{j\to+\infty} \Capm (E_j) .
$$

\end{prop}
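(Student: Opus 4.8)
The plan is to deduce all three properties directly from the single fact that for every bounded $(\omega,m)$-sh function $u$ the Hessian measure $H_m(u) = (\omega + dd^c u)^m \wedge \omega^{n-m}$ is a genuine non-negative Radon measure on $X$ (this is exactly what was gained from the regularization result, Corollary \ref{cor: smooth approximation}, together with the construction of $H_m(u)$ in Section \ref{sect: Hessian operator}), combined with elementary properties of integration against non-negative measures. In particular, enlarging the class of competitors in the definition of $\Capm$ from $\SHXo \cap \Cc^2(X)$ to all of $\SHXo$ is now legitimate.

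For (i), fix any competitor $u \in \SHXo$ with $-1 \leq u \leq 0$. Since $H_m(u) \geq 0$ and $E_1 \subset E_2$, we have $\int_{E_1} H_m(u) \leq \int_{E_2} H_m(u) \leq \Capm(E_2)$; taking the supremum over all such $u$ gives $\Capm(E_1) \leq \Capm(E_2)$. For (ii), again fix a competitor $u$ with $-1 \leq u \leq 0$. From the pointwise inequality $\ind_{\bigcup_j E_j} \leq \sum_j \ind_{E_j}$ and $H_m(u) \geq 0$ (using Tonelli to interchange the sum and the integral of non-negative terms) we obtain $\int_{\bigcup_j E_j} H_m(u) \leq \sum_j \int_{E_j} H_m(u) \leq \sum_j \Capm(E_j)$, and taking the supremum over $u$ yields the claimed subadditivity.

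For (iii), write $E := \bigcup_j E_j$. Since $E_j \subset E$, property (i) gives that $j \mapsto \Capm(E_j)$ is non-decreasing and bounded above by $\Capm(E)$, hence $\lim_j \Capm(E_j) \leq \Capm(E)$. Conversely, fix a competitor $u$ with $-1 \leq u \leq 0$. Because $\ind_{E_j} \uparrow \ind_E$ pointwise and $H_m(u)$ is a non-negative measure, the monotone convergence theorem gives $\int_E H_m(u) = \lim_j \int_{E_j} H_m(u) \leq \lim_j \Capm(E_j)$. Taking the supremum over all such $u$ yields $\Capm(E) \leq \lim_j \Capm(E_j)$, and together with the previous inequality this gives equality.

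There is no serious obstacle in this proof; it is the standard argument for capacities in pluripotential theory. The only point where the earlier work is genuinely used is that $H_m(u)$ is a well-defined non-negative Radon measure for \emph{every} bounded — not merely $\Cc^2$ — $(\omega,m)$-sh function $u$, which is what makes the competitor class the one appearing in the statement and what justifies both the monotone convergence step in (iii) and the very fact that the integrals $\int_E H_m(u)$ make sense.
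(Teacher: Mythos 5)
Your proof is correct and is exactly the standard measure-theoretic argument the authors have in mind: the paper gives no proof, stating only that these properties ``follow directly from the definition,'' and your use of monotonicity of the measure, countable subadditivity via $\ind_{\cup_j E_j}\leq \sum_j \ind_{E_j}$, and continuity from below of the non-negative Radon measure $H_m(u)$ for each fixed competitor $u$ is precisely the intended justification. Your remark that the enlargement of the competitor class to all bounded $(\omega,m)$-sh functions rests on the regularization theorem is also the correct reading of the paper.
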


For each Borel subset $E$ set 
$$
h_{m,E} := \sup\{u\in \SHXo \setdef u\leq -1 \ {\rm on }\ E\ {\rm and}\ u\leq 0 \ {\rm on}\ X \}. 
$$
Let $h^{*}_{m,E}$ be the upper semicontinuous regularization of $h_{m,E}$. We call it the relative $m$-extremal function of $E$. 
\begin{thm}

\label{thm: vanishing}
Let $E$ be any Borel subset of $X$ and denote by $h_E$ the relative $m$-extremal function of $E$. Then $h_E$ is a bounded $(\omega,m)$-subharmonic function. Its Hessian measure vanishes on  the open subset $\{h_E<0\} \setminus \bar{E}$.
\end{thm}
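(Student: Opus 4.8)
The plan is to reduce Theorem \ref{thm: vanishing} to the already-established vanishing result for projections (Corollary \ref{cor: continuous proj}) by realizing the relative $m$-extremal function $h_E$ as a monotone limit of projections $P(h)$ for suitable continuous obstacle functions $h$. First I would dispose of boundedness: any $u\in\SHXo$ with $u\le 0$ on $X$ satisfies a uniform lower bound (this is a standard consequence of the compactness of the family of normalized $(\omega,m)$-sh functions together with the fact that such functions are bounded below; it follows from the submean value property in local charts and a covering argument), so $h_{m,E}\ge -C$ for some $C=C(X,\omega,m)$, and hence $h_E=h^*_{m,E}$ is a bounded $(\omega,m)$-sh function lying between $-C$ and $0$.

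For the vanishing statement, the key observation is that $h_E$ is essentially $P(-\ind_E)$ but with the discontinuous obstacle $-\ind_E$; to use Corollary \ref{cor: continuous proj} I would approximate $\ind_E$ from above. Consider first the case where $E=K$ is compact. Pick a decreasing sequence of open neighborhoods $U_j\supset K$ and continuous functions $\chi_j:X\to[0,1]$ with $\chi_j=1$ on $K$, $\chi_j=0$ off $U_j$, and $\chi_{j+1}\le\chi_j$. Set $h_j:=P(-\chi_j)$, which by Corollary \ref{cor: continuous proj} is continuous $(\omega,m)$-sh with $H_m(h_j)$ supported on the contact set $\{h_j=-\chi_j\}$. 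The sequence $(h_j)$ is decreasing (since $-\chi_j$ is) and bounded below by $-1$, hence decreases to some $h\in\SHXo$; one checks $h=h_{m,K}$ because any competitor $u$ for $h_{m,K}$ satisfies $u\le -\ind_K\le -\chi_j$ is false in general — rather $u\le 0$ and $u\le -1$ on $K$, so $u\le -\chi_j$ fails only where $0<\chi_j<1$; instead I would use that $\max(u,-1)\le -\chi_j$ when $\chi_j$ is supported close enough to $K$ — more carefully, $h_{m,K}\le h_j$ for each $j$ since $-\chi_j\ge -\ind_K\ge$ any competitor only after truncation, and conversely $\lim h_j\le 0$ and $\lim h_j\le -1$ on $K$, giving $\lim h_j\le h_{m,K}$. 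Thus $h_j\downarrow h_{m,K}$, and since $h_{m,K}$ is already upper semicontinuous here (or by passing to the regularization) we get $h_K=\lim h_j$.

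Now on the open set $G:=\{h_K<0\}\setminus K$, fix a point $x_0\in G$ and a small ball $B\Subset G$ around it, so $B\cap K=\emptyset$. For $j$ large, $U_j\cap B=\emptyset$, hence $\chi_j\equiv 0$ on $B$; on the set $\{h_j<0\}$, Corollary \ref{cor: continuous proj} gives $H_m(h_j)=0$, and since $h_j\downarrow h_K<0$ on $\bar B$ we have $h_j<\tfrac12 h_K<0$ on $\bar B$ for $j$ large (by Dini, using continuity of $h_j$ and upper semicontinuity of $h_K$ after noting $h_j$ is continuous). Therefore $H_m(h_j)=0$ on $B$ for all large $j$. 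Passing to the limit using the convergence of Hessian measures along decreasing sequences in $\PmXo$ (recalled in Section \ref{sect: Hessian operator}, using $\SHXo=\PmXo$ from Corollary \ref{cor: smooth approximation}), we conclude $H_m(h_K)=0$ on $B$, hence on all of $G$. The general Borel case follows by inner regularity: $h_{m,E}=(\sup_{K\subset E}h_{m,K})^*$ over compact $K\subset E$, an increasing family, so $h_E=(\lim_j h_{K_j})^*$ along an exhausting sequence $K_j\uparrow$, and on $\{h_E<0\}\setminus\bar E\subset\{h_{K_j}<0\}\setminus K_j$ the same convergence of Hessian measures along \emph{increasing} sequences (which holds in $\PmXo$ for locally bounded functions) forces $H_m(h_E)=0$.

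The main obstacle I anticipate is the bookkeeping in the first step — verifying that the projections $P(-\chi_j)$ really do decrease to the relative extremal function $h_E$ rather than to something strictly larger, since $-\chi_j$ is not itself of the form $-\ind_{E'}$; the truncation argument ($\max(u,-1)$ is still $(\omega,m)$-sh by stability under maximum, recalled in Section \ref{sect: Hessian operator}) needs to be deployed carefully to squeeze $h_{m,K}$ between $h_j$ and $\lim h_j$. The second potential difficulty is justifying weak continuity of $H_m$ along the monotone sequences, but this is exactly what is recalled in Section \ref{sect: Hessian operator} once we know $\SHXo=\PmXo$, so it should be quotable rather than reproved.
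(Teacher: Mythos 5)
Your overall strategy---realizing $h_E$ as a monotone limit of projections $P(-\chi_j)$ of continuous obstacles and importing the contact-set property from Corollary \ref{cor: continuous proj}---is genuinely different from the paper's, which disposes of the theorem in one line by the classical balayage argument: on a small ball $B\subset\{h_E<0\}\setminus\bar E$ one solves the homogeneous local Dirichlet problem (available by Plis's result) to lift $h_E$ inside $B$ without leaving the defining family, forcing $H_m(h_E)=0$ on $B$. Your route could be made to work for compact $E$, but as written it has a genuine gap at its central step, the identification $\lim_j P(-\chi_j)=h_K$. Since $\chi_{j+1}\le\chi_j$, the obstacles $-\chi_j$ \emph{increase}, so $h_j:=P(-\chi_j)$ is an \emph{increasing} sequence, not a decreasing one; correspondingly the inequality you assert, $h_{m,K}\le h_j$, is false (a competitor $u$ with $u(x)=0$ at a point where $\chi_j(x)=1/2$ violates $u\le -\chi_j$, and the truncation $\max(u,-1)$ does not help since $\max(u,-1)(x)=0>-1/2$). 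The correct bookkeeping is the opposite: each $h_j$ is itself a competitor for $h_{m,K}$ (it is $\le 0$ and $\le -1$ on $K$), so $h_j\le h_{m,K}$; for the reverse inequality one must exploit upper semicontinuity of each competitor $u$: for $\vep>0$ the open set $\{u<-1+\vep\}$ contains $K$, hence contains $U_j$ for $j$ large, whence $u-\vep\le -\chi_j$ and so $u-\vep\le h_j$. One then passes to the usc regularization of the increasing limit via Lemma \ref{lem: negligible} and uses the \emph{increasing} convergence of Hessian measures (Lemma \ref{lem: increase}), not the decreasing one. Separately, your justification of boundedness is wrong: there is no uniform lower bound for $(\omega,m)$-sh functions that are merely $\le 0$ (they can have logarithmic poles). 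The conclusion is nevertheless trivial because the constant $-1$ is a competitor, so $-1\le h_E\le 0$.

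The reduction from compact to general Borel $E$ is also not secured. As $K$ increases the functions $h_{m,K}$ \emph{decrease} (more constraints), so the family is not the increasing one you describe; a general Borel set need not be exhausted by compacts; and the inner-regularity identity $h_E=\left(\inf_{K\subset E}h_{m,K}\right)^*$ that you would need is essentially Choquet capacitability, which in this paper is only established in Theorem \ref{thm: cap formula}, whose proof in turn relies on the vanishing statement you are trying to prove---so invoking it here risks circularity. The cleanest repair is to drop the reduction entirely and argue locally on the open set $\{h_E<0\}\setminus\bar E$ by balayage, which works for arbitrary Borel $E$ in one stroke.
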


\begin{proof}
The first statement is obvious. The second statement follows from the standard balayage argument since it follows from \cite{Pl13} that we can locally solve the Dirichlet problem on any small ball. 

\end{proof}

The following convergence result can be proved in the same way as in \cite{Kol05}.

\begin{lem}

\label{lem: increase}
Let $(\f_j^k)_{j=1}^{+\infty}$ be a uniformly bounded sequence of $(\omega,m)$-sh functions for $k=1,...,m$, which increases almost everywhere to $\f^1,...,\f^m\in \SHXo$ respectively. Then 
$$
u(\omega+dd^c\f_j^1)\wedge (\omega+dd^c\f_j^2) \wedge ...\wedge (\omega+dd^c \f_j^m)\wedge \omega^{n-m}
$$ 
converges weakly in the sense of  Radon measures to 
$$
u(\omega+dd^c\f^1)\wedge (\omega+dd^c\f^2) \wedge ...\wedge (\omega+dd^c \f^m)\wedge \omega^{n-m},
$$ 
for every quasi-continuous function $u$. 
\end{lem}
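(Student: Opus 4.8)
The plan is to reduce the statement to the Bedford–Taylor type convergence theorem together with a quasi-continuity/capacity argument, following the strategy of \cite{Kol05}. I would argue by induction on $m$, but the structure is already visible at the level of the full wedge product. First I would recall that each $\f_j^k$ can be assumed to lie in $\PmXo=\SHXo$ (Corollary \ref{cor: smooth approximation}), so the mixed Hessian currents
$$
T_j := (\omega+dd^c\f_j^1)\wedge\cdots\wedge(\omega+dd^c\f_j^m)\wedge\omega^{n-m}
$$
are well-defined positive measures of uniformly bounded total mass $\int_X\omega^n$. The first step is the measure-level convergence $T_j\weak T:=(\omega+dd^c\f^1)\wedge\cdots\wedge(\omega+dd^c\f^m)\wedge\omega^{n-m}$, which is the standard monotone-convergence statement for mixed Hessian operators; this is the analogue of the Bedford–Taylor theorem for increasing sequences and can be obtained exactly as in the Monge–Ampère case by an integration-by-parts induction, using that wedging with $\omega^{n-m}$ and with fixed $m$-positive currents preserves the needed positivity (Lemma \ref{lem: Garding} and the $m$-positivity of currents in Definition \ref{def: m-sh smooth}).

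The second and main step is to upgrade weak convergence of the measures $T_j$ to weak convergence of $u\,T_j$ for a fixed quasi-continuous $u$. Here I would follow the capacity argument: given $\vep>0$, quasi-continuity of $u$ (valid since $u$ is quasi-continuous by hypothesis, and the relevant notion is the Hessian $m$-capacity $\Capm$) gives an open set $G$ with $\Capm(G)<\vep$ on whose complement $u$ is continuous. On a compact set $K$ one has a uniform bound $T_j(G\cap K)\le C\,\Capm(G)$; this is the key capacity estimate, proved as in pluripotential theory by comparing $T_j$ against the capacity of $G$ via the relative extremal function $h^{*}_{m,G}$ of Theorem \ref{thm: vanishing} and a comparison/Chern–Levine–Nirenberg type inequality for mixed Hessian masses. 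Similarly $T(G)\le C\vep$. On $X\setminus G$ the function $u$ is continuous, so by Tietze we can extend $u|_{X\setminus G}$ to a continuous $\tilde u$ on $X$ with $\|\tilde u\|_\infty\le\|u\|_\infty$; then $\int\chi\,u\,T_j$ and $\int\chi\,\tilde u\,T_j$ differ by at most $C\vep\|u\|_\infty\|\chi\|_\infty$ uniformly in $j$ (and likewise for $T$), while $\int\chi\,\tilde u\,T_j\to\int\chi\,\tilde u\,T$ by Step 1 since $\chi\tilde u$ is continuous. Letting $\vep\to0$ gives $\int\chi\,u\,T_j\to\int\chi\,u\,T$ for every continuous test function $\chi$, which is exactly the asserted weak convergence of $u\,T_j$.

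The step I expect to be the main obstacle is the uniform capacity estimate $T_j(G)\le C\,\Capm(G)$ with $C$ independent of $j$. In the Monge–Ampère setting this rests on the Chern–Levine–Nirenberg inequalities and a well-developed comparison principle; here one must check that the analogous bounds hold for mixed $(\omega,m)$-Hessian masses with constants depending only on $\omega$, $n$, $m$ and the uniform $L^\infty$-bound on the $\f_j^k$. I would extract this from the comparison principle and the Hessian $m$-capacity machinery already set up (the properties of $\Capm$ in the proposition of Section~4, Theorem \ref{thm: vanishing}, and the Chern–Levine–Nirenberg estimates established in \cite{Lu13a}), so that the bound is genuinely uniform in $j$ because it only sees the common sup-bound. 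Once that estimate is in hand, the rest is the routine quasi-continuity/Tietze argument sketched above, and the induction on $m$ propagates the convergence through the wedge product one factor at a time.
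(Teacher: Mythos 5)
Your argument is exactly the one the paper has in mind: the paper offers no written proof, simply asserting that the lemma ``can be proved in the same way as in [Kol05]'', and your two-step scheme (monotone Bedford--Taylor convergence of the mixed Hessian measures, then the quasi-continuity/Tietze reduction backed by a uniform Chern--Levine--Nirenberg-type bound $T_j(G)\leq C\,\Capm(G)$ for uniformly bounded potentials) is precisely Ko{\l}odziej's argument transplanted to the $(\omega,m)$-Hessian setting. The ingredients you flag as needing verification (the mixed-mass capacity estimate and the comparison principle) are indeed available from \cite{Lu13a} and the capacity machinery of Section~4, so the proposal is correct and matches the intended proof.
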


\begin{lem}

\label{lem: negligible}
Let $\Uc$ be a family of $(\omega,m)$-sh functions which are uniformly bounded from above. Define 
$$
\f(z):= \sup \{u(z) \setdef u\in \Uc \}.
$$
Then the Borel subset $\{\f <\f^*\}$ has zero Hessian $m$-capacity.
\end{lem}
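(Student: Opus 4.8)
The plan is to reduce to the well-known "fundamental lemma" in pluripotential theory that an upper envelope of a uniformly bounded family of plurisubharmonic (here $(\omega,m)$-sh) functions differs from its u.s.c. regularization only on a set of zero capacity, and to prove it via a Choquet-type argument combined with a monotone-convergence/quasi-continuity input. First I would reduce to a countable subfamily: by Choquet's lemma there is a countable subset $\{u_k\}\subset\Uc$ with $\sup_k u_k$ having the same u.s.c. regularization $\f^*$ as $\f$; replacing each $u_k$ by $\max(u_1,\dots,u_k)$, which is again $(\omega,m)$-sh by the stability of $\PmXo=\SHXo$ under maxima (Corollary \ref{cor: smooth approximation} identifies the two classes), I get an increasing sequence $v_k\in\SHXo$ with $v_k\uparrow v$ where $v^*=\f^*$ and $\{v<v^*\}\supset\{\f<\f^*\}$ (in fact the two negligible sets agree off a pluripolar set, but containment suffices). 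So it is enough to show $\Capm(\{v<\f^*\})=0$ for an increasing sequence $v_k\uparrow v$ with $v^*=\f^*$.

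Next I would test against an arbitrary competitor in the capacity. Fix $w\in\SHXo$ with $-1\le w\le 0$; I must bound $\int_{\{v<\f^*\}}H_m(w)$. For $\vep>0$ set $A_\vep:=\{v<\f^* -\vep\}$; since $\{v<\f^*\}=\bigcup_\vep A_\vep$ up to the countable union over $\vep=1/j$, and using upper continuity of $\Capm$ along increasing unions (Proposition part (iii)), it suffices to show $\int_{A_\vep}H_m(w)=0$ for each $\vep$. The key step is a "negligibility from below" argument: because $v_k\uparrow v$ and $v^*=\f^*$ quasi-everywhere with respect to $\Capm$, the function $\mathbf 1_{A_\vep}$ is, up to a set of capacity zero, equal to a quasi-continuous function, and one applies the monotone convergence result Lemma \ref{lem: increase} with all $m$ factors equal to $w$ together with the observation that $v_k<\f^*-\vep$ forces, on $A_\vep$, the inequality $v_k+\vep<\f^*\le$ (the sup of the family), so on $A_\vep$ one can run a balayage/comparison argument showing $H_m(w)$ there is dominated by $H_m$ of a function that agrees with $\f^*$ — hence the mass tends to zero. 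Concretely, I expect to invoke the comparison principle (\cite[Corollary 3.15]{Lu13a}) on the open sets $\{v_k<\f^*-\vep\}$, which shrink as $k\to\infty$, to get $\int_{\{v_k<\f^*-\vep\}}H_m(w)\le C\,\Capm(\{v_k<\f^*-\vep\})$ and then let $k\to\infty$; since $v^*=\f^*$, the sets $\{v_k<\f^*-\vep\}$ decrease to a set already contained in $\{v<\f^*-\vep\}=A_\vep$, and the standard iteration (as in Bedford–Taylor, or \cite{GZ07}, \cite{Kol05}) closes the loop by a self-improving estimate $\Capm(A_\vep)\le C\,\Capm(A_{\vep/2})^{1/?}$ type bound, forcing $\Capm(A_\vep)=0$.

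The main obstacle I anticipate is making the last self-improvement step rigorous in the compact $(\omega,m)$-setting: one needs the local integrability / quasi-continuity properties of $(\omega,m)$-sh functions and the fact that the Hessian operator is continuous under the relevant monotone limits, both of which are now available (Lemma \ref{lem: increase} and the identification $\PmXo=\SHXo$ via Corollary \ref{cor: smooth approximation}), but the bookkeeping of the capacity estimates — getting a genuine contraction rather than just a bound — requires care with the exponents coming from the $m$-Hessian (as opposed to the full Monge–Ampère) operator. I would handle this exactly as in \cite{Kol05} and \cite{GZ07}, quoting those capacity estimates which transfer verbatim once Lemma \ref{lem: increase} and the comparison principle are in hand; the only genuinely new input compared to the $m=n$ case is the regularization/identification of classes, which the paper has already established.
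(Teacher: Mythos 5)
Your reduction via Choquet's lemma to an increasing sequence $v_k\uparrow v$ with $v^*=\f^*$ and $\{\f<\f^*\}\subset\{v<v^*\}$ is correct and matches the paper's starting point, but the core of your argument does not close. The inequality $\int_{\{v_k<\f^*-\vep\}}H_m(w)\le C\,\Capm(\{v_k<\f^*-\vep\})$ is just the definition of $\Capm$ (with $C=1$) and carries no information; the sets $\{v_k<\f^*-\vep\}$ decrease to $\{v\le \f^*-\vep\}$, which \emph{contains} $A_\vep$ rather than being contained in it; and a ``self-improving'' bound of the shape $\Capm(A_\vep)\le C\,\Capm(A_{\vep/2})^{1/?}$ cannot force $\Capm(A_\vep)=0$, since $A_{\vep/2}\supset A_\vep$ --- there is no contraction available. (Iterations of that shape, as in Ko{\l}odziej's work, are the mechanism behind $L^\infty$ estimates, not behind negligibility of envelopes.) You also lean on the claim that $\mathbf{1}_{A_\vep}$ is quasi-continuous up to a set of zero capacity, which presupposes exactly the negligibility you are trying to prove, and the assertion that ``$H_m(w)$ on $A_\vep$ is dominated by $H_m$ of a function that agrees with $\f^*$'' is never given a precise meaning. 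That missing step is the gap.

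The argument that actually works --- and is what the paper does --- replaces the set-theoretic manipulation of sublevel sets by a quantitative convergence of energies. One proves by induction on $m$, integrating by parts against $T=(\omega+dd^c w)^{m-1}\wedge\omega^{n-m}$ and invoking the monotone convergence statement (Lemma \ref{lem: increase}) together with the quasi-continuity of the bounded competitor $w$, that $\int_X \f_j\,H_m(w)\to\int_X \f^*\,H_m(w)$, the base case $m=0$ being simply $\f=\f^*$ Lebesgue-almost everywhere. Once this is in hand, negligibility is immediate: on $K_{\alpha,\beta}=\{\f\le\alpha<\beta\le\f^*\}$ one has $\f^*-\f_j\ge\beta-\alpha$, so $(\beta-\alpha)\int_{K_{\alpha,\beta}}H_m(w)\le\int_X(\f^*-\f_j)\,H_m(w)\to 0$, forcing $\int_{K_{\alpha,\beta}}H_m(w)=0$ for every competitor $w$; and $\{\f<\f^*\}$ is the countable union of such sets, so $\sigma$-subadditivity of $\Capm$ finishes the proof. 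Your instinct to use Lemma \ref{lem: increase} is right, but it must enter through the limit $\int_X\f_j\,H_m(w)\to\int_X\f^*\,H_m(w)$, not through a capacity contraction.
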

\begin{proof}
By Choquet's lemma we can find a sequence $(\f_j)\subset \Uc$ which increases to $\f$. 

\medbreak

\noindent{\bf Step 1}: Assume that $u\in \SHXo$ and $-1\leq u\leq 0$. We prove by induction on $m$ that 
\begin{equation}
\label{eq: step 1 induction}
\lim_{j\to+\infty}\int_X \f_j  H_m(u) =  \int_X \f^*  H_m(u).
\end{equation}
The result holds for $m=0$ since $\f=\f^*$ almost everywhere (with respect to the Lebesgue measure).  Assume that it holds for $k=1,...,m-1$.  By setting $T:=(\omega +dd^c u)^{m-1}\wedge \omega^{n-m}$ and integrating by parts we get
\begin{eqnarray*}
\int_X \f_j H_m(u) &=& \int_X \f_j H_{m-1}(u) + \int_X \f_j dd^c u \wedge T\\
& = & \int_X \f_j H_{m-1}(u) + \int_X u dd^c \f_j \wedge T\\
& = & \int_X \f_j H_{m-1}(u) + \int_X u (\omega+dd^c \f_j) \wedge T
- \int_X u H_{m-1}(u).\\
\end{eqnarray*}
Since $u$ is quasi-continuous on $X$, by letting $j\to +\infty$ and using Lemma \ref{lem: increase} and using  the induction hypothesis 
we obtain
\begin{eqnarray*}
\lim_{j\to +\infty} \int_X \f_j H_m(u) &=& 
\int_X \f^* H_{m-1}(u) + \int_X u (\omega+dd^c \f^*) \wedge T
- \int_X u H_{m-1}(u)\\
&=&\int_X \f^* H_m(u).
\end{eqnarray*}

\medbreak

\noindent{\bf Step 2}: Since $\Capm$ is $\sigma$-subadditive, it suffices to prove that for each pair
$(\alpha,\beta)$ of rational numbers such that $\alpha<\beta$ the compact subset
$$
K_{\alpha,\beta} := \{x\in X \setdef \f(x) \leq \alpha <\beta \leq \f^*\}
$$
has zero Hessian $m$-capacity. This is an easy consequence of Step 1. The proof is thus complete.

\end{proof}

The outer Hessian $m$-capacity of a Borel subset $E$ is defined by
$$
\Capm^*(E) := \inf \left\{\Capm(U) \setdef E\subset U\subset X, \ U\ {\rm is\ open\ in }\ X \right\}. 
$$
The following properties of $\Capm^*$ follow directly from the definition:
\begin{prop}
(i) If $E_1\subset E_2\subset X$ then $\Capm^*(E_1)\leq \Capm^*(E_2)$ .

(ii) If $E_1, E_2, \cdots$ are Borel subsets of $X$ then 
$$
\Capm^*\left(\bigcup_{j=1}^{\infty} E_j\right)\leq \sum_{j=1}^{+\infty} \Capm^* (E_j) .
$$
\end{prop}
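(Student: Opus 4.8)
The plan is to obtain both statements directly from the analogous properties of the inner capacity $\Capm$ proved in the previous proposition, using only elementary manipulations of infima. For part (i), monotonicity is essentially built into the definition: if $E_1\subset E_2$, then every open set $U$ with $E_2\subset U\subset X$ automatically satisfies $E_1\subset U\subset X$, so the infimum defining $\Capm^*(E_1)$ is taken over a family of open sets containing the one used for $\Capm^*(E_2)$; enlarging the set over which one takes an infimum can only decrease it, hence $\Capm^*(E_1)\leq \Capm^*(E_2)$.

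For part (ii) I would run the classical argument showing that an outer capacity obtained by approximating from outside with open sets inherits countable subadditivity from a set function which is already countably subadditive on Borel (in particular open) sets. We may assume $\sum_{j=1}^{+\infty}\Capm^*(E_j)<+\infty$, otherwise there is nothing to prove. Fix $\vep>0$. For each $j$, by definition of $\Capm^*(E_j)$ choose an open set $U_j$ with $E_j\subset U_j\subset X$ and $\Capm(U_j)\leq \Capm^*(E_j)+\vep 2^{-j}$. Then $U:=\bigcup_{j=1}^{\infty}U_j$ is open in $X$ and contains $\bigcup_{j=1}^{\infty}E_j$, so by the definition of $\Capm^*$ together with the $\sigma$-subadditivity of $\Capm$ (applied to the Borel sets $U_j$) we get
$$
\Capm^*\Big(\bigcup_{j=1}^{\infty}E_j\Big)\leq \Capm(U)\leq \sum_{j=1}^{+\infty}\Capm(U_j)\leq \sum_{j=1}^{+\infty}\Capm^*(E_j)+\vep .
$$
Letting $\vep\downarrow 0$ finishes the proof.

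The argument is entirely formal; the only point that deserves a second of attention — and it is the sole (very minor) obstacle — is to ensure that in the displayed chain the middle term is $\Capm(U)$ with the \emph{inner} capacity, so that the earlier $\sigma$-subadditivity statement for $\Capm$ may legitimately be invoked. This is precisely why one first replaces each $E_j$ by an open superset $U_j$ before summing, rather than attempting to sum $\Capm^*(E_j)$ directly.
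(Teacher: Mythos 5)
Your proof is correct and is exactly the standard argument the paper has in mind when it asserts these properties ``follow directly from the definition'': monotonicity of the infimum over a larger family of open supersets for (i), and the $\vep 2^{-j}$ choice of open supersets combined with the $\sigma$-subadditivity of $\Capm$ on Borel sets for (ii). Nothing further is needed.
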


Now we give a formula for the outer Hessian $m$-capacity of any Borel subset of $X$ in terms of its relative $m$-extremal function.

\begin{thm}

\label{thm: cap formula}
Let $E\subset X$ be a Borel subset and $h$ denote the relative 
$m$-extremal function of $E$. Then the outer Hessian $m$-capacity of $E$ is given by
$$
\Capm^*(E) = \int_X (-h)H_m(h) .
$$
The Hessian $m$-capacity satisfies  the following continuity properties:

(i) If $(E_j)_{j\geq 0}$ is an increasing sequence of arbitrary subsets of $X$ and $E := \cup_{j\geq 0} E_j$ then 
$$
\Capm^*(E) = \lim_{j\to+\infty} \Capm^*(E_j).
$$

(ii) If $(K_j)_{j\geq 0}$ is a decreasing sequence of compact subsets of $X$ and 
$K:= \cap_{j\geq 0} K_j$ 
$$
\lim_{j\to+\infty}\Capm^*(K_j)  = \Capm^*(K)= \Capm(K).
$$ 
In particular, $\Capm^*$ is an outer regular Choquet capacity and we have
$$
\Capm^*(B) =\Capm(B)
$$ 
for every Borel set $B$.
\end{thm}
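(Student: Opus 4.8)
The plan is to follow the classical pluripotential-theoretic scheme (as in \cite{Kol05} and in the parallel theory for $\omega$-plurisubharmonic functions), the only genuinely new input being the potential theory for the complex Hessian operator developed above: the comparison principle \cite[Corollary 3.15]{Lu13a}, the vanishing Theorem \ref{thm: vanishing}, the convergence Lemma \ref{lem: increase}, the negligibility Lemma \ref{lem: negligible}, and quasi-continuity. I would first record the elementary fact that every Borel set $N$ with $\Capm(N)=0$ is $H_m(u)$-negligible for every bounded $u\in\SHXo$: rescaling $u$ so that $-1\leq u/M\leq 0$ one has $u/M\in\SHXo$ and $(\omega+dd^c(u/M))^m\wedge\omega^{n-m}\geq M^{-m}H_m(u)$, hence $\int_N H_m(u)\leq M^m\Capm(N)=0$. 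This is used repeatedly to discard the sets on which a relative $m$-extremal function differs from its upper semicontinuous regularization. The whole statement reduces to the capacity formula for a single compact set; the continuity properties and the final identity $\Capm^*=\Capm$ on Borel sets then follow by a standard bootstrap, so the main obstacle will be the reverse inequality in the compact case.

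So I would first prove $\Capm(K)=\int_X(-h)H_m(h)$ for $K\subset X$ compact, with $h:=h_{m,K}^*$. One observes that $\int_X(-h)H_m(h)=\int_K H_m(h)$: by Theorem \ref{thm: vanishing} the measure $H_m(h)$ is carried by $K\cup\{h=0\}$, the factor $-h$ kills it on $\{h=0\}$, and $h=-1$ on $K$ off a $\Capm$-null (hence $H_m(h)$-null) set by Lemma \ref{lem: negligible}. Since $h$ is admissible in the supremum defining $\Capm(K)$, this gives $\int_X(-h)H_m(h)=\int_K H_m(h)\leq\Capm(K)$ at once. For the reverse inequality I must show $\int_K H_m(v)\leq\int_K H_m(h)$ for every $v\in\SHXo$ with $-1\leq v\leq 0$; as $\int_X H_m(v)=\int_X H_m(h)=\int_X\omega^n$, this is equivalent to $\int_{X\setminus K}H_m(h)\leq\int_{X\setminus K}H_m(v)$, and $\int_{X\setminus K}H_m(h)=\int_{\{h=0\}}H_m(h)$ by the vanishing theorem once more. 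The device that makes this work is the strictly negative perturbation $v_\vep:=(1-\vep)v-\vep$, which lies in $\SHXo$ because $\omega+dd^c v_\vep=\vep\,\omega+(1-\vep)(\omega+dd^c v)$ is $m$-positive. Since $v_\vep<0=h$ on $\{h=0\}$ one has $\{h=0\}\subset\{v_\vep<h\}$ up to a null set, while on $K$ one has $v_\vep\geq-1=h$ off a null set, so $\{v_\vep<h\}$ meets $K$ only in a null set; the comparison principle \cite[Corollary 3.15]{Lu13a} applied to $(v_\vep,h)$ then gives $\int_{\{h=0\}}H_m(h)\leq\int_{\{v_\vep<h\}}H_m(h)\leq\int_{\{v_\vep<h\}}H_m(v_\vep)\leq\int_{X\setminus K}H_m(v_\vep)$. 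On the other hand, expanding $\big(\vep\,\omega+(1-\vep)(\omega+dd^c v)\big)^m\wedge\omega^{n-m}$ by the binomial formula yields $H_m(v_\vep)\leq H_m(v)+\sigma_\vep$ with $\sigma_\vep\geq 0$ and $\sigma_\vep(X)\leq m\vep\int_X\omega^n$, so $\int_{X\setminus K}H_m(v_\vep)\leq\int_{X\setminus K}H_m(v)+m\vep\int_X\omega^n$; letting $\vep\downarrow 0$ finishes the compact case.

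The bootstrap is then routine. For an open set $U$, exhaust it by compacts $K_j\nearrow U$: then $\Capm(U)=\lim_j\Capm(K_j)$, while the functions $h_{m,K_j}^*$ decrease to a function whose upper semicontinuous regularization is $h_{m,U}^*$ (the usual monotone-limit property of relative $m$-extremal functions, established as in the classical theory with the help of Lemma \ref{lem: negligible}); passing to the limit in the compact formula via Lemma \ref{lem: increase} and an integration-by-parts argument as in Step~1 of the proof of Lemma \ref{lem: negligible} gives $\Capm(U)=\int_X(-h_{m,U}^*)H_m(h_{m,U}^*)$. Shrinking open neighbourhoods of a compact $K$ down to $K$ then yields $\Capm^*(K)=\Capm(K)$ and statement (ii); choosing open sets $U_j\searrow$ with $E\subset U_j$ and $\Capm(U_j)\to\Capm^*(E)$ and repeating the limiting argument gives the formula $\Capm^*(E)=\int_X(-h_{m,E}^*)H_m(h_{m,E}^*)$ for arbitrary $E$ (in particular Borel $E$), and statement (i) is then immediate, since for increasing $(E_j)$ the functions $h_{m,E_j}^*$ decrease with regularized limit $h_{m,E}^*$. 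Finally $\Capm^*$ is monotone and satisfies (i)--(ii), hence is an outer regular Choquet capacity; therefore Borel (indeed analytic) sets are $\Capm^*$-capacitable, and $\Capm^*(B)=\sup\{\Capm^*(K):K\subset B\text{ compact}\}=\sup\{\Capm(K):K\subset B\}\leq\Capm(B)\leq\Capm^*(B)$. The recurring technical nuisance, already flagged above, is the careful handling via Lemma \ref{lem: negligible} of the $\Capm$-negligible sets on which relative $m$-extremal functions fail to be upper semicontinuous; everything else (Choquet's capacitability theorem, invariance of the total mass $\int_X\omega^n$ of mixed Hessian measures) is classical.
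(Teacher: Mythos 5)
Your proposal is correct and follows essentially the same route as the paper, which simply invokes Lemma \ref{lem: negligible} to neutralize the set $\{h_{m,E}>-1\}\cap E$, refers to the line-by-line adaptation of Theorem 4.2 of \cite{GZ05} (the compact case via the comparison principle and the perturbation $v_\vep=(1-\vep)v-\vep$, then the bootstrap to open and arbitrary sets), and concludes with Choquet's capacitability theorem exactly as you do. The only cosmetic slip is citing Lemma \ref{lem: increase} for the decreasing limits $h_{m,K_j}^*\downarrow h_{m,U}^*$; there one needs the (equally standard) continuity of the mixed Hessian measures along decreasing sequences, which holds by construction of the operator on $\PmXo$.
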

\begin{proof}
It follows from Lemma \ref{lem: negligible} that the subset $\{h>-1\}\cap E$ has zero Hessian $m$-capacity. Thus  we can copy from lines to lines the proof of Theorem 4.2 in \cite{GZ05}. For the last statement 
let us briefly recall the arguments in \cite{BBGZ13}. Since $\Capm^*$
is an (outer regular) Choquet capacity it then follows from Choquet's capacitability theorem that $\Capm^*$ is also inner regular on Borel sets. We thus get
$$
\Capm(B) \leq \Capm^*(B) = \sup_{K\subset B}\Capm^*(K) = \sup_{K\subset B}\Capm (K) = \Capm(B).
$$
\end{proof}

\begin{defi}
Let $E$ be a Borel subset of $X$. The global $(\omega,m)$-subharmonic extremal function of $E$ is $V^*_{m,E}$, where 
$$
V_{m,E} :=\sup \left\{ u\in \SHXo \setdef u\leq 0 \ {\rm on}\ E \right\}.
$$
\end{defi}

\begin{defi}
A subset $E\subset X$ is called $m$-polar if $\Capm^*(E)=0$.
\end{defi}

\begin{lem}
If $E\subset \{\f=-\infty\}$ for some $\f\in \SHXo$ then $E$ is $m$-polar.
\end{lem}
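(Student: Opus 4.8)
The plan is to reduce the claim to the capacity formula in Theorem~\ref{thm: cap formula} by a standard rescaling and exhaustion argument. The key point is that if $\f\in\SHXo$ then, after subtracting a constant, we may assume $\sup_X\f=0$, and then for each $N\geq 1$ the function $u_N:=\max(\f/N,-1)$ is an $(\omega,m)$-sh function with $-1\leq u_N\leq 0$. On the set $E\subset\{\f=-\infty\}$ we have $u_N\equiv -1$, so $u_N$ is a competitor in the supremum defining the relative $m$-extremal function $h_{m,E}$; hence $u_N\leq h_{m,E}\leq h_{m,E}^*=h_E$ on all of $X$. Letting $N\to\infty$, the functions $u_N$ increase to $0$ quasi-everywhere (indeed everywhere off the $m$-polar set $\{\f=-\infty\}$), which forces $h_E\geq 0$, and since also $h_E\leq 0$ by definition, we conclude $h_E\equiv 0$. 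Then $H_m(h_E)=H_m(0)=\omega^n$ and the capacity formula gives $\Capm^*(E)=\int_X(-h_E)H_m(h_E)=0$.

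Concretely, the steps in order are: (1) normalize $\f$ so that $\sup_X\f=0$ and observe $E\subset\{\f=-\infty\}$ is unchanged; (2) for $N\in\N$ set $u_N:=\max(\f/N,-1)\in\SHXo$, noting $-1\leq u_N\leq 0$ and $u_N=-1$ on $E$, so $u_N\leq h_{m,E}\leq h_E$; (3) since $u_N\nearrow 0$ pointwise on $X\setminus\{\f=-\infty\}$ and $u_N\leq h_E\leq 0$, deduce $h_E=0$ outside an $m$-polar (in particular Lebesgue-null) set, hence $h_E\equiv 0$ by upper semicontinuity and the integrability of $(\omega,m)$-sh functions; (4) invoke Theorem~\ref{thm: cap formula}: $\Capm^*(E)=\int_X(-h_E)H_m(h_E)=\int_X 0\cdot\omega^n=0$, which is exactly $m$-polarity of $E$.

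There is a mild circularity worry in step~(3) — one wants $\{\f=-\infty\}$ to be Lebesgue-null, but that is immediate since an $(\omega,m)$-sh function is by definition locally integrable, so its $-\infty$ set has measure zero; thus "$h_E=0$ a.e." plus upper semicontinuity and the sub-mean-value property of $(\omega,m)$-sh functions (applied in local charts to $h_E+\rho$) yields $h_E\equiv 0$ genuinely. A cleaner alternative avoiding any regularity subtlety: once $u_N\leq h_E\leq 0$ and $u_N\to 0$ outside a Lebesgue-null set, $h_E$ is an $(\omega,m)$-sh function pinched between $0$ a.e. and $0$, so $h_E=0$ a.e., and since the maximal $(\omega,m)$-sh function below $0$ is $0$ itself, $h_E=0$.

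I expect the only genuine obstacle to be making sure that one is allowed to use the capacity formula \emph{for an arbitrary Borel set} $E$ (not just compact or open): this is precisely the content of the final assertion of Theorem~\ref{thm: cap formula}, that $\Capm^*(B)=\Capm(B)$ for all Borel sets via Choquet capacitability, so the formula $\Capm^*(E)=\int_X(-h)H_m(h)$ is available in the generality we need. Everything else is routine bookkeeping with truncations. One could even shortcut the argument entirely: since $\Capm^*$ is subadditive and $\{\f=-\infty\}=\bigcap_N\{\f<-N\}$, and each sublevel set has capacity controlled via $\max(\f/N,-1)$ by an $O(1/N)$-type bound coming directly from the definition $\Capm(U)=\sup\int_U H_m(u)$ over $u$ with $-1\leq u\leq 0$ — but routing through $h_E\equiv 0$ and Theorem~\ref{thm: cap formula} is the shortest clean path.
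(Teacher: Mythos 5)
Your proposal is correct and follows exactly the paper's (very terse) argument: show that the relative $m$-extremal function of $E$ vanishes identically and then invoke Theorem \ref{thm: cap formula}; your truncations $u_N=\max(\f/N,-1)$ are precisely the standard way to justify the paper's claim that $h^*_{m,E}\equiv 0$. The extra details you supply (normalization, convexity of the $m$-positive cone so that $\f/N\in\SHXo$, and the a.e.\ identification forcing $h_E\equiv 0$) are all sound.
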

\begin{proof}
It is easy to see that the relative $m$-extremal function of $E$ is $0$. Then the result is obtained by invoking Theorem \ref{thm: cap formula}.
\end{proof}

We now prove the Josefson theorem for $(\omega,m)$-sh functions which generalize \cite[Theorem 7.2]{GZ05}. Let us stress that our proof is more direct without using the local capacity which has  not been available yet. Recall that a local $m$-capacity has been studied in 
\cite{SA12} and \cite{Lu13c} where the metric is flat. For a general 
K\"ahler metric we believe that similar study can be done.
\begin{thm}
If $\Capm^*(E)=0$ then $E\subset \{\f=-\infty\}$ for some $\f\in \SHXo$.
\end{thm}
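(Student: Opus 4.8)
The plan is to prove the Josefson-type theorem by the classical gluing argument: one first establishes that an $m$-polar set is \emph{locally} contained in the $-\infty$ locus of a local $m$-subharmonic function, and then patches these local potentials into a single global $(\omega,m)$-sh function. Since we already know from Theorem \ref{thm: cap formula} that $\Capm^*$ is a Choquet capacity which is inner regular on Borel sets, it suffices to handle the case where $E$ is compact: indeed any $m$-polar Borel set $E$ is a countable union of compact $m$-polar pieces (by inner regularity), and a countable sum $\sum_j 2^{-j}\f_j/(1+\|\f_j\|)$ of suitably normalized $(\omega,m)$-sh functions with $\f_j=-\infty$ on the $j$-th piece, being a decreasing limit of $(\omega,m)$-sh functions that is not identically $-\infty$, will again lie in $\SHXo$ and be $-\infty$ on all of $E$. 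So the heart of the matter is a single compact $m$-polar set.

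For a compact $m$-polar set $E$, I would argue via the global extremal function $V_{m,E}$. The key point is to show $\Capm^*(E)=0$ forces $V^*_{m,E}\equiv +\infty$ on no nonempty open set, more precisely that $\sup_X V^*_{m,E}=+\infty$; equivalently, the relative extremal functions $h^*_{m,E}$ (or rescalings thereof) cannot be uniformly bounded below away from $0$. Here is the mechanism: cover $X$ by finitely many coordinate balls; on each ball $B$, the set $E\cap B$ is $m$-polar for the local Hessian capacity (this uses the comparison of the global capacity with a local one on balls, which follows from the local solvability of the Dirichlet problem cited from \cite{Pl13}, exactly as in Theorem \ref{thm: vanishing}); hence by the local theory of $m$-subharmonic functions (the $\C^n$ version of Josefson, available in \cite{SA12}, \cite{Lu13c}) there is a negative $m$-sh function $g_B$ on a slightly larger ball with $E\cap B\subset\{g_B=-\infty\}$. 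One then glues the $g_B$ using $\omega$-small perturbations and the stability under maxima of $\SHXo$, after subtracting large constants on the overlaps so that the regularized maximum is genuinely $(\omega,m)$-sh globally; the resulting function $\f$ is $-\infty$ on $E$.

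The main obstacle is the gluing step: patching local $m$-sh potentials into a \emph{global} $(\omega,m)$-sh function is delicate because $m$-positivity, unlike plurisubharmonicity, is not preserved under the naive operations, and because $\omega$ is not exact so one must absorb local potentials of $\omega$ correctly. The clean way around this is to avoid explicit gluing altogether and instead run a capacity-theoretic argument directly on $X$: assume for contradiction that no such $\f$ exists, i.e. that $\sup_X V^*_{m,E}=:M<+\infty$. Then $u:=V^*_{m,E}-M\in\SHXo$ satisfies $-c\le u\le 0$ for a constant $c$ (using $V^*_{m,E}\ge0$ since $0$ is a competitor), and a standard argument shows $H_m(u)$ is supported on $\bar E$ with $\int_X H_m(u)=\int_X\omega^n>0$; but then the normalized function $u/c$ is an admissible competitor in the definition of $\Capm(\bar E\,)$ with $\int_{\bar E}H_m(u/c)>0$, contradicting $\Capm^*(E)=\Capm(\bar E)=0$ (the last equality by outer regularity, $E$ compact). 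The care needed is precisely in showing $V^*_{m,E}$ has full Hessian mass concentrated on $E$, which rests on Lemma \ref{lem: negligible} (the negligible set $\{V_{m,E}<V^*_{m,E}\}$ is $m$-polar) together with the maximality of $V^*_{m,E}$ off $E$ proven by balayage as in Theorem \ref{thm: vanishing}; this is where I expect to spend the real effort.
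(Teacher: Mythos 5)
Your third paragraph --- the capacity-theoretic contradiction for a \emph{compact} $m$-polar set --- is essentially correct and close to the paper's argument: both assume $M:=\sup_X V^*_{m,E}<+\infty$, both use the balayage argument of Theorem \ref{thm: vanishing} to see that $H_m(V^*_{m,E})$ lives on $\bar E$, and both derive a contradiction with the vanishing of the capacity. The paper reaches the contradiction by identifying $(V-M)/M$ with the relative $(\omega/M,m)$-extremal function of $E$ and invoking the capacity formula of Theorem \ref{thm: cap formula} plus the domination principle; your route (a normalized competitor $u/c$ with $\int_{\bar E}H_m(u/c)\ge c^{-m}\int_X\omega^n>0$ against $\Capm(\bar E)=0$) is more direct and works fine for compact $E$. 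Do spell out the unproved direction of your ``i.e.'': that $\sup_X V_{m,E}=+\infty$ actually yields $\f$ with $E\subset\{\f=-\infty\}$, via $\f=\sum_j 2^{-j}(\f_j-\sup_X\f_j)$ with $\f_j\le 0$ on $E$, $\sup_X\f_j\ge 2^j$, and the uniform bound $\int_X(u-\sup_X u)\,\omega^n\ge -C$ to see $\f\not\equiv-\infty$; this is exactly how the paper concludes. Your second paragraph's local-gluing plan is rightly abandoned --- the paper explicitly avoids it because the local $m$-capacity theory is not available for a general K\"ahler metric.

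The genuine gap is the reduction to compact $E$. Inner regularity of the Choquet capacity gives $\Capm^*(B)=\sup\{\Capm(K): K\subset B \text{ compact}\}$; it does \emph{not} make a Borel set a countable union of compact sets, and indeed there are $m$-polar Borel sets that are not $\sigma$-compact (take a Borel, non-$\sigma$-compact subset --- e.g.\ a homeomorphic copy of the irrationals --- inside the $m$-polar set $\{\vep\theta\log|z_1|=-\infty\}$ from the paper's example). So the decomposition step fails. Nor can you repair it by cutting $E$ into pieces with merely non-dense closure (which is always possible): your contradiction genuinely requires $\Capm(\bar E)=0$, i.e.\ that the \emph{closure} be $m$-polar, and this fails for general $m$-polar sets (a countable dense subset of $X$ has $\bar E=X$). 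The paper sidesteps this by running the contradiction through the relative extremal function $h_{m,E}$ of $E$ itself, using the formula $\Capm^*(E)=\int_X(-h_{m,E})H_m(h_{m,E})$ of Theorem \ref{thm: cap formula}, which is valid for arbitrary Borel $E$; the closure of $E$ then only enters through the harmless normalization $\bar E\neq X$. To complete your proof you would need to replace your final contradiction by this relative-extremal-function argument (or some other device that sees only $E$ and not $\bar E$).
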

\begin{proof}
Without loss of generality we can assume that $\bar{E}\neq X$. 
Observe that $\Capa_{\theta,m}^*(E)=0$ for any K\"ahler form $\theta$ since we have $C^{-1}\omega \leq \theta \leq C \omega$ for some positive constant $C$. 
Let 
$V:=V^*_{m,E}$ be the global $m$-extremal function of $E$. 

We prove that $V\equiv +\infty$. Assume by contradiction that it is not the case. Then $V$ is a bounded $(\omega,m)$-sh function on $X$. 
Using a balayage argument as in the proof of Theorem \ref{thm: vanishing} we can prove that $H_m(V)$ vanishes on 
$X\setminus \bar{E}$. Thus $V$ can not be  constant.

Let $M=\sup_X V <+\infty$. We claim that $\p:=(V-M)/M$ is the relative 
$(\theta,m)$-extremal function of $E$ with $\theta=\omega/M$. Indeed, let $u$ be any non-positive $(\theta,m)$-sh function on $X$ such that $u\leq -1$ on $E$. Then $M(u+1)$ is a $(\omega,m)$-sh function on $X$ 
which is non-positive on $E$. By definition of the global $(\omega,m)$-sh extremal function we deduce that $M(u+1)\leq V$, which implies what we have claimed.

Now, since $\Capa_{\theta,m}^*(E)=0$ it follows from Theorem \ref{thm: cap formula} that 
$$
\int_{\{\p<0\}} H_m(\p) =0.
$$
This coupled with the comparison principle reveal that $\p=0$ which implies that $V\equiv M$. The latter is a contradiction since $H_m(V)$
vanishes on the open non-empty set $X\setminus \bar{E}$.

Therefore, $V_{m,E}$ is not bounded from above. We then can find a sequence $(\f_j)\subset \SHXo$ such that $\f_j\equiv 0$ on $E$ and
$\sup_X \f_j\geq 2^{j}$. Consider
$$
\f:= \sum_{j=1}^{+\infty} 2^{-j}(\f_j-\sup_X \f_j).
$$
Then since $\f_j=0$ on $E$ it is easy to see that $\f=-\infty$ on $E$.
It follows from Hartogs' lemma (see \cite{Lu13a}) that 
$$
\int_X (u-\sup_X u)\ \omega^n \geq -C, \ \ \forall u\in \SHXo ,
$$
for some positive constant $C$. It follows that $\f$ is not identically $-\infty$. Hence $\f\in \SHXo$ satisfies our requirement. 
\end{proof}

\section{Energy classes}

For convenience we rescale $\omega$ so that $\int_X \omega^n=1$. It follows from Corollary \ref{cor: smooth approximation} that $\SHXo=\PmXo$. Therefore the complex Hessian operator is well-defined for any bounded $(\omega,m)$-sh function. We will follow \cite{GZ07} to extend the definition of $H_m$ to unbounded $(\omega,m)$-sh functions. Almost all results about the weighted energy classes $\Ec_{\chi}(X,\omega)$ can be extended without effort to the corresponding classes of $(\omega,m)$-subharmonic functions. For this reason we only state the result without proof.

Let $\f\in \SHXo$ and denote by $\f_j$ the canonical approximation of 
$\f$ by bounded functions, i.e. $\f_j:= \max(\f,-j)$. It follows from the comparison principle (see \cite{Lu13a}) that
$$
\ind_{\{\f>-j\}}H_m(\f_j)
$$  
is a non-decreasing sequence of positive Borel measures on $X$. We define $H_m(\f)$ to be its limit. Note that the total mass of 
$H_m(\f)$ varies from $0$ to $1$ and it does not charge $m$-polar sets. 

\begin{defi}
We let $\EcXo$ denote the class of $(\omega,m)$-sh functions having full Hessian mass, i.e.
$$
\EcXo := \left\{ u\in \SHXo \setdef \int_X H_m(u) =1\right\}.
$$
\end{defi}

\begin{lem}
A function $u\in \SHXo$ belongs to $\EcXo$ if and only if
$$
\lim_{j\to+\infty} \int_{\{u\leq -j\}} H_m(\max(u,-j)) = 0 .
$$
The sequence $H_m(\max(\f,-j))$ converges  to $H_m(\f)$
in the sense of Borel measures, i.e. for any Borel subset 
$E\subset X$,
$$
\lim_{j\to +\infty}\int_E H_m(\max(\f,-j))  = \int_E H_m(\f).
$$
\end{lem}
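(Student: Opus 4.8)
The plan is to reduce everything to two already-available facts. First, every \emph{bounded} $(\omega,m)$-sh function $v$ has full Hessian mass, $\int_X H_m(v)=\int_X\omega^n=1$; this is Stokes' theorem (equivalently, one passes to the limit in the quasi-uniform approximation recalled in Section~\ref{sect: Hessian operator}, weak convergence on the compact $X$ preserving the total mass). Second, writing $\f_j:=\max(\f,-j)$, the truncated measures $\ind_{\{\f>-j\}}H_m(\f_j)$ form a non-decreasing sequence of positive Borel measures whose limit is, by definition, $H_m(\f)$; consequently, for every Borel set $E\subset X$ the numbers $\int_{E\cap\{\f>-j\}}H_m(\f_j)$ are non-decreasing in $j$ and converge to $\int_E H_m(\f)$.

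First I would establish the characterization. Since $\{\f>-j\}$ and $\{\f\le-j\}$ partition $X$, the first fact gives
$$\int_{\{\f\le-j\}} H_m(\f_j)=\int_X H_m(\f_j)-\int_{\{\f>-j\}}H_m(\f_j)=1-\int_{\{\f>-j\}}H_m(\f_j).$$
By the second fact the right-hand side is non-increasing in $j$, hence $\lim_{j}\int_{\{\f\le-j\}}H_m(\f_j)$ exists and equals $1-\int_X H_m(\f)$. Therefore $\int_X H_m(\f)=1$, that is $\f\in\EcXo$, if and only if this limit is $0$, which is exactly the asserted equivalence.

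Assuming now $\f\in\EcXo$, I would prove the strong convergence of the Hessian measures. Given an arbitrary Borel set $E\subset X$, split
$$\int_E H_m(\f_j)=\int_{E\cap\{\f>-j\}}H_m(\f_j)+\int_{E\cap\{\f\le-j\}}H_m(\f_j).$$
The first term tends to $\int_E H_m(\f)$ by the second fact above; the second is dominated by $\int_{\{\f\le-j\}}H_m(\f_j)$, which tends to $0$ by the characterization just proved. Hence $\int_E H_m(\f_j)\to\int_E H_m(\f)$ for every Borel set $E$, which is the second assertion.

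I do not expect a genuine obstacle: once the two facts above are in hand, the proof is only bookkeeping around the partition $X=\{\f>-j\}\cup\{\f\le-j\}$. The point worth stressing is that the hypothesis $\f\in\EcXo$ really is needed for the convergence statement — taking $E=X$ already forces both sides to be $1$, which fails as soon as $\int_X H_m(\f)<1$ — so for a general $\f\in\SHXo$ one keeps only the non-decreasing convergence $\ind_{\{\f>-j\}}H_m(\f_j)\uparrow H_m(\f)$ coming from the definition.
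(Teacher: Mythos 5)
Your proof is correct, and it is essentially the argument the paper implicitly invokes: the paper states this lemma without proof (deferring to the analogous results in \cite{GZ07}), and the standard proof there is exactly your bookkeeping --- combine the fact that every bounded $(\omega,m)$-sh function has full Hessian mass (Stokes) with the monotone convergence $\ind_{\{\f>-j\}}H_m(\f_j)\uparrow H_m(\f)$ built into the paper's definition, then split over the partition $X=\{\f>-j\}\cup\{\f\le -j\}$. Your closing remark that the second assertion genuinely requires $\f\in\EcXo$ is also correct and worth making, since the lemma's phrasing leaves that hypothesis implicit.
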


\begin{defi}
Let $\chi$ be an increasing function $\R^-\to \R^-$ such that
$\chi(0)=0$ and $\chi(-\infty)=-\infty$. We let $\Ec_{\chi}(X,\omega,m)$ denote the class of functions $\f$ in $\EcXo$ such that
$\chi\circ \f$ is integrable with respect to $H_m(\f)$. When $\chi(t)=-(-t)^p$, $p>0$ we use the notation $\Ec_m^p(X,\omega)$ 
$$
\Ec^p(X,\omega,m) := \left\{ u\in \EcXo \setdef \int_X |u|^p H_m(u) <+\infty \right\}.
$$
\end{defi}

\begin{lem}
Let $\f\in \EcXo$  and $h:\R^+\rightarrow \R^+$ be a continuous increasing function such that $h(+\infty)=+\infty$. Then 
$$
\int_X h\circ |\f| H_m(\f) <+\infty \Longleftrightarrow \sup_{j\geq 0} 
\int_X h\circ |\f_j| H_m(\f_j) <+\infty ,
$$
where $\f_j:=\max(\f,-j)$. 
\end{lem}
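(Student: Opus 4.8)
The plan is to deduce the equivalence from the monotonicity properties already recorded for the canonical approximants $\f_j := \max(\f,-j)$ together with the Borel-convergence $H_m(\f_j) \to H_m(\f)$ stated in the second Lemma of this section. The implication ``$\Rightarrow$'' should be the easy one: first I would reduce to the case where $h$ is, in addition, concave (or at least controlled by a concave increasing function), since replacing $h$ by a smaller concave increasing function with $h(+\infty)=+\infty$ only weakens the left-hand side; then, fixing $j$ and a truncation level $k>j$, I would compare $\int_X h\circ|\f_j|\,H_m(\f_j)$ with $\int_X h\circ|\f|\,H_m(\f)$ by splitting $X$ into $\{\f>-j\}$, where $\f_j=\f$ and $\ind_{\{\f>-j\}}H_m(\f_j)=\ind_{\{\f>-j\}}H_m(\f)$ by the very definition of $H_m(\f)$, and the complement $\{\f\le -j\}$, where $h\circ|\f_j|$ is the constant $h(j)$ while $\int_{\{\f\le -j\}}H_m(\f_j)\le \int_{\{\f\le -j\}}H_m(\f_j)$; since $\f\in\EcXo$, the mass $\int_{\{\f\le -j\}}H_m(\f_j)$ is bounded (indeed $\le 1$), so $h(j)\int_{\{\f\le -j\}}H_m(\f_j)$ is finite for each fixed $j$, giving a uniform bound that also uses $\int_X h\circ|\f|\,H_m(\f)<+\infty$ to control the part over $\{\f>-j\}$. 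A little care is needed to get a bound uniform in $j$ rather than merely finite for each $j$; this is exactly where concavity of $h$ (so that $h(j) \le h\circ|\f|$ plus a telescoping argument over the level sets $\{-k\ge \f > -k-1\}$ absorbs the constant piece) pays off.

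For the reverse implication ``$\Leftarrow$'', assume $\sup_j \int_X h\circ|\f_j|\,H_m(\f_j) =: C < +\infty$. The plan is to show $\int_X h\circ|\f|\,H_m(\f)\le C$ by a monotone-convergence argument. Fix $N>0$ and consider the bounded continuous truncation $h_N := \min(h, N)$. On the fixed Borel set $\{\f>-k\}$ one has, for $j\ge k$, $h_N\circ|\f_j| \ge h_N\circ|\f_k|$ is not quite what I want; rather I would argue as follows: for $j \ge k$, $\f_j \le \f_k$ pointwise, hence $h\circ|\f_j|\ge h\circ|\f_k|$, and in particular $\int_X h\circ|\f_k|\,H_m(\f_j) \le \int_X h\circ|\f_j|\,H_m(\f_j) \le C$; now $h\circ|\f_k|$ is a bounded quasi-continuous (indeed, for $k$ fixed, bounded and l.s.c.\ after composing with the quasi-continuous $\f_k$) function, so letting $j\to+\infty$ and invoking the Borel-convergence $H_m(\f_j)\to H_m(\f)$ — applied via an approximation of $h\circ|\f_k|$ from below by simple functions, or directly via Lemma~\ref{lem: increase} applied to $\f_j$ — yields $\int_X h\circ|\f_k|\,H_m(\f)\le C$. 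Finally let $k\to+\infty$: since $\f_k\downarrow\f$, $|\f_k|\uparrow|\f|$, so $h\circ|\f_k|\uparrow h\circ|\f|$ (here monotonicity and continuity of $h$ enter), and monotone convergence gives $\int_X h\circ|\f|\,H_m(\f)\le C<+\infty$.

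The main obstacle I anticipate is the interchange of limits in the reverse direction: passing $j\to+\infty$ in $\int_X h\circ|\f_k|\,H_m(\f_j)$ is not an immediate consequence of the stated Borel-convergence $\int_E H_m(\f_j)\to\int_E H_m(\f)$ unless $h\circ|\f_k|$ is handled with some care, because that convergence is stated for Borel sets, i.e.\ for indicator functions, and upgrading to a general bounded quasi-continuous integrand requires either the explicit weak-convergence statement of Lemma~\ref{lem: increase} (which covers quasi-continuous integrands for increasing sequences of $(\omega,m)$-sh functions, and $(\f_j)$ is such a sequence) or a standard simple-function approximation of $h\circ|\f_k|$ together with dominated convergence. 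I would resolve this by invoking Lemma~\ref{lem: increase} directly for the increasing sequence $\f_j\uparrow\f$ with quasi-continuous test function $u = h\circ|\f_k|$ (valid since $h\circ|\f_k|$ is bounded and quasi-continuous for each fixed $k$), which makes the limit passage clean. The only other point needing attention — uniformity in $j$ in the ``$\Rightarrow$'' direction — is handled, as indicated above, by the reduction to concave $h$, after which a telescoping estimate over dyadic level sets of $\f$ converts the finiteness of $\int_X h\circ|\f|\,H_m(\f)$ and the finiteness of the total mass $\int_X H_m(\f)\le 1$ into the desired uniform bound.
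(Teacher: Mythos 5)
The paper itself gives no proof of this lemma (it is imported from the weighted-energy theory of \cite{GZ07} "without proof"), so your argument has to stand on its own. Your ``$\Leftarrow$'' direction is essentially sound: for $j\geq k$ one has $h\circ|\f_k|\leq h\circ|\f_j|$, hence $\int_X h\circ|\f_k|\,H_m(\f_j)\leq C$; since $h\circ|\f_k|$ is a bounded Borel function and the measures $H_m(\f_j)$ all have total mass $1$ and converge on every Borel set to $H_m(\f)$ (the preceding lemma of this section), uniform approximation by simple functions lets you pass to the limit in $j$, and monotone convergence in $k$ finishes. But one of your two proposed justifications for that limit passage is wrong: Lemma \ref{lem: increase} does not apply to $(\f_j)=(\max(\f,-j))_j$, because this sequence \emph{decreases} to $\f$ (it is not increasing) and is not uniformly bounded unless $\f$ is. Only the simple-function route survives.

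The genuine gap is in ``$\Rightarrow$''. Your reduction ``replace $h$ by a smaller concave increasing function'' is logically backwards: shrinking $h$ weakens the conclusion $\sup_j\int_X h\circ|\f_j|\,H_m(\f_j)<+\infty$ that must be proved for the \emph{original} $h$, so establishing the implication for some $\tilde h\leq h$ proves nothing about $h$. Fortunately no reduction (and no telescoping) is needed. Since $\f_j=\f$ on $\{\f>-j\}$ and $\ind_{\{\f>-j\}}H_m(\f_j)=\ind_{\{\f>-j\}}H_m(\f)$, while $\int_X H_m(\f_j)=1=\int_X H_m(\f)$ (this is where $\f\in\EcXo$ enters), one gets $H_m(\f_j)(\{\f\leq-j\})=H_m(\f)(\{\f\leq-j\})$; combining this with $h(j)\leq h\circ|\f|$ on $\{\f\leq-j\}$ (here only monotonicity of $h$ is used) yields
$$
\int_X h\circ|\f_j|\,H_m(\f_j)=\int_{\{\f>-j\}}h\circ|\f|\,H_m(\f)+h(j)\,H_m(\f)(\{\f\leq-j\})\leq\int_X h\circ|\f|\,H_m(\f),
$$
uniformly in $j$. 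You had all the ingredients for this on the page but did not assemble them; as written, the uniformity in $j$ is not established.
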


\begin{lem}
If $\f\in \EcXo$ and $\f\leq 0$ there exists a convex increasing function $\chi: \R^-\rightarrow \R^-$ such that $\chi(-\infty)=0$
and $\f\in \Ec_{\chi}(X,\omega,m)$.
\end{lem}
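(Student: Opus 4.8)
The plan is to reduce the statement to an elementary fact about the finite measure $\mu:=H_m(\f)$ (recall $\int_X H_m(\f)\le 1$, and since $\f\le 0$ the function $g:=-\f$ is $\ge 0$). I would look for $\chi$ of the form $\chi(t)=-h(-t)$ for $t\le 0$. Then the requirements that $\chi\colon\R^-\to\R^-$ be convex and increasing with $\chi(0)=0$ and $\chi(-\infty)=-\infty$ (as in the definition of $\Ec_\chi(X,\omega,m)$) become exactly the requirements that $h\colon\R^+\to\R^+$ be \emph{concave} and increasing with $h(0)=0$ and $h(+\infty)=+\infty$, while the integrability condition $\chi\circ\f\in L^1(H_m(\f))$ becomes $\int_X h\circ g\,d\mu<+\infty$. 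So it suffices to produce one such concave weight $h$.

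First I would extract the single input from potential theory: the set $\{\f=-\infty\}$ is $m$-polar by the lemma above, and $H_m(\f)$ does not charge $m$-polar sets, so $\mu(\{g=+\infty\})=0$; since $\mu$ is finite, the non-increasing numbers $b_j:=\mu(\{g>j\})$ therefore tend to $0$. Next, for any absolutely continuous increasing $h$ with $h(0)=0$ the layer-cake (Tonelli) identity gives
\[
\int_X h\circ g\,d\mu=\int_0^{\infty}h'(t)\,\mu(\{g>t\})\,dt ,
\]
so if $h$ is piecewise linear with $h'\equiv s_j$ on $[j,j+1)$ then, since $t\mapsto\mu(\{g>t\})$ is non-increasing,
\[
\int_X h\circ g\,d\mu\ \le\ \sum_{j\ge 0} s_j\, b_j \qquad\text{and}\qquad h(+\infty)=\sum_{j\ge 0} s_j .
\]
Thus the whole problem is to choose a \emph{non-increasing} sequence of slopes $s_j\downarrow 0$ with $\sum_j s_j=+\infty$ (so that $h(+\infty)=+\infty$ and $h$ is concave) but $\sum_j s_j b_j<+\infty$.

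To build such a sequence I would cut $\N$ into consecutive blocks $I_k=[N_k,N_{k+1})$ on which $b_j\le 2^{-k}$, choosing the thresholds recursively so that in addition the block lengths $\ell_k:=N_{k+1}-N_k$ are non-decreasing; concretely $N_0=0$ and $N_{k+1}:=\max\{M_{k+1},\,2N_k\}$, where $M_{k+1}$ is any index past which $b_j\le 2^{-(k+1)}$ (such an index exists since $b_j\to0$), which forces $\ell_k\ge N_k\ge\ell_{k-1}$ and $\ell_k\to+\infty$. Setting $s_j:=1/\ell_k$ for $j\in I_k$ then yields a non-increasing sequence with $s_j\to0$, with $\sum_j s_j=\sum_k \ell_k\cdot\ell_k^{-1}=+\infty$, and with $\sum_j s_j b_j\le\sum_k 2^{-k}\ell_k\cdot\ell_k^{-1}=\sum_k 2^{-k}<+\infty$. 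The associated piecewise-linear $h$, and then $\chi(t):=-h(-t)$, have all the desired properties, and $\int_X(-\chi\circ\f)\,dH_m(\f)=\int_X h\circ g\,d\mu<+\infty$, i.e. $\f\in\Ec_\chi(X,\omega,m)$.

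The one genuinely delicate point is the convexity of $\chi$, equivalently the concavity of $h$: one must keep the slopes $s_j$ non-increasing while still having $\sum_j s_j=+\infty$, and for a single, possibly extremely slowly decaying sequence $b_j$ there is no canonical choice of slopes — this is exactly why the blocks have to be arranged to have non-decreasing lengths, which dictates the recursive choice of the $N_k$. I would also emphasize the conceptual point that we never need $\f\in L^1(H_m(\f))$: the weight $h$ is deliberately built to grow strictly sublinearly (being concave), and this sublinear growth is precisely what makes the construction work for every $\f\in\EcXo$.
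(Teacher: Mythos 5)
Your argument is correct, and it is necessarily a different ``route'' in the trivial sense that the paper gives no proof at all: it declares that the results on weighted energy classes carry over from the Monge--Amp\`ere case and points to \cite{GZ07}. The proof there (and the one implicitly intended here) is the same in spirit as yours --- build a piecewise-linear convex gauge whose slopes are tuned to the decay of the tail masses --- but is usually phrased via the truncations $\f_j=\max(\f,-j)$ and the quantities $\int_{\{\f\le -j\}}H_m(\f_j)$, whereas you work directly with $b_j=H_m(\f)(\f<-j)$. That shortcut is legitimate because the paper has already recorded that the extended operator $H_m(\f)$ is a finite measure putting no mass on $m$-polar sets, hence none on $\{\f=-\infty\}$, so $b_j\downarrow 0$; this is exactly the single potential-theoretic input you isolate. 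Your reduction to the combinatorial statement (given $b_j\downarrow 0$, find non-increasing slopes $s_j$ with $\sum s_j=+\infty$ and $\sum s_j b_j<+\infty$) and the block construction with non-decreasing block lengths, which is what guarantees concavity of $h$ and hence convexity of $\chi$, are both correct; this is the genuinely delicate point and you handle it properly. Two cosmetic remarks. First, the statement as printed contains a typo: the definition of $\Ec_{\chi}(X,\omega,m)$ requires $\chi(-\infty)=-\infty$, and that is what must be (and is) produced; you read the statement correctly, and your $\chi$ in fact also satisfies $\chi'(-\infty)=0$, which is the normalization used in the theorem that follows. Second, in the recursion for the $N_k$ you should insist that $M_{k+1}>N_k$ (in particular $N_1\ge 1$) so that every block is nonempty and $\ell_k\ge 1$; with that one-word fix the construction is airtight.
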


\begin{thm}
Let $\f\in \SHXo$ be such that $\sup_X \f\leq -1$. Let $\chi: \R^-\rightarrow \R^-$ be a smooth convex increasing function such that
$\chi'(-1) \leq 1$ and $\chi'(-\infty) =0$. Then $\chi\circ \f\in \EcXo$.
\end{thm}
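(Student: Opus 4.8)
The plan is to prove, by mimicking the $\omega$-plurisubharmonic case of \cite{GZ07}, first that $\chi\circ\f$ is $(\omega,m)$-subharmonic and then that it has full Hessian mass. For the first point: since $\chi$ is convex and $\chi'$ is non-decreasing with $\chi'(-1)\le 1$ and $\chi'(-\infty)=0$, we have $0\le\chi'\le 1$ on $(-\infty,-1]$, so there $\chi$ is the supremum of affine functions $t\mapsto a_it+b_i$ with slopes $a_i\in[0,1]$; as $\sup_X\f\le -1$, the composition only sees $\chi$ on $(-\infty,-1]$, hence $\chi\circ\f=\sup_i(a_i\f+b_i)$. Each $a_i\f+b_i$ lies in $\SHXo$ because $\omega+dd^c(a_i\f+b_i)=(1-a_i)\omega+a_i(\omega+dd^c\f)$ is $m$-positive, while $\chi\circ\f$ is upper semicontinuous and not identically $-\infty$ (it dominates a fixed $a\f+b$ with $a\ge 0$, hence lies in $L^1(\omega^n)$); so $\chi\circ\f$ equals its own upper semicontinuous regularisation and therefore belongs to $\SHXo$.

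For the second point I would invoke the criterion recalled just above: a function $\psi\in\SHXo$ lies in $\EcXo$ as soon as $\int_{\{\psi\le -j\}}H_m(\max(\psi,-j))\to 0$; take $\psi:=\chi\circ\f$. One may assume $\chi(-\infty)=-\infty$, since otherwise $\psi$ is bounded and $\int_XH_m(\psi)=\int_X\omega^n=1$ by Stokes; then convexity forces $\chi'>0$, so $\chi$ is a strictly increasing homeomorphism and, for $j$ large, $s=s(j)\to+\infty$ is well defined by $\chi(-s(j))=-j$. With $\f_s:=\max(\f,-s)$, a bounded $(\omega,m)$-sh function which is $\le -1$ because $\f\le -1$, monotonicity of $\chi$ gives $\{\psi\le -j\}=\{\f\le -s(j)\}$ and $\max(\psi,-j)=\chi\circ\f_{s(j)}$; as $s(j)\to+\infty$, the whole statement reduces to showing
\[ \int_{\{\f\le -s\}}H_m(\chi\circ\f_s)\ \longrightarrow\ 0\qquad(s\to+\infty). \]

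To estimate this I would use the Bedford--Taylor chain rule for bounded $(\omega,m)$-sh functions (\cite{BT76}, \cite{Lu13a}): $\omega+dd^c(\chi\circ\f_s)=P_s+B_s$, where $P_s:=(1-\chi'(\f_s))\omega+\chi'(\f_s)(\omega+dd^c\f_s)$ is $m$-positive and $B_s:=\chi''(\f_s)\,d\f_s\wedge d^c\f_s\ge 0$ is a $(1,1)$-current of rank one, whence $B_s\wedge B_s=0$ and $H_m(\chi\circ\f_s)=P_s^{m}\wedge\omega^{n-m}+m\,P_s^{m-1}\wedge B_s\wedge\omega^{n-m}$. On $\{\f\le -s\}$ one has $\f_s\equiv -s$, so $\chi'(\f_s)\equiv\chi'(-s)$ there; expanding $P_s^{m}\wedge\omega^{n-m}$ multilinearly and using $\int_X(\omega+dd^c\f_s)^k\wedge\omega^{n-k}=\int_X\omega^n=1$, one gets $\int_{\{\f\le -s\}}P_s^{m}\wedge\omega^{n-m}\le\vol\{\f\le -s\}+1-(1-\chi'(-s))^m$, the first term tending to $0$ because $\f\in L^1(\omega^n)$ and the second because $\chi'(-\infty)=0$. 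For the remaining term I would observe that $d\f_s=\ind_{\{\f>-s\}}d\f$ vanishes almost everywhere on $\{\f\le -s\}$ and that $\f_s$ is constant along $\{\f=-s\}$, so that $B_s$, and hence the positive measure $P_s^{m-1}\wedge B_s\wedge\omega^{n-m}$, carries no mass on $\{\f\le -s\}$; together with the previous estimate this gives the claim, so $\chi\circ\f\in\EcXo$.

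The step I expect to be the main obstacle is this last one: making rigorous the vanishing of $P_s^{m-1}\wedge B_s\wedge\omega^{n-m}$ on $\{\f\le -s\}$ and the multilinear expansion of $H_m(\chi\circ\f_s)$, whose coefficients $\chi'(\f_s),\chi''(\f_s)$ are only bounded and quasi-continuous. This relies on the locality of the Hessian operator and on approximation of $\f$ from above by smooth $(\omega,m)$-sh functions (Corollary \ref{cor: smooth approximation}), i.e.\ exactly the Bedford--Taylor technology developed in the previous sections; it is the $(\omega,m)$-analogue of the corresponding argument in \cite{GZ07}, while everything else is bookkeeping with the bounds $0\le\chi'\le 1$ on $(-\infty,-1]$ and $\chi'(-\infty)=0$.
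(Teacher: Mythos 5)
The paper itself offers no proof of this theorem (it is stated in the block of results said to ``extend without effort'' from \cite{GZ07}), and your route is exactly that adaptation: the first half (writing $\chi$ as a supremum of affine functions with slopes in $[0,1]$ to get $\chi\circ\f\in\SHXo$) and the estimate $\int_{\{\f\le -s\}}P_s^{m}\wedge\omega^{n-m}\le \vol\{\f\le -s\}+1-(1-\chi'(-s))^m$ are correct, the skeleton is the intended one, and the conclusion you need about the gradient term is true. The gap is in the justification you give for that last step. Saying that $d\f_s=\ind_{\{\f>-s\}}d\f$ ``vanishes almost everywhere on $\{\f\le -s\}$'' and that ``$\f_s$ is constant along $\{\f=-s\}$'' does not show that the \emph{measure} $d\f_s\wedge d^c\f_s\wedge T$ puts no mass on $\{\f\le -s\}$: these measures are not absolutely continuous, and the same reasoning applied to $dd^c\f_s$ would ``prove'' that $H_m(\f_s)$ charges no part of $\{\f=-s\}$, which is false (for $\f=\log|z|$ glued into $X$, $\omega+dd^c\f_s$ is concentrated precisely on the level set where $\f_s$ is constant). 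Plurifine locality of the Bedford--Taylor operators kills the contribution of the plurifine \emph{open} set $\{\f<-s\}$, but not of the boundary piece $\{\f=-s\}$, which is exactly where all the danger sits.

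The claim can be rescued, but it needs an actual argument, e.g.\ the following. First dominate $P_s\le\omega+(\omega+dd^c\f_s)$ (using $0\le\chi'\le 1$), so that it suffices to prove $\ind_{\{\f\le -s\}}\,d\f_s\wedge d^c\f_s\wedge T=0$ for $T$ a \emph{closed} $m$-positive $(n-1,n-1)$-current of the form $(\omega+dd^c\f_s)^{k}\wedge\omega^{n-1-k}$; this matters because your $T=P_s^{m-1}\wedge\omega^{n-m}$ is not closed (its coefficients involve $\chi'(\f_s)$) and integration by parts against it produces boundary terms. Then for $\delta>0$ set $v:=\max(\f,-s+\delta)\ge u:=\f_s$ with $0\le v-u\le\delta$; since $v$ is constant on the plurifine open set $\{\f<-s+\delta\}\supset\{\f\le -s\}$, locality gives
\begin{equation*}
\int_{\{\f\le -s\}} d u\wedge d^c u\wedge T\ \le\ \int_X d(v-u)\wedge d^c(v-u)\wedge T
\ =\ \int_X (v-u)\,\bigl(dd^c u-dd^c v\bigr)\wedge T\ \le\ \delta\int_X\omega\wedge T ,
\end{equation*}
and letting $\delta\downarrow 0$ shows the left-hand side vanishes. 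With this in place (together with the approximation of $\f_s$ by smooth $(\omega,m)$-sh functions from Corollary \ref{cor: smooth approximation} to legitimise the multilinear expansion with the quasi-continuous coefficients $\chi'(\f_s),\chi''(\f_s)$), your proof closes.
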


The maximum principle and the comparison principle hold for $\EcXo$:
\begin{thm}
Let $\f,\p$ be two functions in $\EcXo$. Then
$$
\ind_{\{\f<\p\}}H_m(\max(\f,\p))= \ind_{\{\f<\p\}}H_m(\p) 
$$
and 
$$
\int_{\{\f<\p\}} H_m(\p) \leq \int_{\{\f<\p\}} H_m(\f).
$$
\end{thm}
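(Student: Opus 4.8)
The plan is to reduce both statements to the case of bounded $(\omega,m)$-sh functions, where the maximum principle is purely local and the comparison principle is already available as \cite[Corollary 3.15]{Lu13a}, and then to pass to the limit along the canonical truncations $\f_j:=\max(\f,-j)$, $\p_j:=\max(\p,-j)$. For bounded $u,v\in\SHXo$ the identity $\ind_{\{u<v\}}H_m(\max(u,v))=\ind_{\{u<v\}}H_m(v)$ follows at once from the fact that $\max(u,v)=v$ on the plurifine-open set $\{u<v\}$ together with the locality of the Hessian operator in the plurifine topology; the bounded comparison principle then follows by combining this with $\int_X H_m(\max(u,v))=\int_X\omega^n=1$ and $\max(u,v)=u$ on $\{u>v\}$, applied to $u$ and $v-\vep$ and letting $\vep\downarrow0$ (or one simply invokes \cite{Lu13a}).

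Next I would record the elementary set-theoretic identities for the truncations: $\{\f_j<\p_j\}=\{\f<\p\}\cap\{\p>-j\}$, these sets increase with $j$ to $\{\f<\p\}\cap\{\p>-\infty\}$, and on the region $\{\p>-k\}$ one has $\p_j=\p$ and $\{\f_j<\p_j\}=\{\f<\p\}$ as soon as $j\ge k$; moreover $\max(\f_j,\p_j)=\big(\max(\f,\p)\big)_j$ and $\{\f_j\le-l\}=\{\f\le-l\}$ for $j\ge l$. For the maximum principle in $\EcXo$, apply the bounded version to $\f_j,\p_j$, intersect with a fixed Borel set $E\cap\{\p>-k\}$, and let $j\to\infty$: the Borel-measure convergence $\int_B H_m(\cdot_j)\to\int_B H_m(\cdot)$ from the preceding lemma (valid for $\max(\f,\p)$ too, since it dominates $\f\in\EcXo$ and hence lies in $\EcXo$) gives the equality of the two measures on $E\cap\{\p>-k\}\cap\{\f<\p\}$; letting $k\to\infty$ and using that $\{\p=-\infty\}$ is $m$-polar and charged by neither measure yields the claim.

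For the comparison principle I would start from the bounded inequality $\int_{\{\f_j<\p_j\}}H_m(\p_j)\le\int_{\{\f_j<\p_j\}}H_m(\f_j)$, valid for every $j$. On the left side, restricting to $\{\p>-k\}$ (where $\p_j=\p$ and $\{\f_j<\p_j\}=\{\f<\p\}$) and using Borel convergence gives $\liminf_j\int_{\{\f_j<\p_j\}}H_m(\p_j)\ge\int_{\{\f<\p\}\cap\{\p>-k\}}H_m(\p)$, which tends to $\int_{\{\f<\p\}}H_m(\p)$ as $k\to\infty$. The right side is the delicate part: split $\{\f_j<\p_j\}$ along $\{\f>-l\}$ and $\{\f\le-l\}$ with $l$ fixed. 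On $\{\f>-l\}$ the plurifine-locality identity $\ind_{\{\f>-l\}}H_m(\f_j)=\ind_{\{\f>-l\}}H_m(\f_l)=\ind_{\{\f>-l\}}H_m(\f)$ (for $j\ge l$, the last equality being the definition of $H_m(\f)$) bounds that contribution by $\int_{\{\f<\p\}}H_m(\f)$; on $\{\f\le-l\}=\{\f_j\le-l\}$ the same identity plus $\int_X H_m(\f_j)=1$ gives $\int_{\{\f\le-l\}}H_m(\f_j)=1-\int_{\{\f>-l\}}H_m(\f)$, which is independent of $j\ge l$ and tends to $0$ as $l\to\infty$ precisely because $\f\in\EcXo$. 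Hence $\limsup_j\int_{\{\f_j<\p_j\}}H_m(\f_j)\le\int_{\{\f<\p\}}H_m(\f)$, and chaining these three estimates through the bounded comparison inequality finishes the proof.

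I expect this last step — bounding the mass of $H_m(\f_j)$ near $\{\f=-\infty\}$ uniformly in $j$ — to be the main obstacle; it is what genuinely uses the defining property of $\EcXo$ and the stabilization of the truncated Hessian measures under plurifine locality. By contrast the maximum principle in $\EcXo$ is a soft consequence of locality and the Borel convergence of the truncations, and everything reduces to bookkeeping once the bounded case and those two ingredients are in place.
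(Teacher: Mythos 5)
Your argument is correct and is exactly the standard Guedj--Zeriahi truncation argument that the paper itself defers to: the theorem is stated without proof, the authors remarking that the results on energy classes extend ``without effort'' from \cite{GZ07}. Your reduction to the bounded case (where the maximum principle follows from quasi-continuity/locality and the comparison principle is \cite[Corollary 3.15]{Lu13a}), followed by the Borel-measure convergence of $H_m(\max(\f,-j))$ to $H_m(\f)$ and the uniform vanishing of the mass of $H_m(\f_j)$ on $\{\f\le -l\}$ coming from $\f\in\EcXo$, is precisely that proof.
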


\begin{prop}\label{prop: maximum comparison}
Assume that $\f,\p \in \EcXo$ such that $H_m(\f) \geq \mu$
and $H_m(\p) \geq \mu$ for some positive Borel measure $\mu$
on $X$. Then 
$$
H_m(\max(\f,\p)) \geq \mu .
$$
\end{prop}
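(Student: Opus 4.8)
The plan is to derive the inequality $H_m(\max(\f,\p)) \geq \mu$ from the maximum principle for $\EcXo$ stated just above, which gives
$$
\ind_{\{\f<\p\}}H_m(\max(\f,\p)) = \ind_{\{\f<\p\}}H_m(\p), \qquad
\ind_{\{\p<\f\}}H_m(\max(\f,\p)) = \ind_{\{\p<\f\}}H_m(\f).
$$
Writing $u:=\max(\f,\p)$ and decomposing $X$ into the three Borel pieces $\{\f<\p\}$, $\{\p<\f\}$ and the coincidence set $\{\f=\p\}$, the first two pieces are immediately under control: on $\{\f<\p\}$ we have $H_m(u)=H_m(\p)\geq\mu$ and on $\{\p<\f\}$ we have $H_m(u)=H_m(\f)\geq\mu$. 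So the whole issue is to handle the contact set $D:=\{\f=\p\}$, i.e. to show $\ind_D H_m(u)\geq \ind_D\mu$.

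On $D$ the natural idea is to localize by a convexity/regularization argument. First I would reduce to the bounded case by the canonical truncation: set $\f_j=\max(\f,-j)$, $\p_j=\max(\p,-j)$, $u_j=\max(\f_j,\p_j)=\max(u,-j)$, and use that, since $\f,\p\in\EcXo$, the measures $H_m(\f_j),H_m(\p_j),H_m(u_j)$ converge to $H_m(\f),H_m(\p),H_m(u)$ in the Borel sense (the lemma on $\EcXo$). It therefore suffices to prove the statement for bounded $(\omega,m)$-sh functions, where $H_m$ behaves well under the Bedford–Taylor type operations established in Section \ref{sect: Hessian operator}. For bounded $\f,\p$, the standard trick is to approximate: for $\vep>0$ consider $\f+\vep$ and observe $\max(\f,\p)=\lim_{\vep\downarrow 0}\max(\f+\vep,\p)$ decreasingly, and the point is that on a neighbourhood of $\{\f+\vep<\p\}\cup\{\p+\vep<\f\}$ the maximum is \emph{locally} equal to one of the two functions. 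A cleaner route is to invoke the local version of this inequality: locally, $\max$ of two $m$-sh functions whose Hessian measures both dominate $\mu$ again has Hessian measure dominating $\mu$ — this is because the inequality $H_m(\max(\f,\p))\geq\mu$ is a statement that can be checked on the open sets $\{\f>\p-\delta\}$ and $\{\p>\f-\delta\}$ (which cover $X$), on each of which $\max(\f,\p)$ is a decreasing limit of functions of the form $\max(\f,\p-\delta)$ that locally coincide with one of the two.

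Concretely, I would fix $\delta>0$ and use the identity $\max(\f,\p)=\lim_{\delta\downarrow 0}\big(\max(\f,\p+\delta)\big)$, decreasing in $\delta$. On the open set $G_\delta:=\{\f>\p+\delta\}$ one has $\max(\f,\p+\delta)=\f$, hence $H_m(\max(\f,\p+\delta))=H_m(\f)\geq\mu$ there; on $\{\p>\f\}\subset\{\p+\delta>\f\}$, continuity of the construction and the maximum principle give $H_m(\max(\f,\p+\delta))=H_m(\p+\delta)=H_m(\p)\geq\mu$; and the remaining set $\{\f\leq\p+\delta\}\cap\{\p\leq\f\}=\{\p\leq\f\leq\p+\delta\}$ shrinks to the $\mu$-contact considerations as $\delta\downarrow 0$. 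By continuity of $H_m$ along the decreasing sequence $\max(\f,\p+\delta)\downarrow\max(\f,\p)$ (Bedford–Taylor monotone convergence for bounded $(\omega,m)$-sh functions, available here since $\SHXo=\PmXo$ by Corollary \ref{cor: smooth approximation}) we obtain $H_m(\max(\f,\p))\geq\mu$ on all of $X$. Finally, undo the truncation: apply the above to $\f_j,\p_j$ (which still satisfy $H_m(\f_j)\geq\ind_{\{\f>-j\}}\mu$ on $\{\f>-j\}$ and similarly for $\p_j$, shrinking to $\mu$), and pass to the limit using $H_m(u_j)\to H_m(u)$ in the Borel sense to conclude $H_m(\max(\f,\p))\geq\mu$.

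The main obstacle is the analysis on the contact set $D=\{\f=\p\}$: away from it everything is a direct consequence of the maximum principle, but on $D$ neither $\f$ nor $\p$ is obviously "the" maximum, so one genuinely needs the local structure of $m$-sh maxima plus the monotone continuity of $H_m$, and one must be careful that the truncations $\f_j,\p_j$ really do inherit the hypothesis $H_m\geq\mu$ up to sets that vanish in the limit. I expect the write-up to hinge on choosing the right approximating sequence ($\max(\f,\p+\delta)$ versus mollifications) so that each approximant locally coincides with one of the two functions on an open cover, which is exactly what lets the maximum principle do all the work.
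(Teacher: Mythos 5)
The paper states this proposition without proof (deferring to the Monge--Amp\`ere case), so your proposal must stand on its own; its overall shape --- perturb one function by a constant, combine the maximum principle with locality off the contact set, and pass to a decreasing limit --- is indeed the standard route. But as written there is a genuine gap, located exactly where you flag the ``main obstacle''. With your cover of $X$ by $\{\f>\p+\delta\}$ and $\{\p>\f\}$, the uncontrolled set is the band $\{\p\leq\f\leq\p+\delta\}$, which decreases to the contact set $\{\f=\p\}$ as $\delta\downarrow 0$; it does \emph{not} shrink to a $\mu$-negligible set. Since $\mu$ is an arbitrary positive Borel measure, $\mu(\{\f=\p\})$ may be positive, and your limiting argument only yields $H_m(\max(\f,\p))\geq \ind_{\{\f\neq\p\}}\,\mu$, which is strictly weaker than the claim. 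Saying the band ``shrinks to the $\mu$-contact considerations'' does not resolve this.

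Two ingredients close the gap. First, apply the maximum principle on the full sublevel set $\{\f<\p+\delta\}$ rather than on $\{\p>\f\}$: since $\p+\delta\in\EcXo$ and $H_m(\p+\delta)=H_m(\p)$, one gets
$$
H_m(\max(\f,\p+\delta))\ \geq\ \ind_{\{\f<\p+\delta\}}H_m(\p)+\ind_{\{\f>\p+\delta\}}H_m(\f)\ \geq\ \mu-\ind_{\{\f=\p+\delta\}}\,\mu ,
$$
so the contact set $\{\f=\p\}\subset\{\f<\p+\delta\}$ is absorbed and the only uncontrolled set is the single level $\{\f=\p+\delta\}$. Second, these levels are pairwise disjoint as $\delta$ varies, so $\mu(\{\f=\p+\delta\})=0$ for all but countably many $\delta>0$; choosing $\delta_j\downarrow 0$ outside this countable exceptional set gives $H_m(\max(\f,\p+\delta_j))\geq\mu$ for every $j$, and Theorem \ref{thm: convergence E} applied to the decreasing sequence $\max(\f,\p+\delta_j)\downarrow\max(\f,\p)$, together with the fact that a weak limit of measures dominating $\mu$ still dominates $\mu$, concludes. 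With this, your preliminary truncation to bounded functions is unnecessary, since both the maximum principle and the convergence theorem are stated directly for $\EcXo$.
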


\begin{thm}

\label{thm: convergence E}
Let $(\f_j)$ be a monotone sequence of functions in $\EcXo$ converging to $\f\in \EcXo$. Then $H_m(\f_j)$ converges weakly to 
$H_m(\f)$.
\end{thm}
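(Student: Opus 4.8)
The plan is to reduce the assertion, by truncating the potentials, to the continuity of the Hessian operator along monotone sequences of \emph{bounded} $(\omega,m)$-subharmonic functions, the sole extra ingredient being that $\f$ has full Hessian mass, which makes the contribution of the ``deep'' regions $\{\f_j\leq -k\}$ uniformly negligible. Here ``monotone'' means that $(\f_j)$ is either non-increasing, or non-decreasing almost everywhere; I treat the two cases together. Since we have normalized $\int_X\omega^n=1$, every function in $\EcXo$ has total Hessian mass $1$, so $H_m(\f_j)$ and $H_m(\f)$ are probability measures on the compact space $X$, and it is enough to prove $\int_X g\,H_m(\f_j)\to\int_X g\,H_m(\f)$ for each $g\in\Cc(X)$.

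For $k\geq 1$ put $\f_j^{(k)}:=\max(\f_j,-k)$ and $\f^{(k)}:=\max(\f,-k)$; these are bounded $(\omega,m)$-sh functions, and $\f_j^{(k)}\to\f^{(k)}$ with the same monotonicity as $(\f_j)$. I record two facts. \emph{(a)} For each fixed $k$ and every bounded quasi-continuous $v$ one has $\int_X v\,H_m(\f_j^{(k)})\to\int_X v\,H_m(\f^{(k)})$ as $j\to\infty$; for non-decreasing sequences this is Lemma~\ref{lem: increase} applied with all $m$ arguments equal to $\f_j^{(k)}$, and for non-increasing sequences it is the analogous statement for decreasing sequences of bounded $(\omega,m)$-sh functions, established by the same capacity arguments in \cite{Lu13a},\cite{BT76} (though not recalled in the excerpt). \emph{(b)} By the construction of $H_m$ on $\EcXo$ as the increasing limit of $\ind_{\{\f_j>-l\}}H_m(\max(\f_j,-l))$, the measure $\ind_{\{\f_j>-k\}}H_m(\f_j)$ equals $\ind_{\{\f_j>-k\}}H_m(\f_j^{(k)})$, while $H_m(\f_j)$ and $H_m(\f_j^{(k)})$ have the same total mass $1$; hence their difference is supported on $\{\f_j\leq-k\}$ with vanishing total mass there, so $\bigl|\int_X g\,(H_m(\f_j)-H_m(\f_j^{(k)}))\bigr|\leq 2\|g\|_\infty\,\tau_j(k)$ with $\tau_j(k):=H_m(\f_j^{(k)})(\{\f_j\leq-k\})$, and likewise $\bigl|\int_X g\,(H_m(\f^{(k)})-H_m(\f))\bigr|\leq 2\|g\|_\infty\,\tau_\infty(k)$ with $\tau_\infty(k):=H_m(\f^{(k)})(\{\f\leq-k\})$.

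The heart of the matter is the tail estimate $\limsup_{j\to\infty}\tau_j(k)\leq\tau_\infty(k)$. In the non-increasing case $\f\leq\f_j$, so $\{\f>-k\}\subset\{\f_j>-k\}$ and $\tau_j(k)=1-H_m(\f_j^{(k)})(\{\f_j>-k\})\leq 1-H_m(\f_j^{(k)})(\{\f>-k\})$; approximating $\ind_{\{\f>-k\}}$ from below by the quasi-continuous functions $\min(1,\ell(\f^{(k)}+k))$ ($\ell\to\infty$) and invoking \emph{(a)} gives $\liminf_j H_m(\f_j^{(k)})(\{\f>-k\})\geq H_m(\f^{(k)})(\{\f>-k\})$, whence $\limsup_j\tau_j(k)\leq H_m(\f^{(k)})(\{\f\leq-k\})=\tau_\infty(k)$. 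In the non-decreasing case, for each fixed index $l$ one has $\{\f_l>-k\}\subset\{\f_j>-k\}$ for all $j\geq l$, and the same approximation (now through Lemma~\ref{lem: increase}) yields $\limsup_j\tau_j(k)\leq H_m(\f^{(k)})(\{\f_l\leq-k\})$; letting $l\to\infty$, using $\f_l\nearrow\f$ and that $H_m(\f^{(k)})$, being the Hessian of a bounded function, puts no mass on the $m$-negligible set $\{\sup_l\f_l<\f\}$ (Lemma~\ref{lem: negligible}), we again get $\limsup_j\tau_j(k)\leq H_m(\f^{(k)})(\{\f\leq-k\})=\tau_\infty(k)$. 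Finally, adding and subtracting $\int_X g\,H_m(\f_j^{(k)})$ and $\int_X g\,H_m(\f^{(k)})$ and using \emph{(b)}, \emph{(a)} (with $v=g$ continuous), and this tail estimate gives $\limsup_j\bigl|\int_X g\,H_m(\f_j)-\int_X g\,H_m(\f)\bigr|\leq 4\|g\|_\infty\,\tau_\infty(k)$ for every $k$; and $\tau_\infty(k)\to0$ as $k\to\infty$ precisely because $\f\in\EcXo$ (the characterization of this class recalled above). This proves the theorem.

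The only genuinely delicate point is the bookkeeping around the cutoffs: the indicators $\ind_{\{\f>-k\}}$ and $\ind_{\{\f_l>-k\}}$ are not continuous, which is why they must be approached from below by the quasi-continuous functions $\min(1,\ell(\,\cdot\,+k))$ before fact \emph{(a)} can be applied, and the level sets $\{\f_j=-k\}$ — which may carry positive Hessian mass — must be kept on the correct side of each inequality (this is exactly the role of \emph{(b)}). Apart from these routine points, the argument is the transcription to the $(\omega,m)$ setting of the monotone-convergence argument of \cite{GZ07} for $\omega$-plurisubharmonic functions.
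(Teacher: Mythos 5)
Your proof is correct. The paper itself gives no argument for this theorem (it is one of the results in Section 5 stated without proof, on the grounds that they ``can be extended without effort'' from \cite{GZ07}), and your truncation-plus-tail-estimate argument is precisely the standard monotone-continuity proof from \cite{GZ07} transcribed to the $(\omega,m)$ setting, with the delicate points (quasi-continuous approximation of the indicators, the locality of $H_m$ on $\{\f_j>-k\}$, and the negligibility of $\{\sup_l\f_l<\f\}$ in the increasing case) handled correctly.
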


\begin{prop}
The set $\EcXo$ is convex. It is stable under the max operation: if $\f,\p \in \SHXo$ are such that $\f\leq \p$ and $\f\in \EcXo$, then 
$\p\in \EcXo$.
\end{prop}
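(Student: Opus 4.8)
The plan is to carry over to the Hessian setting the two corresponding facts about the full-mass class $\mathcal{E}(X,\omega)$ of $\omega$-plurisubharmonic functions proved in \cite{GZ07}; the potential-theoretic ingredients they rest on — the comparison principle, the maximum principle (Proposition \ref{prop: maximum comparison}) and the monotone convergence theorem in $\EcXo$ (Theorem \ref{thm: convergence E}) — are now all available for $(\omega,m)$-sh functions. In both statements one may assume, after subtracting a constant (which alters neither $\SHXo$ nor the Hessian measure, and maps $\EcXo$ into itself), that all functions involved are $\le 0$.

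For \emph{convexity}, let $u,v\in\EcXo$ with $u,v\le 0$, $t\in[0,1]$, and put $w:=tu+(1-t)v\in\SHXo$ (which is $\le 0$). The key observation is that on the open set $\{w>-k\}$ the truncation $\max(w,-k)$ coincides with $tg+(1-t)h$ for suitable bounded $(\omega,m)$-sh functions $g,h$ that agree there with $u,v$, so by locality of the Hessian operator together with multilinearity,
$$
\int_{\{w>-k\}}H_m(\max(w,-k))=\sum_{\ell=0}^m\binom{m}{\ell}t^\ell(1-t)^{m-\ell}\int_{\{w>-k\}}(\omega+dd^c g)^\ell\wedge(\omega+dd^c h)^{m-\ell}\wedge\omega^{n-m}.
$$
Each mixed measure on the right has total mass $\int_X\omega^n=1$ by Stokes, and the complement $\{w\le -k\}$ is contained in $\{u\le -k\}\cup\{v\le -k\}$, whose mixed-Hessian mass tends to $0$ as $k\to\infty$: indeed $\bigcap_k\{u\le -k\}=\{u=-\infty\}$ is $m$-polar, the limiting (mixed) Hessian measures of the $\EcXo$ functions $u,v$ put no mass there, and the truncations converge weakly to them by Theorem \ref{thm: convergence E}, so the mass carried on the shrinking closed sets $\{u\le -k\},\{v\le -k\}$ vanishes. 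Letting $k\to\infty$ then gives $\int_XH_m(w)\ge (t+(1-t))^m=1$, hence $w\in\EcXo$.

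For \emph{stability}, let $\f\le\p$ with $\f\in\EcXo$ and $\p\le 0$ (so $\f\le 0$); set $\f_j:=\max(\f,-j)\le\p_j:=\max(\p,-j)$ and note $\{\p\le -j\}\subseteq\{\f\le -j\}$. By the characterisation of $\EcXo$ it suffices to show $\int_{\{\p\le -j\}}H_m(\p_j)\to 0$. On the open set $\{\p<-j\}\subseteq\{\f<-j\}$ both truncations are locally constant equal to $-j$, so $H_m(\p_j)=\omega^n$ there and the contribution is $\vol(\{\p<-j\})\to 0$; the remaining mass is carried by the level set $\{\p=-j\}$ and is controlled, as in \cite{GZ07}, by testing $H_m(\p_j)$ against a bounded $(\omega,m)$-sh barrier built from $\f_j$ (such as $\tfrac12\f_j-\tfrac j2$) and applying the comparison principle on a sublevel set, which dominates it by a constant multiple of $\int_{\{\f\le -cj\}}H_m(\f_{cj})\to 0$. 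Equivalently, this is the monotonicity of total Hessian mass under $\f\le\p$, which forces $\int_XH_m(\p)=1$.

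The hard part is precisely this last estimate — equivalently, the vanishing of the truncated mixed-Hessian masses on the sets $\{u\le -k\}\cup\{v\le -k\}$, respectively the control of $H_m(\p_j)$ on $\{\p=-j\}$ — which is where the regularisation theorem (Corollary \ref{cor: smooth approximation}) and the comparison/maximum principles genuinely enter. Once it is granted, everything else is the routine truncate-and-pass-to-the-limit bookkeeping of \cite{GZ07}.
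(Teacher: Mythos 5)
The paper itself gives no argument here: the proposition sits in the section where results are ``stated without proof'' as routine adaptations of \cite{GZ07}, so the only benchmark is the \cite{GZ07} template, which is indeed the route you take (multinomial expansion plus plurifine locality for convexity, a comparison-principle estimate for stability under max). The max-stability half is essentially sound, with one slip: in the fundamental inequality $\int_{\{u<-2s\}}H_m(v)\le 2^m\int_{\{u<-s\}}H_m(u)$ for bounded $u\le v\le 0$, the barrier must be built from the \emph{upper} function, i.e.\ $\tfrac12\p_j-\tfrac j2$ rather than $\tfrac12\f_j-\tfrac j2$; one needs $H_m$ of the barrier to dominate $2^{-m}H_m(\p_j)$ and the comparison principle to trade it for $H_m$ of a truncation of $\f$, whose mass on deep sublevel sets is small because $\f\in\EcXo$.

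The genuine gap is in the convexity half, at exactly the step you flag as ``the hard part''. You justify the vanishing of $\int_{\{u\le-k\}\cup\{v\le-k\}}(\omega+dd^cg)^\ell\wedge(\omega+dd^ch)^{m-\ell}\wedge\omega^{n-m}$ by a portmanteau argument: the limiting mixed measure charges no mass on the $m$-polar intersection, the truncated measures converge weakly to it, hence the mass on the ``shrinking closed sets'' vanishes. This fails on two counts. First, for an upper semicontinuous $u$ only $\{u\ge c\}$ is closed; $\{u\le-k\}$ need not be, and its closure can be all of $X$ when $\{u=-\infty\}$ is dense, so the bound $\limsup_k\mu_k(F)\le\mu(F)$ is simply not available for these sets. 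Second, the existence of a limiting mixed Hessian measure attached to two functions of $\EcXo$, and the fact that it puts no mass on $m$-polar sets, are themselves things to be proved: the paper only constructs the unmixed $H_m(\f)$, and Theorem \ref{thm: convergence E}, which you cite, concerns only the unmixed operator along monotone sequences. The correct mechanism is the same quantitative one you already use in the other half: a mixed fundamental inequality of the form
\begin{equation*}
\int_{\{\rho<-2s\}}(\omega+dd^cg)^\ell\wedge(\omega+dd^ch)^{m-\ell}\wedge\omega^{n-m}\;\le\;2^m\int_{\{\rho<-s\}}H_m(\max(\rho,-s)),
\end{equation*}
valid for a bounded $(\omega,m)$-sh floor $\rho\le g,h\le 0$, obtained by comparing against the barriers $\tfrac12 g-s$, $\tfrac12 h-s$ via the (partial) comparison principle; applied with floors built from truncations of $u$ and of $v$ it gives the required decay on $\{u\le-k\}$ and on $\{v\le-k\}$ separately, after which your bookkeeping goes through.
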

When $m=n$, the class $\Ec(X,\omega,n)$ is exactly $\Ec(X,\omega)$, the class of $\omega$-psh functions having full Monge-Amp\`ere mass, introduced and studied in \cite{GZ07}. 

One can follow the lines in \cite{Dinew09} to prove the "partial comparison principle":
\begin{lem}
Let $T$ be a positive current of type 
$$
T= (\omega+dd^c \phi_1)\wedge \cdots \wedge (\omega +dd^c \phi_{k})\wedge \omega^{n-m},\ \ k<m,
$$
where the $\phi_j$'s are functions in $\EcXo$. Let $u,v\in \EcXo$. Then
$$
\int_{\{u<v\}} (\omega +dd^c v)^{m-k} \wedge T \leq \int_{\{u<v\}} (\omega +dd^c u)^{m-k} \wedge T .
$$
\end{lem}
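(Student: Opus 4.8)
The plan is to follow the proof of the partial comparison principle in \cite{Dinew09}, transcribing it from complex Monge--Amp\`ere products to complex Hessian products. Beyond routine pluripotential manipulations only two ingredients are needed: the locality of the mixed Hessian operator in the plurifine topology, valid for $(\omega,m)$-sh functions exactly as in the case $m=n$ by the Bedford--Taylor type arguments of \cite{Lu13a}; and the convergence of mixed Hessian measures along the canonical truncations, which is contained in the results of this section.

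First I would reduce to the case in which $u$, $v$ and all the $\phi_j$ are bounded. Put $u^\ell:=\max(u,-\ell)$, $v^\ell:=\max(v,-\ell)$, $\phi_j^\ell:=\max(\phi_j,-\ell)$ and $T^\ell:=(\omega+dd^c\phi_1^\ell)\wedge\cdots\wedge(\omega+dd^c\phi_k^\ell)\wedge\omega^{n-m}$; granting the bounded case, one has the asserted inequality for $(u^\ell,v^\ell,\phi_j^\ell,T^\ell)$. One checks that $\{u^\ell<v^\ell\}=\{u<v\}\cap\{v>-\ell\}$, an increasing family whose union is $\{u<v\}\setminus\{v=-\infty\}$, and $\{v=-\infty\}$ is $m$-polar. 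Since $u,v$ and the $\phi_j$ are quasi-continuous, $\ind_{\{u<v\}}$ is quasi-continuous; since the mixed Hessian measures of functions in $\EcXo$ do not charge $m$-polar sets; and since these measures converge along the truncations and may be tested against the quasi-continuous cutoff $\ind_{\{u<v\}}$ (cf. Theorem \ref{thm: convergence E} and the results of \cite{Lu13a}), letting $\ell\to+\infty$ in both sides recovers the inequality for the original, possibly unbounded, data.

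For bounded $u,v,\phi_j$ I would first treat the basic case $m-k=1$, that is,
$$\int_{\{u<v\}}(\omega+dd^c v)\wedge S\ \le\ \int_{\{u<v\}}(\omega+dd^c u)\wedge S$$
for a positive closed mixed current $S=(\omega+dd^c\psi_1)\wedge\cdots\wedge(\omega+dd^c\psi_{m-1})\wedge\omega^{n-m}$ with bounded $(\omega,m)$-sh potentials $\psi_i$. The difference of the two sides is $\int_{\{u<v\}}dd^c(v-u)\wedge S$. Setting $w:=\max(u,v)$, one has $v=w$ on the plurifine-open set $\{u<v\}$, so by plurifine-locality this integral equals $\int_{\{u<v\}}dd^c(w-u)\wedge S$; since $S$ is closed and $w-u$ bounded, $\int_X dd^c(w-u)\wedge S=0$, and $w=u$ on the plurifine-open set $\{u>v\}$, so $dd^c(w-u)\wedge S=0$ there. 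Hence $\int_{\{u<v\}}dd^c(w-u)\wedge S=-\int_{\{u=v\}}\big((\omega+dd^cw)-(\omega+dd^cu)\big)\wedge S\le 0$, the sign coming from the standard fact that $(\omega+dd^cw)\wedge S\ge(\omega+dd^cu)\wedge S$ on $\{u=v\}$ (alternatively, one runs the computation with $v$ replaced by $v-\varepsilon$ and lets $\varepsilon\downarrow0$, as in \cite{Dinew09}). The general bounded case then follows by iterating the basic case: applying it successively to $S=(\omega+dd^cv)^{j}\wedge(\omega+dd^cu)^{m-k-1-j}\wedge T$ for $j=m-k-1,\dots,0$ --- each such $S$ being a positive closed Bedford--Taylor mixed current with $m-1$ potential factors --- replaces the factors $(\omega+dd^cv)$ by $(\omega+dd^cu)$ one at a time over $\{u<v\}$.

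The computations in the bounded case are classical; the real work is the reduction step, where the set-convergence $\{u^\ell<v^\ell\}\to\{u<v\}$ of the domains of integration has to be matched with the a priori only weak convergence of the mixed Hessian measures. This is exactly where quasi-continuity of the cutoff $\ind_{\{u<v\}}$ and the absence of mass on $m$-polar sets are used in an essential way, and I expect this measure-theoretic bookkeeping, rather than any new idea, to be the main obstacle.
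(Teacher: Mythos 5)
Your proposal is correct and follows the same route the paper intends: the paper gives no proof of this lemma at all, only the pointer to \cite{Dinew09}, and your argument is precisely the transcription of Dinew's partial comparison principle (truncation to the bounded case, plurifine locality, Stokes for bounded potentials, and the convergence theorems for $\EcXo$) that the authors have in mind. The only point to tighten is that $\ind_{\{u<v\}}$ is quasi-\emph{lower semicontinuous} rather than quasi-continuous, so the limit in $\ell$ (and the treatment of the contact set $\{u=v\}$) is most safely run through the shifted sets $\{u<v-\vep\}\subset\{u\le v-\vep\}$ and a monotone limit in $\vep$, exactly as in the parenthetical alternative you already mention.
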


\begin{thm}
\label{thm: EcXn subset EcXm}
$\Ec(X,\omega,n) \subset \Ec(X,\omega,n-1)\subset \cdots \subset  \Ec(X,\omega,1)$.
\end{thm}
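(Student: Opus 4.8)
The plan is to show the chain of inclusions $\Ec(X,\omega,n)\subset\Ec(X,\omega,n-1)\subset\cdots\subset\Ec(X,\omega,1)$ one step at a time, i.e. to prove that $\Ec(X,\omega,m)\subset\Ec(X,\omega,m-1)$ for every $m$ with $2\leq m\leq n$; the full statement then follows by transitivity. First I would observe that any $(\omega,m)$-sh function is automatically $(\omega,m-1)$-sh, so the only issue is the mass: given $\f\in\Ec(X,\omega,m)$ we must check $\int_X H_{m-1}(\f)=1$, where $H_{m-1}(\f)=(\omega+dd^c\f)^{m-1}\wedge\omega^{n-m+1}$ is defined via the canonical approximation $\f_j:=\max(\f,-j)$ exactly as in the definition of the Hessian operator on unbounded functions. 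By Stokes' theorem the bounded truncations always have full mass, $\int_X H_{m-1}(\f_j)=\int_X\omega^n=1$ and $\int_X H_m(\f_j)=1$, so the content of the statement is that no mass escapes to the polar set $\{\f=-\infty\}$ at the level $m-1$ once we know it does not escape at level $m$.

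The key step is a localization/comparison estimate controlling the Hessian mass of $\f_j$ on the sublevel set $\{\f\le -j\}$. Concretely, I would prove that there is a constant $C$ (depending only on $n$) with
\begin{equation*}
\int_{\{\f\le -j\}} H_{m-1}(\f_j)\;\le\; C\int_{\{\f\le -j/2\}} H_m(\f_{j}) \;+\; o(1),
\end{equation*}
or some variant of this comparison, by running the partial comparison principle (the Lemma stated just before this Theorem, following \cite{Dinew09}) together with the standard Cegrell-type inequality $\int_{\{u<-s\}}(\omega+dd^c u)^{m-1}\wedge\omega^{n-m+1}\le$ (something controlled by) $\int_{\{u<-s/2\}}(\omega+dd^c u)^{m}\wedge\omega^{n-m}$ for bounded $u$. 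The mechanism is the usual one: on $\{\f\le -j\}$ one compares $\f_j$ with a function like $\tfrac12\f_{j}+\text{const}$ (which is $(\omega,m)$-sh since it is a convex combination, as $\omega+dd^c(\tfrac12\f_j)=\tfrac12(\omega+dd^c\f_j)+\tfrac12\omega$ is $m$-positive), uses the partial comparison principle to trade one factor $(\omega+dd^c\f_j)$ for $\omega$ on the relevant sublevel set, and iterates. Since $\f\in\Ec(X,\omega,m)$ means precisely $\int_{\{\f\le -j\}}H_m(\f_j)\to 0$, the right-hand side tends to $0$, and the Lemma characterizing membership in $\EcXo$ via $\lim_j\int_{\{u\le -j\}}H_{m-1}(\max(u,-j))=0$ then gives $\f\in\Ec(X,\omega,m-1)$. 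The Borel-measure convergence in that Lemma simultaneously identifies the limit mass as the correct $H_{m-1}(\f)$, so full mass at level $m-1$ follows.

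An alternative, perhaps cleaner route: first reduce to $\f\le 0$ by subtracting a constant, then use the preceding Lemma producing a convex increasing weight $\chi$ with $\chi(-\infty)=0$ (I would actually want $\chi(-\infty)=-\infty$ here — rephrasing, a weight witnessing $\f\in\Ec_\chi(X,\omega,m)$) and exploit that finiteness of $\int_X\chi\circ\f\,H_m(\f)$ forces, by a partial-comparison integration by parts, finiteness of $\int_X\chi\circ\f\,H_{m-1}(\f)$; a finite weighted energy at level $m-1$ with $\chi(-\infty)=-\infty$ is incompatible with loss of mass, giving $\f\in\Ec(X,\omega,m-1)$. The main obstacle I anticipate is making the partial comparison principle (which as stated requires all potentials, including $u$ and $v$, to lie in $\EcXo$ and involves currents $T$ built from functions in $\EcXo$) bear on the truncations $\f_j$ in a way that produces uniform-in-$j$ constants and a genuinely vanishing right-hand side; one must be careful that the current $T$ appearing when one factor is replaced by $\omega$ still has the structure demanded by the Lemma, and that the comparison is performed on sublevel sets of the form $\{\f<v\}$ for an admissible competitor $v$, not on an arbitrary set. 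Once the mass-loss estimate is set up correctly this is routine, mirroring the $m=n$ case treated in \cite{GZ07} and the Hessian adaptation in \cite{Dinew09}.
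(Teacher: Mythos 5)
Your plan is essentially the paper's proof: both reduce the statement to showing that no Hessian mass at level $p<m$ escapes to the polar set, and both extract this from the partial comparison principle applied to the canonical truncations $\f_j=\max(\f,-j)$. The one substantive difference is in execution: you anticipate a delicate localization estimate (comparison with $\tfrac12\f_j$, a Cegrell-type doubling inequality, uniform-in-$j$ constants, iteration), whereas the paper takes the simplest possible competitors in the partial comparison principle, namely the constant $u\equiv -j$ against $v=\f_j$ with $T=(\omega+dd^c\f_j)^{p}\wedge\omega^{n-m}$; this yields
$\int_{\{\f_j>-j\}}H_m(\f_j)\le\int_{\{\f_j>-j\}}(\omega+dd^c\f_j)^{p}\wedge\omega^{n-p}$
directly, and since the left-hand side tends to $1$ while both measures have total mass at most $1$, membership in $\Ec(X,\omega,p)$ follows with constant $1$, no error term, and no iteration. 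So the obstacle you flag (getting uniform constants and a vanishing right-hand side out of the partial comparison principle) dissolves once the competitor is chosen to be constant; apart from that your argument is the paper's.
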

\begin{proof}
Fix $p<m$ and $\f \in \Ec(X,\omega,m)$. Let $\f_j:=\max(\f,-j)$ be the canonical approximation sequence of $\f$. We are to prove that 
$$
\int_{\{\f>-j\}} H_{m-1}(\f_j) \longrightarrow 1.
$$
From the partial comparison principle above we get 
$$
\int_{\{\f_j>-j\}} (\omega +dd^c \f_j)^{p}\wedge \omega^{m-p} \wedge \omega^{n-m} \geq \int_{\{\f_j>-j\}} (\omega +dd^c \f_j)^{p}\wedge (\omega+dd^c \f_j)^{m-p} \wedge \omega^{n-m}.
$$
From this and since $\f\in \Ec(X,\omega,m)$ we get the conclusion.
\end{proof}

\begin{exa}
Let $z$ be a local coordinate of $X$ and consider 
$$
\f := \vep \theta \log |z| , 
$$
where $\theta$ is a cut-off function and $\vep>0$ is a very small constant  so that $\f\in \SHXo$. Then $\f\in \EcXo$ for any $m<n$
but $\f\notin \Ec(X,\omega,n)$.
\end{exa}

\section{The variational method}
The variational method has first introduced in 
\cite{BBGZ13} to solve degenerate complex Monge-Amp\`ere equations 
on compact K\"ahler manifolds. A local version of this approach has been developed in \cite{ACC10}. 

Due to some similar structure one expects that this method can also be applied for the complex Hessian equation. In the local setting with a standard K\"ahler metric  the first-named author   \cite{Lu13c} has used  this method to solve  degenerate complex Hessian equations in 
$m$-hyperconvex domains of $\C^n$. To make it 
available for the compact setting the principal steps are: first to smoothly regularize
singular $(\omega,m)$-sh functions and  then to prove an othorgonal relation. Both of them have been proved in Section 3. In the sequel we briefly recall the techniques of \cite{BBGZ13}. Most of the proof will be omitted due to similarity and repetition.
\subsection{The energy functional}
\begin{defi}
Let $\f$ be a bounded $(\omega,m)$-sh function on $X$. We define
$$
E(\f) := \frac{1}{m+1} \sum_{k=0}^{m} \int_X \f (\omega +dd^c\f)^k \wedge \omega^{n-k}
$$
to be the energy of $\f$.
For any  $u\in \SHXo$ the energy of $u$ is defined by 
$$
E(u) := \inf \left\{ E(\f) \setdef \f \in \SHXo\cap L^{\infty}(X) , \ u\leq \f \right\}.
$$
\end{defi}

\begin{lem}
For any $\f \in \Ect$ such that $\f\leq 0$ we have 
$$
\int_X \f H_m(\f) \leq E(\f) \leq \frac{1}{m+1} \int_X \f H_m(\f).
$$
The class $\Ect$ consists of finite energy $(\omega,m)$-subharmonic functions. If $(\f_j)$ is a sequence in $\Ect$ decreasing to $\f$ such that 
$$
\inf_j E(\f_j) >-\infty
$$
then $\f\in \Ect$ and $E(\f) = \lim_{j \to +\infty} E(\f_j)$
\end{lem}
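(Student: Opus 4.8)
The plan is to prove the two inequalities first for bounded $\f$ via an integration by parts, and then to transfer both the inequalities and the two structural statements to $\Ect$ and to decreasing limits through the canonical approximants $\f_j:=\max(\f,-j)$, following the scheme of \cite{GZ07,BBGZ13}. For the bounded case, let $\f\in\SHXo\cap L^\infty(X)$ with $\f\le 0$ and set $a_k:=\int_X\f\,(\omega+dd^c\f)^k\wedge\omega^{n-k}$, so that $E(\f)=\tfrac{1}{m+1}\sum_{k=0}^{m}a_k$ and $a_m=\int_X\f H_m(\f)$. Approximating $\f$ from above by smooth $(\omega,m)$-sh functions (Corollary~\ref{cor: smooth approximation}) and using continuity of mixed Hessian operators along monotone sequences (cf.~Lemma~\ref{lem: increase} and \cite{Lu13a}) to justify the integration by parts, one obtains for $0\le k\le m-1$
$$a_k-a_{k+1}=-\int_X\f\,dd^c\f\wedge(\omega+dd^c\f)^k\wedge\omega^{n-k-1}=\int_X d\f\wedge d^c\f\wedge(\omega+dd^c\f)^k\wedge\omega^{n-k-1}\ \ge\ 0,$$
since $d\f\wedge d^c\f\ge 0$ and $(\omega+dd^c\f)^k\wedge\omega^{n-k-1}$ is an $m$-positive current for $k\le m-1$. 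Hence $a_0\ge a_1\ge\cdots\ge a_m$ while all $a_k\le 0$, so $(m+1)a_m\le\sum_{k}a_k\le a_m$, which is exactly $\int_X\f H_m(\f)\le E(\f)\le\tfrac{1}{m+1}\int_X\f H_m(\f)$. Since $\int_X(\omega+dd^c\f)^k\wedge\omega^{n-k}=\int_X\omega^n=1$ for $k\le m$, adding a constant shifts $\f$ and $E(\f)$ by the same amount, so the hypothesis $\f\le 0$ costs nothing.

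Next I would prove $\Ect=\{u\in\SHXo:E(u)>-\infty\}$ up to an additive constant. Recall $E$ is monotone on bounded $(\omega,m)$-sh functions: along $u_t=(1-t)u+tv$ one has $\frac{d}{dt}E(u_t)=\int_X(v-u)H_m(u_t)\ge0$ when $u\le v$, so for $u\le0$ the definition of $E$ gives $E(u)=\inf_j E(u_j)=\lim_j E(u_j)$ along the non-increasing sequence $u_j:=\max(u,-j)\searrow u$. If $u\in\Ect$, then $u\in\EcXo$ and $\int_X|u|H_m(u)<\infty$, so the lemma above with $h(t)=t$ gives $\sup_j\int_X|u_j|H_m(u_j)<\infty$, and the bounded case yields $E(u)=\lim_j E(u_j)\ge\liminf_j\int_X u_j H_m(u_j)>-\infty$. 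Conversely, if $E(u)>-\infty$ (and $u\le0$), the bounded case gives the uniform bound $\int_X|u_j|H_m(u_j)\le-(m+1)E(u_j)\le-(m+1)E(u)$; since $u_j\equiv-j$ on $\{u\le-j\}$, this forces $\int_{\{u\le-j\}}H_m(u_j)\le\tfrac{1}{j}\big(-(m+1)E(u)\big)\to0$, hence $u\in\EcXo$ by the characterization recalled above; then $H_m(u_j)\to H_m(u)$ on every Borel set, and since $\min(|u_j|,N)=\min(|u|,N)$ pointwise for $j>N$, passing to the limit gives $\int_X\min(|u|,N)H_m(u)\le-(m+1)E(u)$ for all $N$, so $\int_X|u|H_m(u)<\infty$ by monotone convergence and $u\in\Ect$. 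Passing to the limit along $\f_j$ also extends the inequalities of the first step to every $\f\in\Ect$.

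For the decreasing-limit statement, let $(\f_j)\subset\Ect$ decrease to $\f$ with $\ell:=\inf_j E(\f_j)>-\infty$; normalizing, $\f_j\le0$, hence $\f\le0$. From the definition of $E$, $\{g\ge\f_j\}\subset\{g\ge\f\}$ gives $E(\f)\le E(\f_j)$ for all $j$, so $E(\f)\le\ell$. For the reverse bound, fix $N$ and set $\f^{(N)}:=\max(\f,-N)$, $\f_j^{(N)}:=\max(\f_j,-N)$; for fixed $N$, $\f_j^{(N)}\searrow\f^{(N)}$ is uniformly bounded, so continuity of $E$ along bounded decreasing sequences gives $E(\f^{(N)})=\lim_j E(\f_j^{(N)})\ge\lim_j E(\f_j)=\ell$, using $E(\f_j^{(N)})\ge E(\f_j)$. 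Letting $N\to\infty$ and invoking the previous step, $E(\f)=\inf_N E(\f^{(N)})\ge\ell$, hence $E(\f)=\ell=\lim_j E(\f_j)>-\infty$, and the previous step then gives $\f\in\Ect$.

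I expect the main obstacle to be the transfer from bounded to unbounded functions in the last two steps: one must control the Hessian measures $H_m(\f_j)$ as they themselves vary while passing to the limit. This rests on the Borel-measure convergence $H_m(\max(\f,-j))\to H_m(\f)$, on the characterization of $\EcXo$ through $\lim_j\int_{\{u\le-j\}}H_m(\max(u,-j))=0$, and on the monotonicity and decreasing-sequence continuity of $E$ on bounded functions, all of which are furnished by the potential theory recalled above (cf.~Lemma~\ref{lem: increase}, Theorem~\ref{thm: convergence E}) and by \cite{Lu13a,BBGZ13}. The remaining content is the pure rearrangement of the terms $a_k$.
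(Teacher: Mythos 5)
Your proof is correct and is precisely the adaptation of \cite{GZ07} and \cite{BBGZ13} that the paper intends: the paper omits the argument entirely, deferring to those references, and your three steps (monotonicity of the $a_k$ via integration by parts for bounded potentials, transfer to $\Ect$ through the canonical cut-offs $\max(\f,-j)$, and the sandwich $E(\f)\leq \ell \leq \inf_N E(\max(\f,-N))$ for decreasing limits) are exactly the standard route. The supporting facts you invoke (positivity of $d\f\wedge d^c\f$ against $m$-positive currents, Borel convergence of $H_m(\max(\f,-j))$, and the characterization of $\EcXo$) are all available in the paper's potential-theoretic toolkit, so there is no gap.
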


\begin{lem}\label{lem: primitive}
The functional $E$ is a primitive of the complex Hessian operator. More precisely, whenever $\f+tv$ belongs to $\Ect$ for small $t$,
$$
\frac{dE(\f+tv)}{dt}\vert_{t=0} = \int_X v H_m(\f).
$$
The functional $E$ is concave increasing, satisfies $E(\f + c) = E(\f) + c$ for
all $c \in \R, \f\in \Ect$ , and the cocycle condition
$$
E(\f) - E(\p) = \frac{1}{m+1} \sum_{j=0}^m \int_X (\f-\p) (\omega
+dd^c \f)^j \wedge (\omega+dd^c \p)^{m-j}\wedge \omega^{n-m},
$$
for all $\f,\p \in \Ect$. Moreover,
$$
\int_X (\f-\p) H_m(\f) \leq E(\f) - E(\p) \leq \int_X (\f-\p) H_m(\p).
$$
\end{lem}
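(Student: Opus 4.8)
The plan is to prove all the assertions first for \emph{bounded} $(\omega,m)$-sh functions, where everything reduces to integration by parts exactly as for the Aubin--Mabuchi energy in the complex Monge--Amp\`ere case, and then to remove the boundedness assumption by truncation.

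First I would establish the basic differentiation formula along affine paths of bounded potentials. If $\f_t$ is affine in $t$ with $\dot\f_t \equiv v$ and $\omega_{\f_t} := \omega + dd^c\f_t$, then differentiating $E(\f_t) = \frac{1}{m+1}\sum_{k=0}^m\intX\f_t\,\omega_{\f_t}^k\wedge\omega^{n-k}$, integrating by parts each contribution $\frac{k}{m+1}\intX\f_t\,dd^cv\wedge\omega_{\f_t}^{k-1}\wedge\omega^{n-k} = \frac{k}{m+1}\intX v(\omega_{\f_t}-\omega)\wedge\omega_{\f_t}^{k-1}\wedge\omega^{n-k}$, and telescoping the resulting sum yields $\frac{d}{dt}E(\f_t) = \intX v\,H_m(\f_t)$. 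Differentiating once more gives $\frac{d^2}{dt^2}E(\f_t) = -m\intX dv\wedge d^cv\wedge\omega_{\f_t}^{m-1}\wedge\omega^{n-m}\le 0$, since $dv\wedge d^cv$ is a non-negative (hence $m$-positive) $(1,1)$-form and $\omega_{\f_t}^{m-1}\wedge\omega^{n-m}$ is an $m$-positive current. From this single formula all the bounded-case statements drop out: applying it along $\f_t := (1-t)\p+t\f$, $t\in[0,1]$, using $\omega_{\f_t}=(1-t)\omega_\p+t\omega_\f$ and the Beta-integral identity $\binom{m}{j}\int_0^1 t^j(1-t)^{m-j}\,dt=\frac{1}{m+1}$, and integrating from $0$ to $1$, gives the cocycle relation; monotonicity and concavity come from the signs of the first and second derivatives; $E(\f+c)=E(\f)+c$ follows from $\intX\omega_\f^k\wedge\omega^{n-k}=\intX\omega^n=1$ by Stokes' theorem; and, writing $a_j:=\intX(\f-\p)\,\omega_\f^j\wedge\omega_\p^{m-j}\wedge\omega^{n-m}$, the identity $a_{j+1}-a_j=-\intX d(\f-\p)\wedge d^c(\f-\p)\wedge\omega_\f^j\wedge\omega_\p^{m-j-1}\wedge\omega^{n-m}\le 0$ shows $a_0\ge\cdots\ge a_m$, so that $E(\f)-E(\p)=\frac{1}{m+1}\sum_j a_j$ lies between $a_m=\intX(\f-\p)H_m(\f)$ and $a_0=\intX(\f-\p)H_m(\p)$.

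Next I would remove the boundedness assumption. Given $\f,\p\in\Ect$, set $\f_j:=\max(\f,-j)$ and $\p_j:=\max(\p,-j)$; these decrease to $\f,\p$ and, by the preceding lemma, $E(\f_j)\to E(\f)$ and $E(\p_j)\to E(\p)$. Using Theorem~\ref{thm: convergence E} (weak convergence of Hessian measures along monotone sequences in $\EcXo$) together with quasi-continuity of $(\omega,m)$-sh functions and the standard convergence results for mixed Hessian products as in \cite{GZ07}, I would pass to the limit in the right-hand sides of the cocycle relation and of the two-sided estimate obtained for $\f_j,\p_j$; translation invariance, monotonicity and concavity pass to the limit with no work. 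Finally, the differentiability statement follows from the $\Ect$-cocycle: if $\f+tv\in\Ect$ for $t$ near $0$, then $E(\f+tv)-E(\f)=\frac{t}{m+1}\sum_{j=0}^m\intX v\,\omega_{\f+tv}^j\wedge\omega_\f^{m-j}\wedge\omega^{n-m}$, so dividing by $t$ and letting $t\to0$ gives $\frac{dE(\f+tv)}{dt}|_{t=0}=\intX v\,H_m(\f)$.

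I expect the main obstacle to be this limiting step: making sense of and uniformly controlling the mixed terms $\intX(\f-\p)\,\omega_\f^j\wedge\omega_\p^{m-j}\wedge\omega^{n-m}$ when $\f,\p$ are unbounded finite-energy functions, and verifying that their monotonicity in the truncation level $j$ is preserved so that the passage to the limit is legitimate. This is handled exactly as in \cite{GZ07} and \cite{BBGZ13}, integrating over the super-level sets $\{\f>-j\}\cap\{\p>-j\}$ and exploiting positivity of $d(\f_j-\p_j)\wedge d^c(\f_j-\p_j)\wedge T$ for the relevant positive currents $T$; since the argument is a verbatim adaptation of the Monge--Amp\`ere case I would only indicate it.
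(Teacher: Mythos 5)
Your proposal is correct and follows exactly the route the paper intends: the paper's own ``proof'' is the single line that the lemma is a trivial adaptation of \cite{BBGZ13}, and what you write out (telescoping the integration by parts to get $\frac{d}{dt}E(\f_t)=\int_X v\,H_m(\f_t)$, the Beta-integral identity for the cocycle formula, the monotone chain $a_0\ge\cdots\ge a_m$ for the two-sided estimate, and truncation plus Theorem \ref{thm: convergence E} to pass from bounded potentials to $\Ect$) is precisely that adaptation. No gaps; the limiting step you flag is indeed the only point needing care and is handled as in \cite{GZ07} and \cite{BBGZ13}, as you say.
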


\begin{proof}
The proof is a trivial adaptation of \cite{BBGZ13}. 
\end{proof}

\begin{lem}
The functional $E$ is upper semicontinuous with respect to the $L^1$
topology on $\SHXo$.
\end{lem}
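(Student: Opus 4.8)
The plan is to derive the upper semicontinuity of $E$ from two facts already at our disposal: the monotonicity of $E$ (if $u\le v$ in $\SHXo$ then $E(u)\le E(v)$, immediate from the infimum defining $E$ and, on bounded functions, from the inequality $E(u)-E(v)\le\int_X(u-v)H_m(v)\le 0$ contained in Lemma~\ref{lem: primitive}), and the continuity of $E$ along decreasing sequences in $\Ect$ (the last assertion of the lemma characterizing $\Ect$). The bridge between the two is the classical upper-envelope trick.

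Concretely, given $(u_j)\subset\SHXo$ with $u_j\to u$ in $L^1(X)$, I would first observe that the suprema $\sup_X u_j$ are uniformly bounded: this follows from Hartogs' lemma together with $\int_X u_j\,\omega^n\to\int_X u\,\omega^n$. Then set $v_j:=\bigl(\sup_{k\ge j}u_k\bigr)^*$. This is a non-increasing sequence of $(\omega,m)$-sh functions, uniformly bounded above, decreasing to some $v\in\SHXo$. Since for each $j$ one has $v_j=\sup_{k\ge j}u_k$ off an $m$-polar set, while $\sup_{k\ge j}u_k\downarrow\limsup_k u_k$ pointwise, and since $L^1$-convergence forces $\limsup_k u_k=u$ off a Lebesgue-negligible set, one gets $v=u$ almost everywhere, hence $v=u$ everywhere by the maximality axiom for $(\omega,m)$-sh functions. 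Thus $v_j\downarrow u$ with $v_j\ge u_j$, so monotonicity gives $E(v_j)\ge E(u_j)$ and therefore
$$\limsup_{j\to+\infty}E(u_j)\le\lim_{j\to+\infty}E(v_j),$$
the right-hand limit existing in $[-\infty,+\infty)$ because $(E(v_j))$ is non-increasing.

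To conclude I would split into cases. If $\lim_j E(v_j)=-\infty$, the inequality $\limsup_j E(u_j)\le E(u)$ is automatic. Otherwise $\inf_j E(v_j)>-\infty$, so every $v_j$ has finite energy and lies in $\Ect$; the decreasing-continuity property of $E$ on $\Ect$ then gives $u\in\Ect$ and $E(u)=\lim_j E(v_j)$, and again $\limsup_j E(u_j)\le E(u)$. This proves upper semicontinuity for sequences, which suffices since the $L^1$-topology on $\SHXo$ is metrizable.

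The only step that is not purely formal is the identity $v_j\downarrow u$, i.e. that the $\limsup$ of an $L^1$-convergent sequence of $(\omega,m)$-sh functions agrees with its limit outside an $m$-polar set; this is the $(\omega,m)$-analogue of a standard Hartogs-type statement and follows from the negligibility results (Lemma~\ref{lem: negligible}) and Hartogs' lemma recalled earlier. Everything else is an unwinding of monotonicity and of the decreasing-continuity lemma, exactly as in \cite{BBGZ13}.
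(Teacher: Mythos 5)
Your proposal is correct and follows essentially the same route as the paper: replace $u_j$ by the decreasing upper envelopes $v_j=(\sup_{k\ge j}u_k)^*\downarrow u$, use monotonicity of $E$ to get $E(v_j)\ge E(u_j)$, and conclude via the continuity of $E$ along decreasing sequences with bounded energy. Your write-up is in fact slightly more careful than the paper's (which asserts $\psi_j\downarrow\varphi$ without comment and handles the trivial case by assuming the limsup is finite at the outset), but there is no substantive difference in the argument.
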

\begin{proof}
Assume that $(\f_j)$ is a sequence in $\SHXo$ converging to $\f\in \SHXo$ in $L^1$. We are to prove that 
$$
\limsup_{j\to+\infty} E(\f_j) \leq E(\f).
$$
If the limsup is $-\infty$ there is nothing to do. Thus we can assume that $E(\f_j)$ is uniformly bounded from below. Then since
$$
E(\f_j) \leq  \int_X \f_j \omega^n
$$
the sequence $(\f_j)$ stays in a compact subsets of $\SHXo$.  Assume that $\f_j\rightarrow \f \in \SHXo$ in $L^1(X)$. Set
$$
\p_j:=(\sup_{k\geq j} \f_k)^*.
$$
Then $\p_j$ decreases to $\f$. Since $E$ is increasing we get a uniform lower bound for $E(\p_j)$. Thus $\f$ belongs to $\EcXo$ and
$$
E(\f) = \lim_{j\to+ \infty} E(\p_j) \geq \limsup_{j\to+\infty} E(\f_j).
$$
\end{proof}

\begin{lem}
For each $C>0$ the set
$$
\EctC := \{\f\in \Ect\setdef \sup_X \f \leq 0 , \ E(\f) \geq -C\}
$$
is a compact convex subset of $\SHXo$.
\end{lem}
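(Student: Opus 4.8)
The plan is to read off both properties from the machinery already in place: the monotonicity and concavity of $E$, the elementary inequalities $\sup_X\f\ge \int_X\f\,\omega^n\ge E(\f)$ (recall $\int_X\omega^n=1$), the stability of $\Ect$ and of $\SHXo$ under decreasing sequences with bounded energy, and the upper semicontinuity of $E$ for the $L^1$ topology proved just above. Throughout, $\SHXo$ is equipped with the $L^1(X)$ topology.

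\emph{Convexity.} Given $\f_0,\f_1\in\EctC$ and $t\in[0,1]$, put $\f_t:=(1-t)\f_0+t\f_1$; since the cone of $m$-positive currents is convex, $\f_t\in\SHXo$, and obviously $\sup_X\f_t\le 0$. To check $\f_t\in\Ect$ with $E(\f_t)\ge -C$ I would truncate: set $\f_t^{(j)}:=(1-t)\max(\f_0,-j)+t\max(\f_1,-j)$, a bounded $(\omega,m)$-sh function decreasing to $\f_t$ as $j\to+\infty$. By concavity of $E$ together with the fact that $E$ is increasing,
$$
E(\f_t^{(j)})\ \ge\ (1-t)E(\max(\f_0,-j))+tE(\max(\f_1,-j))\ \ge\ (1-t)E(\f_0)+tE(\f_1)\ \ge\ -C.
$$
Hence $\inf_j E(\f_t^{(j)})>-\infty$, so by the lemma on decreasing sequences of bounded energy $\f_t\in\Ect$ and $E(\f_t)=\lim_j E(\f_t^{(j)})\ge -C$; thus $\f_t\in\EctC$.

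\emph{Compactness.} First I would record the a priori bound: for $\f\in\EctC$,
$$
\sup_X\f\ \ge\ \int_X\f\,\omega^n\ \ge\ E(\f)\ \ge\ -C,
$$
so $-C\le \sup_X\f\le 0$. Consequently $\EctC$ sits inside the family of $(\omega,m)$-sh functions whose supremum lies in $[-C,0]$, which is relatively compact in $L^1(X)$ by the usual compactness property of quasi-subharmonic functions (Hartogs' lemma, as used in \cite{Lu13a}), and any $L^1$-limit of such a sequence again belongs to $\SHXo$. It then remains to prove that $\EctC$ is closed in $L^1$. If $\f_j\in\EctC$ and $\f_j\to\f$ in $L^1(X)$, then $\f\le 0$ almost everywhere, hence everywhere by upper semicontinuity, so $\sup_X\f\le 0$; and by upper semicontinuity of $E$, $E(\f)\ge\limsup_j E(\f_j)\ge -C>-\infty$, so $\f\in\Ect$ with $E(\f)\ge -C$. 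Thus $\f\in\EctC$, and a closed subset of a relatively compact set is compact.

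The argument is essentially a routine transcription of the corresponding fact in \cite{BBGZ13} (and of the treatment of $\mathcal{E}^1$ in \cite{GZ07}), so I do not anticipate a genuine difficulty; the one point demanding a little care is the passage to the limit in the closedness step, and this is precisely what the upper semicontinuity of $E$ is designed to supply, together with the trivial bound $\sup_X\f\ge E(\f)$ which keeps the functions from escaping to $-\infty$.
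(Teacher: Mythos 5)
Your proof is correct and follows exactly the same route as the paper, whose entire argument is the two sentences ``The convexity of $\EctC$ follows from the concavity of $E$; the compactness follows from the upper semicontinuity of $E$.'' You have simply supplied the routine details (truncation for convexity, the bound $-C\le E(\f)\le\int_X\f\,\omega^n\le\sup_X\f\le 0$ and $L^1$-closedness for compactness), all of which check out.
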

\begin{proof}
The convexity of $\EctC$ follows from the concavity of $E$. The compactness follows from the upper semicontinuity of $E$.  
\end{proof}

The following volume-capacity estimate is due to Dinew and Ko{\l}odziej \cite{DK11}:
\begin{lem}
\label{lem: vol-cap DK}
Let $1<p<\frac{n}{n-m}.$ There exists a constant $C=C(p,\omega)$ such that
for every Borel subset $K$ of $X$, we have
$$
V(K)\leq C\cdot \Capm(K)^p,
$$
where $V(K):=\int_K\omega^n$.
\end{lem}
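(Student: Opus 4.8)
The idea is to extract the exponent $p$ from the Sobolev integrability of $m$-subharmonic functions, the endpoint $p=1$ being essentially free. I would first reduce to $K$ compact: $V$ is inner regular (it is a Radon measure on a compact metric space), and by Theorem~\ref{thm: cap formula} so is $\Capm$ on Borel sets, while the case $K=X$ is trivial after the normalisation $\int_X\omega^n=1$. Let $h$ be the relative $m$-extremal function of $K$ and $u:=h^*$ its upper semicontinuous regularisation, so $-1\le u\le 0$, $u=-1$ on $K$ off an $m$-polar set, and $\Capm(K)=\int_X(-u)H_m(u)$ by Theorem~\ref{thm: cap formula}. Since $m$-polar sets sit inside $\{\varphi=-\infty\}$ for some $\varphi\in\SHXo$ by the Josefson theorem and are therefore Lebesgue negligible, and since $-u\equiv 1$ on $K$, one gets $V(K)=V(\{u=-1\})\le\int_X(-u)^{q}\,\omega^n$ for every $q\ge 0$.

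Next, integration by parts --- valid for bounded $(\omega,m)$-sh functions exactly as in the pluripotential case --- gives, for $0\le j\le m-1$,
$$
\int_X du\wedge d^c u\wedge(\omega+dd^c u)^{j}\wedge\omega^{n-1-j}=\int_X(-u)H_{j+1}(u)-\int_X(-u)H_{j}(u)\le\Capm(K),
$$
and in particular $\int_X(-u)\,\omega^n\le\Capm(K)$, which already proves the case $p=1$. For $1<p<\tfrac{n}{n-m}$ I would invoke the Sobolev-type estimates for $m$-subharmonic functions (B\l ocki, Dinew--Ko\l odziej): localising to coordinate charts, where $u$ is $m$-subharmonic of oscillation at most $1$, these bound $\|u\|_{L^{q}(\omega^n)}^{2m}$, for any $q<\tfrac{2nm}{n-m}$, by a fixed multiple of the mixed energies in the last display, hence by $C\,\Capm(K)$; so $\|u\|_{L^q(\omega^n)}\le C\,\Capm(K)^{1/(2m)}$. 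Taking $q=2mp<\tfrac{2nm}{n-m}$ then yields $V(K)\le\int_X(-u)^{2mp}\,\omega^n\le C\,\Capm(K)^{p}$, as wanted.

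The heart of the matter, and the main obstacle, is precisely this Sobolev inequality with the sharp exponent $\tfrac{2nm}{n-m}$: one must control the genuine Lebesgue gradient integral $\int_X|\nabla u|^{2m}\,\omega^n$ (or the relevant iterate of it) by the mixed Hessian energies above, which for $m\ge 2$ is not a pointwise comparison and requires the Schwarz-inequality induction on $m$ due to B\l ocki together with the capacitary refinements of Dinew--Ko\l odziej. Everything else --- the two regularity reductions, the capacity identity, the integration by parts, and the $p=1$ endpoint --- is a soft consequence of the potential theory already developed above.
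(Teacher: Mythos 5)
First, note that the paper does not prove this lemma at all: it is imported verbatim from Dinew--Ko{\l}odziej \cite{DK11} (``The following volume-capacity estimate is due to Dinew and Ko{\l}odziej''), so there is no in-paper argument to compare against. Your soft steps are all correct and would be accepted: the reduction to $K$ compact via inner regularity of $V$ and of $\Capm$ (Theorem \ref{thm: cap formula}), the identity $\Capm(K)=\int_X(-u)H_m(u)$ for $u=h^*_{m,K}$, the fact that $u=-1$ on $K$ off an $m$-polar (hence Lebesgue-null) set, the telescoping integration by parts bounding all mixed energies $\int_X du\wedge d^cu\wedge(\omega+dd^cu)^j\wedge\omega^{n-1-j}$ by $\Capm(K)$, and the resulting endpoint $V(K)\leq\int_X(-u)\,\omega^n\leq\Capm(K)$.

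The gap is exactly where you locate it, and it is fatal as written: the inequality $\|u\|_{L^q(\omega^n)}^{2m}\leq C\sum_j\int_X du\wedge d^cu\wedge(\omega+dd^cu)^j\wedge\omega^{n-1-j}$ for all $q<\tfrac{2nm}{n-m}$ is \emph{not} a theorem of B{\l}ocki or of Dinew--Ko{\l}odziej for $m\geq 2$. It is the critical complex Hessian--Sobolev inequality (the exponent $\tfrac{2nm}{n-m}$ is precisely the scale-invariant one), it sits in the same circle of open problems as B{\l}ocki's integrability conjecture --- which this very paper records as open right after Corollary \ref{cor: Blocki} --- and at the level of exponents it is equivalent in strength to the lemma you are trying to prove, so invoking it is circular in spirit: you have reduced a citable result to an uncitable one. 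Deferring it to ``the Schwarz-inequality induction on $m$ due to B{\l}ocki together with the capacitary refinements of Dinew--Ko{\l}odziej'' does not discharge it; B{\l}ocki's induction yields the $L^p$ bound only for $p<\tfrac{n}{n-m}$, not a gradient-type Sobolev inequality at $\tfrac{2nm}{n-m}$. There is also a secondary problem with the localisation even if one granted a Euclidean Hessian--Sobolev inequality: on a chart the $m$-sh function is $u+\rho$ with $\rho$ a local potential of $\omega$, and its local energy contains the term $\int d\rho\wedge d^c\rho\wedge(\cdots)$, which is $O(1)$ rather than $O(\Capm(K))$; so for $K$ of small capacity the localised right-hand side does not degenerate and the argument returns only the useless bound $\|u\|_{L^q}\leq C$. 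The actual Dinew--Ko{\l}odziej proof avoids both issues and proceeds differently (via the quantitative form of B{\l}ocki's $L^p$-estimate for $m$-sh functions and a comparison of capacities); if you want a self-contained argument you should follow that route, or simply cite \cite{DK11} as the paper does.
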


\begin{cor}\label{cor: Blocki}
Let $\f\in \SHXo$. Then $\f\in L^p(X,\omega^n)$ for any $p<\frac{n}{n-m}$. 
\end{cor}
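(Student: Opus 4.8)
The plan is to reduce the claim to the volume--capacity comparison of Lemma~\ref{lem: vol-cap DK} by a layer--cake (distribution function) argument, for which the one missing ingredient is a decay rate for the Hessian capacities of the sublevel sets $\{\f<-t\}$. First I would normalise: replacing $\f$ by $\f-\sup_X\f$ changes it by a finite constant and so does not affect membership in $L^p(X,\omega^n)$, hence we may assume $\sup_X\f=0$; moreover, since $(X,\omega^n)$ has finite total mass and $\f\in L^1(\omega^n)$, only the range $1<p<\tfrac{n}{n-m}$ needs to be treated.

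The key step will be the estimate
\[
\Capm\bigl(\{\f<-t\}\bigr)\le \frac{A}{t},\qquad t>0,
\]
with $A$ depending only on $\|\f\|_{L^1(\omega^n)}$ and on $(X,\omega,m)$. To prove it, fix a competitor $u\in\SHXo$ with $-1\le u\le 0$; since $-\f\ge t$ on $\{\f<-t\}$ we have $\int_{\{\f<-t\}}H_m(u)\le \tfrac1t\int_X(-\f)H_m(u)$, so it suffices to bound $\int_X(-\f)H_m(u)$ independently of $u$. Writing $\f_k:=\max(\f,-k)$, integrating by parts and using $dd^c\f_k=(\omega+dd^c\f_k)-\omega$, together with the facts that each current $(\omega+dd^c\f_k)\wedge(\omega+dd^c u)^{j-1}\wedge\omega^{n-j}$ and $(\omega+dd^c u)^{j}\wedge\omega^{n-j}$ ($1\le j\le m$) is positive of total mass $1$ and that $-u\ge 0$, an induction on $j$ gives
\[
\int_X(-\f_k)(\omega+dd^c u)^{j}\wedge\omega^{n-j}\le \|\f_k\|_{L^1(\omega^n)}+j\le \|\f\|_{L^1(\omega^n)}+m .
\]
Letting $k\to+\infty$ and applying monotone convergence (as $-\f_k\uparrow-\f$) yields $\int_X(-\f)H_m(u)\le \|\f\|_{L^1(\omega^n)}+m=:A$, and taking the supremum over all such $u$ gives the capacity bound.

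To conclude, I would use the layer--cake formula
\[
\int_X|\f|^p\,\omega^n=p\int_0^{+\infty}t^{p-1}\,V\bigl(\{\f<-t\}\bigr)\,dt .
\]
On $(0,1]$ the integrand is at most $t^{p-1}V(X)$, which is integrable. For $t\ge 1$, fixing $q$ with $p<q<\tfrac{n}{n-m}$ and combining Lemma~\ref{lem: vol-cap DK} with the capacity estimate gives $V(\{\f<-t\})\le C\,\Capm(\{\f<-t\})^q\le C\,A^q\,t^{-q}$, so the tail integral is dominated by $CA^q\int_1^{+\infty}t^{p-1-q}\,dt<+\infty$ because $p-1-q<-1$. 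Adding the two contributions shows $\f\in L^p(X,\omega^n)$.

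I expect the main obstacle to be the capacity decay estimate, and more precisely making the integration by parts rigorous for the possibly unbounded $\f$ (whence the truncations $\f_k$) while checking that every mixed Hessian current appearing along the induction is positive with the correct total mass; once this is done the bound on $\int_X(-\f)H_m(u)$ depends only on $\|\f\|_{L^1(\omega^n)}$ and is therefore uniform in the competitor $u$, and the remaining layer--cake computation is routine.
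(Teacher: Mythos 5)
Your proof is correct and follows essentially the same route as the paper: the layer--cake formula combined with the Dinew--Ko{\l}odziej volume--capacity estimate of Lemma~\ref{lem: vol-cap DK} and the decay $\Capm(\{\f<-t\})\le A/t$. The only difference is that you establish this capacity decay directly by a Chern--Levine--Nirenberg--type integration by parts on the truncations $\max(\f,-k)$, whereas the paper simply cites it from \cite[Corollary 3.19]{Lu13a}; your argument for that step is sound.
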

\begin{proof}
We can assume that $\sup_X \f=1$. Fix $p<n/(n-m)$ and $q$ such that $p<q<n/(n-m)$. It follows from \cite[Corollary 3.19]{Lu13a} and the previous volume-capacity estimate that 
\begin{eqnarray*}
\int_X (-\f)^p \omega^n & =& 1+ p\int_1^{+\infty} t^{p-1}V(\f<-t) dt\\
&\leq & 1 + C_q p\int_1^{+\infty} t^{p-1}\left[\Capm(\f<-t)\right]^q dt\\
&\leq &  1 + C_q C p \int_1^{+\infty} t^{p-q-1} dt <+\infty.
\end{eqnarray*}
\end{proof}
One expects that Corollary \ref{cor: Blocki} holds for any $p<\frac{nm}{n-m}$. In the local context where $\omega$ is the standard K\"ahler metric, it was known as B{\l}ocki's conjecture.

\begin{lem}\label{lem: cap and E1}
Fix $\f\in \SHXo$. If 
$$
\int_{0}^{+\infty} t^m \Capm(\f<-t) dt <+\infty 
$$
then $\f\in \Ect$. Conversely  for each $C>0$,
$$
\sup \left\{ \int_0^{+\infty} t\Capm(\f<-t) dt \setdef \f\in \Ec_{m,C}^1(X,\omega) \right\} <+\infty .
$$
\end{lem}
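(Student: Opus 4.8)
The plan is to prove the two implications by unrelated arguments. For the first one I would work with the canonical truncations $\f_j:=\max(\f,-j)$ and compare their Hessian masses on sublevel sets with capacities; for the converse I would estimate $\Capm(\f<-t)$ from above by means of the relative $m$-extremal function, the comparison principle, and the volume--capacity inequality of Lemma \ref{lem: vol-cap DK}.

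For the first implication, after subtracting a constant one may assume $\sup_X\f\le 0$, so each $\f_j\in\SHXo\cap L^\infty(X)$ is non-positive. Write $a_t:=\int_{\{\f_j<-t\}}H_m(\f_j)$, so that by the layer--cake formula $\int_X(-\f_j)H_m(\f_j)=\int_0^{+\infty}a_t\,dt$, and it suffices to bound the latter uniformly in $j$. The first step is to introduce, for $t\ge 1/2$, the competitor
\[
w_t:=\tfrac{1}{2t}\max(\f_j,-2t)=\max\bigl(\tfrac{\f_j}{2t},-1\bigr),
\]
which is $(\omega,m)$-sh with $-1\le w_t\le 0$; on the set $\{\f_j>-2t\}$ one has $w_t=\f_j/(2t)$, and expanding $\bigl(\omega+\tfrac1{2t}dd^c\f_j\bigr)^m\wedge\omega^{n-m}$ and discarding the non-negative mixed terms gives $H_m(w_t)\ge(2t)^{-m}H_m(\f_j)$ there. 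Since $w_t$ competes in the definition of the capacity, this yields $(2t)^{-m}(a_t-a_{2t})\le\int_{\{\f<-t\}}H_m(w_t)\le\Capm(\f<-t)$ for almost every $t$ (neglecting the at most countably many levels charged by $H_m(\f_j)$), i.e. $a_t\le(2t)^m\Capm(\f<-t)+a_{2t}$. Iterating this --- the tail $a_{2^N t}$ vanishes once $2^N t>j$ because $\f_j\ge-j$ --- produces $a_t\le\sum_{k\ge 0}(2^{k+1}t)^m\Capm(\f<-2^k t)$, and integrating and changing variables gives $\int_0^{+\infty}a_t\,dt\le \tfrac12+2^{m+1}\int_0^{+\infty}s^m\Capm(\f<-s)\,ds$, a bound independent of $j$. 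Consequently $E(\f_j)\ge\int_X\f_j H_m(\f_j)$ stays bounded below uniformly, $\f_j\downarrow\f$, and the monotone--limit criterion for $\Ect$ recalled above gives $\f\in\Ect$.

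For the converse, fix $\f\in\EctC$; the heart of the matter is the inequality $\Capm(\f<-2s)\le\int_{\{\f_j\le-s\}}H_m(\f_j/s)$ for $s\ge 1$ and $2s<j$. Given $w$ with $-1\le w\le 0$, set $u:=\max(\f_j/s,\,w-1)\in\SHXo\cap L^\infty(X)$; the inclusions $\{\f<-2s\}\subset\{\f_j/s<w-1\}\subset\{\f<-s\}$ hold, and comparing the total Hessian mass $1$ of $u$ over $\{\f_j/s<w-1\}$ (where $H_m(u)=H_m(w)$) with that over $\{\f_j/s>w-1\}\supset\{\f_j>-s\}$ (where $H_m(u)=H_m(\f_j/s)$) gives $\int_{\{\f<-2s\}}H_m(w)\le\int_{\{\f_j\le-s\}}H_m(\f_j/s)$; take the supremum over $w$. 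Expanding $H_m(\f_j/s)$, every term carrying a factor $\omega+dd^c\f_j$ also carries a factor $s^{-1}$, and, combined with the energy bound $\int_X(-\f_j)\,\omega^{n-k}\wedge(\omega+dd^c\f_j)^k\le(m+1)C$ (a consequence of $E(\f_j)\ge-C$) together with Fubini, these contribute a finite amount to $\int_0^{+\infty}s\,\Capm(\f<-2s)\,ds$. The single remaining term is $\int_{\{\f<-s\}}\omega^n=V(\f<-s)$, which one controls by combining the standard universal bound $\Capm(\f<-s)\le C_0/s$ with the volume--capacity inequality $V(\f<-s)\le C\,\Capm(\f<-s)^p$ with $p>1$ (Lemma \ref{lem: vol-cap DK}): this improves the decay past $1/s$ and, after the short bootstrap carried out exactly as in \cite{GZ05,GZ07,BBGZ13}, yields $\int_0^{+\infty}s\,V(\f<-s)\,ds<+\infty$ with a bound depending only on $C$. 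Assembling the pieces gives the asserted uniform bound on $\int_0^{+\infty}t\,\Capm(\f<-t)\,dt$.

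The delicate point is the converse. The elementary estimates only give $\Capm(\f<-t)=O(1/t)$, which is useless for the integral, and even after feeding in the volume--capacity inequality one reaches an exponent $t^{-p}$ which can be at the borderline when $m$ is small relative to $n$, so the bootstrap and the careful bookkeeping of the mixed terms (following \cite{GZ05,GZ07,BBGZ13}) are genuinely needed; this mismatch is exactly why the statement affords only the weight $t$ on the necessity side while requiring $t^m$ on the sufficiency side, and why it is not an equivalence when $m\ge 2$. The first implication, by contrast, is soft, the only care being the construction of the competitor $w_t$, the use of the standard identity $H_m(\max(v,c))=H_m(v)$ on $\{v>c\}$, and the harmless neglect of the level sets charged by $H_m(\f_j)$.
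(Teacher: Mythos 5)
Your overall architecture coincides with the paper's. For the sufficiency, both proofs rest on the observation that $t^{-1}\max(\f,-t)$ (your $w_t$) is a competitor in the definition of $\Capm$, so that $H_m(\max(\f,-t))\le t^m\Capm$ as measures, followed by a layer-cake computation; your dyadic iteration of $a_t\le (2t)^m\Capm(\f<-t)+a_{2t}$ is a slightly longer route to the same uniform bound on $\int_X(-\f_j)H_m(\f_j)$, and the conclusion via the monotone-limit criterion for $\Ect$ is fine. For the necessity, both proofs use the comparison principle to get $\Capm(\f<-2t)\le\int_{\{\f<-t\}}H_m(t^{-1}\f)$, expand $H_m(t^{-1}\f)\le(t^{-1}(\omega+dd^c\f)+\omega)^m\wedge\omega^{n-m}$, and absorb every term carrying a factor $\omega+dd^c\f$ into the energy via Fubini.

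The gap is exactly where you stop: the uniform bound on $\int_1^{+\infty}t\,\vol(\f<-t)\,dt$. The combination you propose --- $\Capm(\f<-t)\le C_0/t$ together with $\vol\le C\,\Capm^p$ from Lemma \ref{lem: vol-cap DK} --- only yields $\vol(\f<-t)\lesssim t^{-p}$ with $p<n/(n-m)$, and $\int_1^{\infty}t^{1-p}\,dt$ diverges as soon as $p\le 2$, i.e.\ whenever $2m\le n$. You acknowledge this, but then defer to ``the short bootstrap carried out exactly as in \cite{GZ05,GZ07,BBGZ13}''; no such bootstrap is available there in the form you need, since in the Monge--Amp\`ere case the volume--capacity inequality holds for every exponent (indeed with exponential gain), so this difficulty never arises. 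The paper closes precisely this step with an interpolation: write $\vol(\f<-t)=\vol(\f<-t)^{\gamma}\,\vol(\f<-t)^{1-\gamma}$, apply H\"older with conjugate exponents $q,r$ chosen so that $pq\gamma=1$ and $pr(1-\gamma)>2$, and use $\Capm(\f<-t)\le C/t$ on the second factor; this gives $\int_1^{\infty}t\,\vol(\f<-t)\,dt\le A\bigl[\int_1^{\infty}t\,\Capm(\f<-t)\,dt\bigr]^{1/q}$ with $A$ uniform on $\EctC$, hence the self-improving inequality $C_j\le A\,C_j^{1/q}+B$ for $C_j:=\int_1^{\infty}t\,\Capm(\f_j<-t)\,dt$ (finite because $\f_j=\max(\f,-j)$ is bounded), and the uniform bound follows on letting $j\to\infty$. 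Until you supply an argument of this kind, the converse half of the lemma --- its only genuinely delicate point --- remains unproved.
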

\begin{proof}
Fix $\f\in \SHXo$. We can assume that $\sup_X \f=-1$. Observe that for 
$t\geq 1$, the function $1 + t^{-1} \max(\f, −t)$ is $(\omega,m)$-sh with values in $[0, 1]$, hence
$$
H_m(\max(\f,-t)) \leq t^m \Capm .
$$
Let us prove the first assertion. If 
$$
\int_{0}^{+\infty} t^m \Capm(\f<-t) dt <+\infty 
$$
then in particular $t^m \Capm(\f<-t)$ converges to $0$ as $t\to +\infty$. This coupled with the above observation yields
$$
\int_{\{\f\leq -t\}}H_m(\max(\f,-t)) \longrightarrow 0 ,
$$
which implies that $\f\in \EcXo$. Now by the comparison principle 
$H_m(\max(\f,-t))$ coincides with $H_m(\f)$ on the Borel set $\{\f>-t\}$. We thus get
\begin{eqnarray*}
\int_X (-\f)H_m(\f) &=& 1+ \int_{1}^{+\infty} H_m(\f)(\f\leq -t) dt\\
&=& 1+ \int_{1}^{+\infty} \left[1-H_m(\f)(\f> -t)\right] dt\\
&=& 1+ \int_{1}^{+\infty} \left[1-H_m(\max(\f,-t))(\f> -t)\right] dt\\
&\leq & 1+ \int_{1}^{+\infty} H_m(\max(\f,-t))(\f \leq -t)) dt\\
&\leq & 1+ \int_{1}^{+\infty} \Capm(\f \leq -t) dt \\
&< & +\infty,
\end{eqnarray*}
which yields $\f\in \EcXo$.

We now prove the second assertion. The proof is slightly different from the classical Monge-Amp\`ere equation due to a lack of integrability (it is not very clear that $\int_X \f^2 \omega^n<+\infty$). Fix $u\in \SHXo$ with values in 
$[-1,0]$. Observe that 
$$
(\f<-2t) \subset (t^{-1}\f< u-1) \subset (\f<-t) .
$$
It follows from the comparison principle that 
$$
\int_{\{\f<-2t\}} H_m(u) \leq \int_{\{\f<-t\}}H_m(t^{-1}\f) .
$$
Expanding $H_m(t^{-1}\f)\leq (t^{-1}(\omega+dd^c \f) + \omega)^m \wedge \omega^{n-m}$ yields
\begin{multline*}
\int_2^{+\infty}  t\Capm(\f<-t) = 4\int_1^{+\infty} t\Capm(\f<-2t)dt 
\\
\leq  4\int_1^{+\infty}  t \vol(\f<-t) dt + 4 \sum_{j=1}^m \binom{m}{j}\int_X (-\f) \omega_{\f}^j\wedge\omega^{n-j}.
\end{multline*}
The last term is finite and uniformly bounded in $\f\in \EctC$. Fix $1<p<\frac{n}{n-m}$ 
and $0<\gamma<1$. Using H\"older inequality we get
\begin{multline*}
\int_1^{+\infty}  t \vol(\f<-t) dt =  \int_1^{+\infty}  t \vol(\f<-t)^{\gamma} \vol(\f<-t)^{1-\gamma} dt \\
\leq  \left[\int_1^{+\infty}  t \vol(\f<-t)^{q\gamma} dt\right]^{1/q} \left[\int_1^{+\infty}  t \vol(\f<-t)^{r(1-\gamma)} dt\right]^{1/r}\\
\leq   A\left[\int_1^{+\infty}  t \Capm(\f<-t)^{pq\gamma} dt\right]^{1/q} \left[\int_1^{+\infty}  t \Capm(\f<-t)^{pr(1-\gamma)} dt\right]^{1/r}\\
\leq  A \left[\int_1^{+\infty}  t \Capm(\f<-t) dt\right]^{1/q} \left[\int_1^{+\infty}  t^{1-pr(1-\gamma)} dt\right]^{1/r}.
\end{multline*}
Here, $1/q +1/r =1$ and we have chosen $\gamma$ so that $pq\gamma=1$ and $pr(1-\gamma) >2$. Such a choice is always possible. The constant 
$A$ is also uniform in $\f\in \EctC$ since $\sup_X \f \geq E(\f)\geq -C$ and 
$$
\Capm(u<-t) \leq C/t
$$
for a uniform constant $C$ as follows from \cite{Lu13a}.

By considering $\f_j:=\max(\f,-j)$ and applying what we have done so far we get
$$
C_j \leq A \cdot C_j^{1/q} + B,
$$
where $C_j:= \int_1^{+\infty} t\Capm(\f_j<-t)$ and $A,B$ are universal constant. Letting $j\to+\infty$ we get the result.
\end{proof}

\subsection{Upper semicontinuity }
Let $\mu$ be a probability measure on $X$. The functional $\Fc_{\mu}$
is defined by
$$
\Fcm (\f) := E(\f) -\int_X \f d\mu .
$$
\begin{lem}\label{lem: Banach-Sak}
Let $\mu$ be a probability measure which does not charge $m$-polar sets. Let $(u_j)\subset \SHXo$ be a sequence which converges in
$L^1(X)$ towards $u\in \SHXo$. If 
$\sup_{j\geq 0} \int_X u_j^2d\mu <+\infty $
then 
$$
\int_X u_j d\mu \longrightarrow \int_X u d\mu .
$$
\end{lem}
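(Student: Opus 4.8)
The statement is the standard "Banach--Saks type" lemma from the variational approach of Berman--Boucksom--Guedj--Zeriahi. The plan is to follow their argument: pass to convex combinations (Banach--Saks), use that the $L^1$-convergence together with the uniform $L^2(\mu)$-bound gives weak $L^2(\mu)$-convergence, and then exploit quasi-continuity together with the fact that $\mu$ puts no mass on $m$-polar sets (so $\mu$ integrates quasi-continuous functions "continuously" along monotone sequences, via Lemma~\ref{lem: increase}).

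\begin{proof}
Since $(u_j)$ converges to $u$ in $L^1(X)$, after extracting a subsequence we may assume $u_j\to u$ almost everywhere; it suffices to prove the conclusion for this subsequence, as the argument applies to any further subsequence. By the Banach--Saks theorem applied in the Hilbert space $L^2(\mu)$ (using $\sup_j\|u_j\|_{L^2(\mu)}<+\infty$), after a further extraction the Ces\`aro means
$$
v_N:=\frac1N\sum_{j=1}^N u_j
$$
converge in $L^2(\mu)$, hence in $L^1(\mu)$, to some $v\in L^2(\mu)$. On the other hand each $v_N$ is $(\omega,m)$-sh, and $v_N\to u$ in $L^1(X)$, so $v=u$ $\mu$-almost everywhere and in particular $\int_X v_N\,d\mu\to\int_X u\,d\mu$. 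It therefore remains to upgrade this from the averages to the sequence itself.

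For this set $w_j:=\bigl(\sup_{k\ge j}u_k\bigr)^*$; then $w_j\in\SHXo$, $w_j$ decreases to $u$ (using $u_j\to u$ a.e. and that $\{(\sup_{k\ge j}u_k)<w_j\}$ is $m$-polar by Lemma~\ref{lem: negligible}), and $u_j\le w_j+(\text{an } m\text{-polar correction})$. Since $\mu$ does not charge $m$-polar sets and $w_j\searrow u$, quasi-continuity of $(\omega,m)$-sh functions together with the monotone convergence machinery (Lemma~\ref{lem: increase} applied to $\mu$, or directly to the $\omega^n$-type measures dominating it through the volume--capacity estimate Lemma~\ref{lem: vol-cap DK}) gives $\int_X w_j\,d\mu\to\int_X u\,d\mu$, hence $\limsup_j\int_X u_j\,d\mu\le\int_X u\,d\mu$. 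Applying the same argument to the averages $v_N$ in place of the $u_j$, or equivalently using that $\int_X v_N\,d\mu\to\int_X u\,d\mu$ while each $v_N$ is an average of the $u_j$, forces also $\liminf_j\int_X u_j\,d\mu\ge\int_X u\,d\mu$: indeed if some subsequence had $\int_X u_{j_i}\,d\mu\le\int_X u\,d\mu-\varepsilon$, running Banach--Saks on that subsequence would produce averages converging in $L^2(\mu)$ to $u$ with integral $\le\int_X u\,d\mu-\varepsilon$, a contradiction. Combining the two inequalities yields $\int_X u_j\,d\mu\to\int_X u\,d\mu$.
\end{proof}

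\noindent\emph{Main obstacle.} The delicate point is not the Banach--Saks extraction but justifying that the measure $\mu$ behaves continuously along the decreasing sequence $w_j\searrow u$; this is exactly where one needs both quasi-continuity of $(\omega,m)$-sh functions and the hypothesis that $\mu$ charges no $m$-polar set, and it is the step that makes the $L^2(\mu)$-bound indispensable rather than a mere $L^1(\mu)$-bound (the bound is what lets one control the negative part and invoke Banach--Saks in a Hilbert space). Handling the $m$-polar correction terms relating $u_j$ to $w_j$ cleanly, via Lemma~\ref{lem: negligible}, is the other technical care point.
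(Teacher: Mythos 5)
There is a genuine gap at the crux of the argument. After extracting Ces\`aro means $v_N$ converging in $L^2(\mu)$ to some $v$, you write ``each $v_N$ is $(\omega,m)$-sh, and $v_N\to u$ in $L^1(X)$, so $v=u$ $\mu$-almost everywhere''. That implication is exactly what the lemma is about, and it does not follow from the two convergences alone: $L^1(X)$-convergence identifies the limit only up to Lebesgue-null sets, whereas $\mu$ may be singular with respect to $\omega^n$ and concentrated on a Lebesgue-null set, so a priori $v$ and $u$ could differ on a set of positive $\mu$-measure. The same unproved identification reappears at the end, where your contradiction argument needs the new Ces\`aro averages to converge in $L^2(\mu)$ ``to $u$''. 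The paper closes this gap by running the envelope construction on the Ces\`aro means rather than on the original sequence: after a further extraction $v_N\to v$ $\mu$-a.e., one sets $\psi_j:=(\sup_{k\ge j}v_k)^*$; then $\psi_j$ decreases everywhere on $X$ to $u$ (this uses only the $L^1(X)$-convergence), while $\psi_j=\sup_{k\ge j}v_k$ holds $\mu$-a.e.\ because the discrepancy set is $m$-polar (Lemma \ref{lem: negligible}) and $\mu$ charges no $m$-polar set; since $\sup_{k\ge j}v_k\downarrow\limsup_k v_k=v$ $\mu$-a.e., this forces $u=v$ $\mu$-a.e. Your envelopes $w_j=(\sup_{k\ge j}u_k)^*$ are taken over the original $u_k$, which need not converge $\mu$-a.e., so they only deliver the one-sided bound $\limsup_j\int_X u_j\,d\mu\le\int_X u\,d\mu$ and cannot identify $v$ with $u$.

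Two smaller points. The claim that $\int_X v_N\,d\mu\to\int_X u\,d\mu$ ``while each $v_N$ is an average of the $u_j$'' forces $\liminf_j\int_X u_j\,d\mu\ge\int_X u\,d\mu$ is false as stated: Ces\`aro convergence of a bounded real sequence says nothing about its $\liminf$. Only the subsequence/contradiction version is salvageable, and it is cleaner to argue as the paper does: since $\bigl(\int_X u_j\,d\mu\bigr)$ is bounded, pass to a subsequence along which it converges and show that the limit must be $\int_X u\,d\mu$, using that the Ces\`aro means of the integrals then converge to the same limit. Also, the appeal to Lemma \ref{lem: vol-cap DK} is irrelevant here: $\mu$ is not assumed to be dominated by $\Capm$ or by $\omega^n$, and for $\int_X w_j\,d\mu\to\int_X u\,d\mu$ plain monotone convergence suffices, since $w_j$ decreases to $u$ everywhere.
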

\begin{proof}
Since  $\int_X u_jd\mu$ is bounded it suffices to prove that  every cluster point is $\int_X u d\mu.$  
Without loss of generality we can assume that $\int_X u_j d\mu $ converges.  Since the sequence 
  $u_j$ is bounded in $L^2(\mu)$,  one can apply Banach-Saks theorem  to extract a subsequence
   (still denoted by  $u_j$) such that
$$
\f_N:=\frac{1}{N}\sum_{j=1}^Nu_j
$$
converges in $L^2(\mu)$ and  $\mu$-almost everywhere to $\f.$ Observe also that
 $\f_N\to u$ in $L^1(X)$.  For each   $j\in\N$  set
$$
\p_j:=(\sup_{k\geq j}\f_k)^{*}.
$$
Then $\p_j\downarrow u$ in $X$.  But $\mu$ does not charge the $m$-polar set
$$
\left\{(\sup_{k\geq j}\f_k)^{*}>\sup_{k\geq j}\f_k\right\}.
$$
We thus get
$\p_j=\sup_{k\geq j}\f_k$ $\mu$-almost everywhere. Therefore, 
 $\p_j$ converges to $\f$ $\mu$-almost everywhere hence $u=\f$ $\mu$-almost everywhere. This yields
$$
\lim_j \int_X u_jd\mu = \lim_j \int_X \f_jd\mu =\int_X u d\mu.
$$
\end{proof}

\begin{lem}\label{lem: u.s.c.}
Let $\mu$ be a probability measure on $X$ such that 
$$
\mu (K) \leq A \Capm(K), \ \forall K\subset X,
$$
for some positive constant $A$. Then the functional $\Fc_{\mu}$ is upper semicontinuous on each compact subset $\EctC$, $C>0$.
\end{lem}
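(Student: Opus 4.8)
The strategy is to split $\Fcm = E - L_\mu$, where $L_\mu(\f) := \int_X \f\, d\mu$, and to use that $E$ has already been shown to be upper semicontinuous for the $L^1$-topology on $\SHXo$; it will then be enough to prove that $L_\mu$ is \emph{continuous} along $L^1$-convergent sequences staying inside $\EctC$, so that $\Fcm = E - L_\mu$ is upper semicontinuous there. Recall also that $\EctC$ equipped with the $L^1$-topology is compact and metrizable, so it suffices to check the sequential form of upper semicontinuity.

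The first step is a uniform second moment bound: I would show that
\[
\sup_{\f \in \EctC} \int_X \f^2\, d\mu < +\infty .
\]
Since every $\f \in \EctC$ satisfies $\sup_X \f \leq 0$, a layer-cake computation (Fubini) gives $\int_X \f^2\, d\mu = 2\int_0^{+\infty} t\,\mu(\{\f<-t\})\, dt$, and the hypothesis $\mu \leq A\,\Capm$ bounds the right-hand side by $2A \int_0^{+\infty} t\,\Capm(\{\f<-t\})\, dt$. The second assertion of Lemma \ref{lem: cap and E1} says precisely that this last integral is bounded uniformly over $\f \in \EctC$, which yields the claim. By Cauchy--Schwarz this also makes $L_\mu$, and hence $\Fcm$, finite on $\EctC$.

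To conclude, let $(\f_j) \subset \EctC$ converge in $L^1(X)$ to some $\f$; by compactness of $\EctC$ we have $\f \in \EctC$. Since $\mu \leq A\,\Capm$, the measure $\mu$ charges no $m$-polar set, and the bound from the first step gives $\sup_j \int_X \f_j^2\, d\mu < +\infty$, so Lemma \ref{lem: Banach-Sak} applies and yields $\int_X \f_j\, d\mu \to \int_X \f\, d\mu$. Combined with the upper semicontinuity of $E$,
\[
\limsup_{j\to+\infty} \Fcm(\f_j) \leq \limsup_{j\to+\infty} E(\f_j) - \lim_{j\to+\infty}\int_X \f_j\, d\mu \leq E(\f) - \int_X \f\, d\mu = \Fcm(\f),
\]
which is the desired upper semicontinuity on $\EctC$.

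The only genuine work is the uniform integrability estimate of the first step, and this is also where the hypothesis $\mu \leq A\,\Capm$ is essential: unlike in the Monge--Amp\`ere case, functions in $\Ect$ need not lie in $L^2(X,\omega^n)$, so $\int_X \f^2\, d\mu$ cannot be controlled by a volume integral directly — the capacity domination of $\mu$ is what replaces that, funneled through Lemma \ref{lem: cap and E1}. Everything else is formal once this bound and Lemma \ref{lem: Banach-Sak} are in hand.
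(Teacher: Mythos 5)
Your proposal is correct and follows exactly the paper's own argument: the same uniform bound $\int_X \f^2\,d\mu \leq 2A\int_0^{+\infty} t\,\Capm(\f<-t)\,dt$ obtained from Lemma \ref{lem: cap and E1}, followed by Lemma \ref{lem: Banach-Sak} to get $\int_X \f_j\,d\mu \to \int_X \f\,d\mu$, combined with the upper semicontinuity of $E$. Nothing further to add.
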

\begin{proof}
Let $(\f_j)$ be a sequence in $\EctC$ converging in $L^1(X)$ to
$\f\in \EctC$. We can assume that $\f_j\leq 0$. It follows from Lemma \ref{lem: cap and E1} that 
$$
\int_X (-\f_j)^2 d\mu \leq 2 \int_0^{+\infty}t\mu(\f_j< -t) dt
\leq 2A \int_0^{+\infty} t\Capm(\f_j<-t) dt \leq 2AC',
$$ 
for a positive constant $C'$. From Lemma \ref{lem: Banach-Sak} we thus get
$$
\int_X \f_j d\mu \longrightarrow \int_X \f d\mu .
$$ 
This coupled with the upper semicontinuity of $E$ yield the result.
\end{proof}

\begin{defi}
We say that the functional $\Fcm$ is proper if whenever 
$\f_j\in \Ect$ are such that $E(\f_j)\rightarrow -\infty$  and
$\int_X \f_j=0$ then $\Fcm(\f_j) \to -\infty$.
\end{defi}
\begin{lem}\label{lem: properness}
Let $\mu$ be a probability measure on $X$ such that $\EctC\subset L^1(\mu)$. The functional $\Fcm$ is proper: there exists $C > 0$ such that for all $\f\in \Ect$ with $\int_X \f \omega^n=0$ we have
$$
\Fcm(\f) \leq E(\f) + C|E(\f)|^{1/2}. 
$$
\end{lem}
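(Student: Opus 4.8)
The plan is to normalize the competitor by an additive constant, to bound $\int_X(-\f)\,d\mu$ uniformly over each compact sublevel set $\EctC$, and then to exploit how fast the energy $E$ decays along the ray $t\mapsto\f/t$. For the normalization: since $E(\f+c)=E(\f)+c$ (Lemma \ref{lem: primitive}) and $\mu(X)=1$, the functional $\Fcm$ is unchanged by adding constants, so $\Fcm(\f)=\Fcm(\f_0)$ with $\f_0:=\f-\sup_X\f$. The Hartogs inequality $\int_X(u-\sup_X u)\,\omega^n\ge-C_0$ on $\SHXo$, together with $\int_X\f\,\omega^n=0$, forces $0\le\sup_X\f\le C_0$, hence $E(\f_0)\le E(\f)\le0$ and $|E(\f_0)|\le|E(\f)|+C_0$. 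It therefore suffices to prove an estimate of the form $\int_X(-u)\,d\mu\le C|E(u)|^{1/2}+C$ for $u\in\Ect$ with $u\le0$ and $\sup_X u=0$; tracing this back through the normalization (and using $\sqrt{|E(\f)|+C_0}\le\sqrt{|E(\f)|}+\sqrt{C_0}$) yields the statement after enlarging $C$, the range where $|E(\f)|$ stays bounded being harmless since $\Fcm$ is bounded above on each $\EctC$.

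Next I would show that $M_C:=\sup\{\int_X(-\f)\,d\mu:\f\in\EctC\}<\infty$ for every $C$. The set $\EctC$ is compact and convex in $L^1(X)$, and by hypothesis the affine functional $\f\mapsto\int_X(-\f)\,d\mu$ is finite on it. It is moreover lower semicontinuous there: write $\int_X(-\f)\,d\mu=\sup_N\bigl(-\int_X\max(\f,-N)\,d\mu\bigr)$, so it is enough that $\f\mapsto\int_X\max(\f,-N)\,d\mu$ be upper semicontinuous on $\SHXo$; this follows from the standard fact that if $\f_j\to\f$ in $L^1$ with $\f_j\in\SHXo$ then $\f_j\le(\sup_{k\ge j}\f_k)^{*}\downarrow\f$, so $\limsup_j\int_X g_j\,d\mu\le\int_X g\,d\mu$ for the bounded truncations $g_j:=\max(\f_j,-N)$ (which converge to $\max(\f,-N)$ in $L^1$ by dominated convergence along a.e.-convergent subsequences). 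A finite, lower semicontinuous, affine functional on a compact convex set is bounded above: by Baire's theorem one of the closed convex sublevel sets $\{\int_X(-\f)\,d\mu\le N_*\}$ has nonempty interior relative to $\EctC$, and convexity together with the finite $L^1(X)$-diameter of $\EctC$ then gives $M_C<\infty$. This is the step in which the hypothesis $\EctC\subset L^1(\mu)$ is actually used.

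The key estimate records how $E$ decays under $u\mapsto u/s$. For $u\le0$ with $\sup_X u=0$ and $s\ge1$, the function $u/s=(1-\tfrac1s)\cdot0+\tfrac1s u$ lies in $\Ect$ (convexity of $\Ect$) and $\omega+dd^c(u/s)=(1-\tfrac1s)\omega+\tfrac1s(\omega+dd^c u)$. Expanding $(\omega+dd^c(u/s))^k$ by the binomial formula and collecting terms according to the power $j$ of $\omega+dd^c u$ gives
$$-E(u/s)=\frac1s\sum_{j=0}^m c_{j,s}\,s^{-j}\Bigl(-\int_X u\,(\omega+dd^c u)^j\wedge\omega^{n-j}\Bigr),\qquad 0\le c_{j,s}\le2^m,$$
and every term on the right is non-negative. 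The decisive point is that the $j=0$ term equals $\int_X(-u)\,\omega^n\le C_0$ by Hartogs and does not involve the energy, while each $j\ge1$ term is at most $(m+1)(-E(u))$ (directly from the definition of $E$) and carries the extra factor $s^{-j}\le s^{-1}$. Hence
$$-E(u/s)\ \le\ \frac{a_m}{s}+\frac{b_m\,(-E(u))}{s^2},\qquad a_m:=2^mC_0,\quad b_m:=2^m m(m+1).$$

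To conclude, put $a:=-E(u)$ and $s:=\max\bigl(1,\,2a_m,\,2\sqrt{b_m a}\bigr)\ge1$; then $-E(u/s)\le\tfrac12+\tfrac14<1$, so $u/s\in\mathcal E^1_1(X,\omega,m)$ and $\int_X(-u/s)\,d\mu\le M_1$. Multiplying by $s$ gives $\int_X(-u)\,d\mu\le sM_1\le M_1(1+2a_m)+2M_1\sqrt{b_m}\,\sqrt{a}$, and since $\sup_X u=0$ this is the required inequality from the first paragraph, with $C=\max\bigl(M_1(1+2a_m),\,2M_1\sqrt{b_m}\bigr)$; unwinding the normalization finishes the proof. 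The main obstacle is precisely this interplay in the third step: a naive use of scaling only yields a \emph{linear} bound $\int_X(-u)\,d\mu\le\mathrm{const}\cdot(-E(u))$ with an uncontrollable constant, and it is the observation that the $s^{-1}$-coefficient is the energy-free quantity $\int_X(-u)\,\omega^n$ — uniformly bounded by Hartogs — while the energy enters only at order $s^{-2}$, that upgrades the rate to $|E(u)|^{1/2}$.
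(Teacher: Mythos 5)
Your proof is correct, and its engine --- rescaling $u\mapsto u/s$ with $s\sim|E(u)|^{1/2}$ after observing that in the multinomial expansion of $-E(u/s)$ the $j=0$ coefficient is the energy-free quantity $\int_X(-u)\,\omega^n$ (uniformly bounded by Hartogs) while the energy enters only at order $s^{-2}$ --- is precisely the argument of \cite{BBGZ13} that the paper invokes with the phrase ``repeat the arguments in \cite{BBGZ13}''; so on the main step you and the paper coincide. You diverge on the preliminary uniform bound $M_C=\sup\{\int_X(-\f)\,d\mu \setdef \f\in\EctC\}<+\infty$. The paper's hint ``arguing by contradiction'' refers to the convex-series trick: if $\p_j\in\EctC$ had $\int_X(-\p_j)\,d\mu\ge 4^j$, then $\p:=\sum_j 2^{-j}\p_j$ would lie in $\Ec^1_{C}(X,\omega,m)$ by concavity of $E$ yet fail to be $\mu$-integrable, contradicting the hypothesis directly. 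Your Baire-category route also works, but it is heavier: it requires proving lower semicontinuity of $\f\mapsto\int_X(-\f)\,d\mu$ on the compact convex set $\EctC$, and there you silently use that $\mu$ puts no mass on the $m$-polar set where $(\sup_{k\ge j}\f_k)^{*}$ exceeds $\sup_{k\ge j}\f_k$; this does follow from $\Ect\subset L^1(\mu)$ but needs a word of justification, whereas the series argument sidesteps it entirely. Finally, your scaling yields $\int_X(-u)\,d\mu\le C'+C''|E(u)|^{1/2}$ with an additive constant, and the absorption of $C'$ when $|E(\f)|$ is small is not fully justified as written; since properness only requires the bound with the additive constant, and the paper's own formulation is loose on exactly the same point, this is a cosmetic rather than substantive defect.
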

\begin{proof}
Arguing by contradiction we can prove that
$$
\sup \left\{\int_X(-\p) d\mu \setdef \p\in \EctC \right\} <+\infty, \forall C>0.
$$
Now we can repeat the arguments in \cite{BBGZ13}.
\end{proof}

\subsection{The projection theorem}
Let $f$ be an upper semicontinuous function on $X$. Recall that
the projection of $f$ on $\SHXo$ is defined by 
$$
P(f) := \sup\left\{ u\in \SHXo \setdef u\leq f \right\}.
$$

\begin{lem}\label{lem: BB08}
Let $u,v$ be continuous function on $X$. Then
$$
E\circ P(u+v)-E\circ P(u) = \int_0^1 \left[ \int_X v H_m(P(u+tv)) \right] dt.
$$
\end{lem}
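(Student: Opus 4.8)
The plan is to prove this differentiability formula by establishing that the map $t \mapsto E\circ P(u+tv)$ is $C^1$ on $[0,1]$ with derivative $\int_X v\, H_m(P(u+tv))$, and then integrate. The key input is the chain-rule-type formula: whenever $\varphi \in \mathcal{E}^1(X,\omega,m)$ and $\varphi + sw \in \mathcal{E}^1$ for small $s$, Lemma \ref{lem: primitive} gives $\frac{d}{ds}E(\varphi+sw)\vert_{s=0} = \int_X w\, H_m(\varphi)$. Since $P(u+tv)$ is a continuous $(\omega,m)$-sh function (by Corollary \ref{cor: continuous proj}, after approximating $u+tv$ uniformly by smooth functions), it lies in $\mathcal{E}^1$, so the strategy is to differentiate $E\circ P$ directly.

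First I would fix $t_0 \in [0,1]$ and write $g(t) := E\circ P(u+tv)$. Using Lemma \ref{lem: proj stable} one has $\sup_X |P(u+tv) - P(u+t_0 v)| \leq |t-t_0|\sup_X|v|$, so $g$ is Lipschitz, hence differentiable a.e.; the real work is identifying the derivative everywhere. The standard trick (following \cite{BBGZ13}, \cite{Berman13}) is a two-sided estimate. For $t > t_0$, from Lemma \ref{lem: primitive} (the inequality $\int_X(\varphi-\psi)H_m(\varphi) \leq E(\varphi)-E(\psi) \leq \int_X(\varphi-\psi)H_m(\psi)$) applied with $\varphi = P(u+tv)$, $\psi = P(u+t_0v)$:
$$
\int_X \big(P(u+tv)-P(u+t_0v)\big) H_m(P(u+tv)) \leq g(t)-g(t_0) \leq \int_X \big(P(u+tv)-P(u+t_0v)\big) H_m(P(u+t_0v)).
$$
The crux is then to compare $P(u+tv)-P(u+t_0v)$ with $(t-t_0)v$ on the relevant supports. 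The orthogonality relation from Theorem \ref{theorem: orthogonal}/Corollary \ref{cor: continuous proj} is decisive here: $H_m(P(w))$ is supported on the contact set $\{P(w)=w\}$. On $\{P(u+t_0v) = u+t_0v\}$ one has $P(u+tv) \geq u+tv = (u+t_0v) + (t-t_0)v = P(u+t_0v) + (t-t_0)v$ when $t>t_0$ (since $P(u+tv)\geq u+tv$ always), while everywhere $P(u+tv) \leq P(u+t_0v) + (t-t_0)\sup_X v$; combining with the analogous bound on the contact set of $P(u+tv)$ gives, after dividing by $t-t_0$ and letting $t \downarrow t_0$,
$$
\int_X v\, H_m(P(u+t_0v)) \leq \liminf \frac{g(t)-g(t_0)}{t-t_0} \leq \limsup \frac{g(t)-g(t_0)}{t-t_0} \leq \int_X v\, H_m(P(u+t_0v)),
$$
using weak continuity of $H_m$ along the uniformly convergent (hence monotone-extractable) family $P(u+tv) \to P(u+t_0v)$ together with quasi-continuity of $v$ — here Lemma \ref{lem: increase} or Theorem \ref{thm: convergence E} supplies $H_m(P(u+tv)) \weak H_m(P(u+t_0v))$. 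The one-sided argument for $t < t_0$ is symmetric. Thus $g$ is differentiable at $t_0$ with $g'(t_0) = \int_X v\, H_m(P(u+t_0v))$.

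Finally, I would check that $t \mapsto \int_X v\, H_m(P(u+tv))$ is continuous (again from weak convergence of the Hessian measures plus uniform continuity of $v$), so $g \in C^1([0,1])$ and the fundamental theorem of calculus yields
$$
E\circ P(u+v) - E\circ P(u) = g(1)-g(0) = \int_0^1 g'(t)\, dt = \int_0^1 \left[\int_X v\, H_m(P(u+tv))\right] dt,
$$
as claimed. The main obstacle I anticipate is the careful bookkeeping in the squeeze argument: controlling $P(u+tv) - P(u+t_0v)$ against $(t-t_0)v$ precisely on the two contact sets, and justifying that the errors (the discrepancy between $v$ and its sup, and the measure of the region where the bounds are not tight) vanish in the limit — this is exactly where the orthogonality relation must be used in full strength, and it is the only genuinely nontrivial point; everything else is a routine adaptation of \cite{BBGZ13}.
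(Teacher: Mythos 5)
Your overall strategy is exactly the paper's: reduce to computing $\frac{d}{dt}E\circ P(u+tv)$, squeeze the difference quotient between $\int_X v\,H_m(P(u+tv))$ and $\int_X v\,H_m(P(u+t_0v))$ using the two-sided estimate of Lemma \ref{lem: primitive} together with the orthogonality relation, pass to the limit, and integrate. However, the central step of your squeeze contains a genuine error: you assert that ``$P(u+tv)\geq u+tv$ always'', which is false --- $P(f)$ is by definition the \emph{largest} $(\omega,m)$-sh function lying \emph{below} $f$, so $P(f)\leq f$ everywhere. As a consequence, the inequality you derive on the contact set $\{P(u+t_0v)=u+t_0v\}$ points in the wrong direction and is paired with the wrong measure. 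The correct bookkeeping (which is what the paper does, stated at $t_0=0$) is as follows, for $t>t_0$: the upper bound $g(t)-g(t_0)\leq \int_X \bigl(P(u+tv)-P(u+t_0v)\bigr)\,H_m(P(u+t_0v))$ is estimated on $\{P(u+t_0v)=u+t_0v\}$, the support of $H_m(P(u+t_0v))$, where $P(u+tv)-P(u+t_0v)=P(u+tv)-(u+t_0v)\leq (u+tv)-(u+t_0v)=(t-t_0)v$ precisely because $P(u+tv)\leq u+tv$; the lower bound $g(t)-g(t_0)\geq \int_X\bigl(P(u+tv)-P(u+t_0v)\bigr)\,H_m(P(u+tv))$ is estimated on $\{P(u+tv)=u+tv\}$, where $P(u+tv)-P(u+t_0v)=(u+tv)-P(u+t_0v)\geq (t-t_0)v$ because $P(u+t_0v)\leq u+t_0v$. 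With this correction your squeeze closes and the argument coincides with the paper's.

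Two smaller points. First, for the limit $\int_X v\,H_m(P(u+tv))\to\int_X v\,H_m(P(u+t_0v))$ you should invoke continuity of $H_m$ along \emph{uniformly} convergent sequences of continuous $(\omega,m)$-sh functions (uniform convergence being supplied by Lemma \ref{lem: proj stable}), rather than Lemma \ref{lem: increase} or Theorem \ref{thm: convergence E}: the family $t\mapsto P(u+tv)$ is not monotone in $t$ unless $v$ has a sign. Second, the paper performs the reduction once at $t_0=0$, observing that the general case follows by replacing $u$ with $u+t_0v$ (and $v$ by $-v$ for the left derivative); your formulation at a general $t_0$ is equivalent but not more general.
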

\begin{proof}
One could prove the lemma by following  \cite{BB08}. But we give here a slightly different (and simpler) proof using the same ideas.
%By using dominated convergence theorem and the continuity of the Hessian operator $H_m$ we can assume that $v$ is smooth on $X$. By rescaling $v$ we also can assume that $v\in \SHXo$.

Observe that it is equivalent to showing that 
\begin{equation}
\label{eq: BB08 1}
\frac{d E\circ P(u+tv)}{dt}\big \vert_{t=0} = 
\int_X v H_m(P(u)).
\end{equation}
By changing $v$ to $-v$ it suffices to take care of the right derivative. Fix $t>0$. It follows from Lemma \ref{lem: primitive} that 
\begin{eqnarray*}
\int_X \frac{ P(u+tv)-P(u)}{t} H_m(P(u+tv))& \leq &  \frac{E\circ P(u+tv)-E(P(u))}{t}\\
& \leq &  \int_X \frac{ P(u+tv)-P(u)}{t} H_m(P(u)).
\end{eqnarray*}
Since $\int_X (u-P(u)) H_m(P(u))=0$ as follows from Theorem \ref{theorem: orthogonal} the second inequality above yields
the inequality "$\leq$" in (\ref{eq: BB08 1}). On the other hand 
the first inequality above coupled with the orthogonal relation gives 
\begin{eqnarray*}
\frac{E\circ P(u+tv)-E(P(u))}{t} &\geq &\int_X \frac{ P(u+tv)-P(u)}{t}
H_m(P(u+tv)) \\
& = & \int_X \frac{ u+tv-P(u)}{t}
H_m(P(u+tv)) \\
& \geq  & \int_X v
H_m(P(u+tv)) .
\end{eqnarray*}
By letting $t\to 0^+$ we get the inequality "$\geq$" in (\ref{eq: BB08 1}) since $H_m$ is continuous under uniform convergence. The proof is thus complete.
\end{proof}

\begin{thm}
Fix $\f \in \Ect$ and $v \in \Cc(X,\R)$. Then the function 
$t \mapsto E\circ P(\f+tv)$ is differentiable at zero, with 
$$
\frac{d E\circ P(\f+tv)}{dt}\big \vert_{t=0} = \int_X v H_m(\f).
$$
\end{thm}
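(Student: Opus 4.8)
The plan is to differentiate $D(t):=E\circ P(\f+tv)$ at $t=0$. Since $\f\in\SHXo$ one has $P(\f)=\f$, so $D(0)=E(\f)$ and the assertion is exactly that $t^{-1}\big(D(t)-D(0)\big)\to\int_X vH_m(\f)$ as $t\to0$. First I would record the squeeze $\f-|t|\sup_X|v|\le P(\f+tv)\le\f+|t|\sup_X|v|$ (the left inequality because $\f-|t|\sup_X|v|$ is itself $(\omega,m)$-subharmonic and lies below $\f+tv$), which gives $\sup_X|P(\f+tv)-\f|\le|t|\sup_X|v|$; in particular, by monotonicity of $E$ and $E(\cdot+c)=E(\cdot)+c$, we get $P(\f+tv)\in\Ect$ with $E(P(\f+tv))\to E(\f)$ as $t\to0$.

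The key step is to upgrade Lemma \ref{lem: BB08} from continuous data to $\f\in\Ect$. I would pick $\f_j\in\SHXo\cap\Cc^{\infty}(X)$ decreasing to $\f$ (Corollary \ref{cor: smooth approximation}); since each $\f_j$ is $(\omega,m)$-sh we have $P(\f_j)=\f_j$, and Lemma \ref{lem: BB08} applied to $u=\f_j$ with perturbation $tv$ gives
$$E\circ P(\f_j+tv)-E(\f_j)=t\int_0^1\Big[\int_X v\,H_m\big(P(\f_j+stv)\big)\Big]\,ds.$$
Letting $j\to\infty$: one checks $\f_j+stv\downarrow\f+stv$ and hence $P(\f_j+stv)\downarrow P(\f+stv)$ (the decreasing limit is $(\omega,m)$-sh, lies below $\f+stv$, and dominates $P(\f+stv)$); since $P(\f_j+stv)\ge\f-|t|\sup_X|v|$ the energies are bounded below uniformly in $j$, so Theorem \ref{thm: convergence E} yields $H_m(P(\f_j+stv))\weak H_m(P(\f+stv))$, whence $\int_X vH_m(P(\f_j+stv))\to\int_X vH_m(P(\f+stv))$; the integrands being bounded by $\sup_X|v|$, dominated convergence in $s$ applies, and likewise $E(\f_j)\to E(\f)$ and $E\circ P(\f_j+tv)\to D(t)$. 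This produces
$$\frac{D(t)-D(0)}{t}=\int_0^1\Big[\int_X v\,H_m\big(P(\f+stv)\big)\Big]\,ds.$$
It then remains to let $t\to0$: for each fixed $s\in[0,1]$ we have $\sup_X|P(\f+stv)-\f|\le|t|\sup_X|v|\to0$, so everything reduces to the stability claim that $w_k,w\in\Ect$ with $\sup_X|w_k-w|\to0$ force $H_m(w_k)\weak H_m(w)$. Granting this, $\int_X vH_m(P(\f+stv))\to\int_X vH_m(\f)$ for every $s$, and a last dominated convergence in $s$ gives $t^{-1}(D(t)-D(0))\to\int_0^1\big(\int_X vH_m(\f)\big)ds=\int_X vH_m(\f)$, proving differentiability at zero with the stated derivative.

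To prove the claim I would take $\chi\in\Cc^{\infty}(X)$, telescope
$$H_m(w_k)-H_m(w)=dd^c(w_k-w)\wedge\sum_{l=0}^{m-1}(\omega+dd^cw_k)^l\wedge(\omega+dd^cw)^{m-1-l}\wedge\omega^{n-m}=:dd^c(w_k-w)\wedge S_k,$$
with $S_k\ge0$ a closed mixed current of mass $\int_XS_k\wedge\omega\le m$ (mixed Hessian masses of finite-energy functions being at most $1$), integrate by parts to get $\int_X\chi\,(H_m(w_k)-H_m(w))=\int_X(w_k-w)\,dd^c\chi\wedge S_k$, and bound this by $\sup_X|w_k-w|\cdot\|\chi\|_{\Cc^2}\cdot m\to0$; since $\int_XH_m(w_k)=\int_XH_m(w)=1$, the case $\chi\in\Cc(X)$ follows by uniform approximation of $\chi$ by smooth functions.

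The main obstacle is this stability claim, and inside it the two pieces of finite-energy calculus it rests on: the validity of the telescoping identity and of the integration by parts for mixed products of possibly unbounded finite-energy $(\omega,m)$-sh functions, together with the uniform bound on their mixed masses. These are the Hessian analogues of standard facts in the $\omega$-psh setting and, like the rest of the energy section, are expected to transfer verbatim from \cite{BBGZ13} and \cite{GZ07}; one must also justify the monotone-limit manipulations of the second step (that $P(\f_j+stv)\downarrow P(\f+stv)$ inside $\Ect$ with energies bounded below uniformly in $j$), and it is precisely there — and in the finiteness of the mixed masses — that the hypothesis $\f\in\Ect$ is used.
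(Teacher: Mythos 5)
Your proposal is correct and follows essentially the same route as the paper: approximate $\f$ from above by smooth $(\omega,m)$-sh functions (Theorem \ref{theorem: smooth approx}), apply Lemma \ref{lem: BB08} to the smooth approximants, and pass to the limit using the continuity of $H_m$ along decreasing sequences and under uniform convergence. You simply make explicit the convergence details (the squeeze $\sup_X|P(\f+tv)-\f|\le |t|\sup_X|v|$, the identification $P(\f_j+stv)\downarrow P(\f+stv)$, and the integration-by-parts stability of $H_m$) that the paper compresses into the phrase ``by the continuity of $H_m$ we can assume $\f$ is smooth.''
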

\begin{proof}
As in the previous lemma it suffices  to prove that 
$$
E\circ P(\f+v)-E\circ P(\f) = \int_0^1 \left[ \int_X v H_m(P(\f+tv)) \right] dt,
$$
for every $\f \in \Ect$ and $v\in \Cc(X,\R)$. It follows from our approximation theorem (Theorem \ref{theorem: smooth approx}) that we can find a sequence of smooth $(\omega,m)$-sh functions decreasing to 
$\f$. By the continuity of $H_m$ we thus can assume that $\f$ is smooth. The result now follows from Lemma \ref{lem: BB08}.
\end{proof}

\section{Resolution of the degenerate complex Hessian equation}
Let $\mu$ be a probability measure on $X$ which does not charge 
$m$-polar sets. We study the following degenerate complex Hessian equation
\begin{equation}
\label{eq: hes energy}
(\omega +dd^c \f)^m \wedge \omega^{n-m} = \mu .
\end{equation}

\begin{thm}\label{thm: dirichlet principle}
Let $\mu$ be a probability measure such that $\mu\leq A \Capm$ for some positive constant $A$ . If $\Fcm$ is proper, then there exists 
$\f\in \Ect$ which solves (\ref{eq: hes energy}) and such that
$$
\Fcm(\f) = \sup_{\Ect} \Fcm .
$$
\end{thm}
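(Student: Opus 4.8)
The plan is to follow the variational scheme of \cite{BBGZ13}: first produce a maximizer of $\Fcm$ over $\Ect$ by a compactness argument, then show that maximality forces the Euler--Lagrange equation $H_m(\f)=\mu$, which is exactly (\ref{eq: hes energy}).

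For the existence of a maximizer I would first normalize: since $\Fcm(\f+c)=\Fcm(\f)$ and $E(\f+c)=E(\f)+c$, it suffices to work on the slice $\{\int_X\f\,\omega^n=0\}$, on which Hartogs' lemma gives $0\le\sup_X\f\le C''$ uniformly and $E(\f)\le\int_X\f\,\omega^n=0$. Because $\mu\le A\Capm$, Lemma~\ref{lem: cap and E1} bounds $\int_X\f^2\,d\mu$ uniformly over $\EctC$, so $\EctC\subset L^1(\mu)$ and Lemma~\ref{lem: properness} applies, giving $\Fcm(\f)\le E(\f)+C|E(\f)|^{1/2}\le C^2/4$ for normalized $\f$; hence $S:=\sup_{\Ect}\Fcm<+\infty$. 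For a maximizing sequence $(\f_j)$ of normalized functions the same inequality forces $E(\f_j)$ to stay bounded below (this is precisely the properness hypothesis), so after subtracting $\sup_X\f_j$ I may assume $\f_j\in\EctC$ for a fixed $C$. Compactness of $\EctC$ yields an $L^1$-limit $\f\in\EctC\subset\Ect$, and the upper semicontinuity of $\Fcm$ on $\EctC$ (Lemma~\ref{lem: u.s.c.}, which again uses $\mu\le A\Capm$ through Lemmas~\ref{lem: cap and E1} and~\ref{lem: Banach-Sak}) gives $\Fcm(\f)\ge\limsup_j\Fcm(\f_j)=S$, so $\f$ realizes the supremum.

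To extract the equation, fix $v\in\Cc(X,\R)$ and set $g(t):=E\circ P(\f+tv)-\int_X(\f+tv)\,d\mu$. From $\f-|t|\,\|v\|_\infty\le P(\f+tv)\le\f+tv$ one checks that $E\circ P(\f+tv)\ge E(\f)-|t|\,\|v\|_\infty>-\infty$, so $P(\f+tv)\in\Ect$; since $\mu$ is a probability measure and $P(\f+tv)\le\f+tv$,
\[
g(t)\ \le\ E\circ P(\f+tv)-\int_X P(\f+tv)\,d\mu\ =\ \Fcm\big(P(\f+tv)\big)\ \le\ S\ =\ \Fcm(\f)\ =\ g(0),
\]
the last equality because $P(\f)=\f$. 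Thus $g$ attains an interior maximum at $t=0$. By the differentiability theorem for $E\circ P$ along $\f+tv$ established at the end of Section 6, $t\mapsto E\circ P(\f+tv)$ is differentiable at $0$ with derivative $\int_X v\,H_m(\f)$, while $t\mapsto\int_X(\f+tv)\,d\mu$ is affine; hence $g$ is differentiable at $0$ and $g'(0)=\int_X v\,H_m(\f)-\int_X v\,d\mu=0$. Letting $v$ range over $\Cc(X,\R)$ gives $H_m(\f)=\mu$, i.e.\ $\f$ solves (\ref{eq: hes energy}).

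The main obstacle is this last step, and more precisely the input that $t\mapsto E\circ P(\f+tv)$ is differentiable at $0$ with derivative $\int_X v\,H_m(\f)$: this is exactly where the two structural results built earlier are indispensable — the smooth approximation of arbitrary $(\omega,m)$-sh functions (Theorem~\ref{theorem: smooth approx}), used to reduce to smooth $\f$, and the orthogonality relation $\int_X(u-P(u))\,H_m(P(u))=0$ (Theorem~\ref{theorem: orthogonal}), which drives Lemma~\ref{lem: BB08}. Everything else is routine: the compactness and semicontinuity inputs are quoted verbatim from the preceding lemmas, and the only remaining verifications are that $P(\f+tv)\in\Ect$ and that $t=0$ is interior to the domain of $g$ — both automatic, since $\f+tv$ always dominates the $(\omega,m)$-sh function $\f-|t|\,\|v\|_\infty$.
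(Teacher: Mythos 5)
Your proposal is correct and follows essentially the same route as the paper: a maximizer is produced on a compact slice $\EctC$ via properness, translation invariance, and the upper semicontinuity of $\Fcm$ (Lemma \ref{lem: u.s.c.}), and the Euler--Lagrange equation is then extracted from the same auxiliary function $g(t)=E\circ P(\f+tv)-\int_X(\f+tv)\,d\mu$ together with the differentiability of $E\circ P$ at $t=0$. The extra details you supply (normalization on $\{\int_X\f\,\omega^n=0\}$, the bound $\Fcm\le E+C|E|^{1/2}\le C^2/4$, and the verification that $P(\f+tv)\in\Ect$) are accurate fillings-in of steps the paper leaves implicit.
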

\begin{proof}
The proof is a word-by-word copy of \cite{BBGZ13}. We recall the arguments below.

Since $\Fcm$ is invariant by translations and proper, we can find 
$C > 0$ so large that
$$
\sup_{\Ect} \Fcm  = \sup_{\EctC} \Fcm .
$$
Recall that by definition
$$
\EctC := \{\f\in \Ect\setdef \sup_X \f\leq 0 , \ E(\f) \geq -C\}
$$
is a compact convex subset of $\SHXo$.
It follows from Lemma \ref{lem: u.s.c.} that $\Fcm$ is upper semi-continuous on $\EctC$, thus we can find $\f \in \EctC$ which maximizes the functional $\Fcm$ on $\Ect$.

Fix $v\in \Cc(X,\R)$ an arbitrary continuous function on $X$ and consider
$$
g(t) := E\circ P(\f+tv) - \int_X (\f+tv) d\mu , t\in \R .
$$
Then for every $t\in \R$, 
$$
g(t) \leq E\circ P(\f+tv) - \int_X P(\f+tv) d\mu =\Fcm(P(\f+tv)) \leq \Fcm(\f) =g(0).
$$
Thus $g$ attains its maximum at $0$ and hence by differentiability of $g$ at $0$ we have $g'(0)=0$ which implies
$$
\int_X vd\mu = \int_X v H_m(\f) .
$$
Since $v$ has been chosen arbitrarily the conclusion follows.
\end{proof}

\begin{thm}
Let $\mu$ be a probability measure on $X$. Then $\Ect\subset L^1(\mu)$
if and only if $\mu=H_m(\f)$ for some $\f\in \Ect$.
\end{thm}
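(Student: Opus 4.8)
The plan is to prove the two implications separately. The forward implication ``$\mu=H_m(\f)$ for some $\f\in\Ect$ $\Rightarrow$ $\Ect\subset L^1(\mu)$'' is a soft consequence of the energy estimates. Given $u\in\Ect$, I normalise $u\le 0$ and $\f\le 0$ and apply the upper estimate of Lemma \ref{lem: primitive} to the pair $(u,\f)$: $E(u)-E(\f)\le \int_X(u-\f)\,H_m(\f)=\int_X u\,H_m(\f)-\int_X\f\,H_m(\f)$. Since $E(u),E(\f)$ are finite and $\int_X\f\,H_m(\f)$ is finite (definition of $\Ect$), the left-hand side being $>-\infty$ forces $\int_X u\,H_m(\f)>-\infty$; as $u\le 0$ this says $u\in L^1(\mu)$. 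This is the easy half, and I expect no obstacle there.

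For the converse, the strategy is the variational argument of \cite{BBGZ13}, built on the ingredients already assembled here: the Dirichlet principle (Theorem \ref{thm: dirichlet principle}), the properness criterion (Lemma \ref{lem: properness}) and the differentiability theorem proven just above. First I check that $\mu$ charges no $m$-polar set: any such set is contained in $\{\rho=-\infty\}$ for some $\rho\in\Ect$ (produce $\rho_0\in\SHXo$ with $\rho_0=-\infty$ there via the Josefson-type theorem above, then compose with a slowly growing convex increasing weight to land in $\Ect$; cf. \cite{Lu13a,GZ07}), so $\rho\in L^1(\mu)$ makes that set $\mu$-negligible. Hence $\Fcm=E-\int_X\cdot\,d\mu$ is well defined and finite on $\Ect$, and proper by Lemma \ref{lem: properness}. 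Next I reduce to a measure dominated by capacity: by a generalised Radon--Nikodym theorem for measures not charging $m$-polar sets (see \cite{BBGZ13,Lu13a}) write $\mu=f\,H_m(\phi)$ with $\phi\in\Ect\cap L^\infty(X)$, $-1\le\phi\le 0$, and $0\le f\in L^1(H_m(\phi))$, and set $\mu_j:=c_j\min(f,j)H_m(\phi)$ with $c_j\downarrow 1$ chosen so that $\mu_j(X)=1$. Then $\mu_j\le A_j\,\Capm$ (because $H_m(\phi)\le\Capm$ when $-1\le\phi\le 0$) and $\mu_j\le 2\mu$ for $j$ large, so $\Ect\subset L^1(\mu_j)$ by the half already proved and $\Fc_{\mu_j}$ is proper by Lemma \ref{lem: properness}; Theorem \ref{thm: dirichlet principle} then yields $\f_j\in\Ect$ with $H_m(\f_j)=\mu_j$ which maximises $\Fc_{\mu_j}$ on $\Ect$, normalised by $\sup_X\f_j=0$. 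Since $\mu_j\le 2\mu$ and $\Ect\subset L^1(\mu)$, the properness constant is uniform in $j$, and comparing the maximiser $\f_j$ with the competitor $0\in\Ect$ gives a uniform bound $E(\f_j)\ge -C$; thus $\{\f_j\}\subset\EctC$, which is compact, and along a subsequence $\f_j\to\f$ in $L^1(X)$ with $\f\in\EctC\subset\Ect$.

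The remaining, and genuinely delicate, point is to identify $H_m(\f)=\mu$, and this is where the main obstacle lies: $\mu$ is not dominated by $\Capm$, so the upper semicontinuity of $\Fcm$ on $\EctC$ (Lemma \ref{lem: u.s.c.}) is not directly available and must be replaced. The idea is to show that $\f$ itself maximises $\Fcm$ on $\Ect$ and then invoke Euler--Lagrange: for any $v\in\Cc(X,\R)$ one has $E\circ P(\f+tv)-\int_X(\f+tv)\,d\mu\le\Fcm(P(\f+tv))\le\Fcm(\f)$ with equality at $t=0$, so by the differentiability theorem the derivative $\int_X v\,H_m(\f)-\int_X v\,d\mu$ vanishes, whence $H_m(\f)=\mu$. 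To see that $\f$ is a maximiser, one first checks $\Fc_{\mu_j}(\f_j)=\sup_{\Ect}\Fc_{\mu_j}\to\sup_{\Ect}\Fcm$: the bound $\limsup\le$ follows from $\Fc_{\mu_j}(\p)\le\Fcm(\p)+(c_j-1)\int_X(-\p)\,d\mu$ together with the uniform properness (which confines the suprema to a fixed $\EctC$ on which $\int_X(-\p)\,d\mu$ is bounded), and $\liminf\ge$ from $\Fc_{\mu_j}(\p)\to\Fcm(\p)$ for each fixed $\p\in\Ect$ by dominated convergence (using $|\p|f\in L^1(H_m(\phi))$, i.e. $\p\in L^1(\mu)$). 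One then passes to the limit in $\Fc_{\mu_j}(\f_j)$ by combining the upper semicontinuity of $E$ with a Fatou-type estimate, available because $\limsup_j\f_j=\f$ quasi-everywhere, hence $\mu$-a.e. since $\mu$ ignores $m$-polar sets, to conclude $\Fcm(\f)=\sup_{\Ect}\Fcm$. The hard part is exactly this last semicontinuity/Fatou step, which substitutes for the missing capacity-domination hypothesis of Theorem \ref{thm: dirichlet principle}; it is carried out as in \cite{BBGZ13}, the Banach--Saks mechanism of Lemma \ref{lem: Banach-Sak} being the natural tool once one has enough control on the $\f_j$.
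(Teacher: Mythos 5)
Your forward implication and the first two thirds of the converse match the paper: the paper also deduces integrability from the energy inequalities of Lemma \ref{lem: primitive} (via \cite{GZ07}), decomposes $\mu$ against the compact convex set $\{\nu\le\Capm\}$ using Rainwater's lemma (your $\mu=f\,H_m(\phi)$ is a cosmetic variant), solves for the truncations $\mu_j=c_j\min(f,j)\nu$ by Theorem \ref{thm: dirichlet principle}, and gets the uniform bound on $E(\f_j)$ from properness. Up to there you are on the paper's track.

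The final identification $H_m(\f)=\mu$ is where you diverge, and where your plan has a genuine gap. You propose to show that $\f$ maximizes $\Fcm$ and rerun the Euler--Lagrange argument; this requires $\limsup_j\Fc_{\mu_j}(\f_j)\le\Fcm(\f)$, hence $\limsup_j\int_X(-\f_j)\,d\mu\le\int_X(-\f)\,d\mu$. Your ``Fatou-type estimate'' gives the opposite inequality: since $-\f_j\ge0$ and $\liminf_j(-\f_j)=-\f$ $\mu$-a.e., Fatou yields $\int_X(-\f)\,d\mu\le\liminf_j\int_X(-\f_j)\,d\mu$, which is useless here. The other tool you invoke, Lemma \ref{lem: Banach-Sak}, needs $\sup_j\int_X\f_j^2\,d\mu<+\infty$; that bound is obtained in Lemma \ref{lem: u.s.c.} precisely from the hypothesis $\mu\le A\,\Capm$ via Lemma \ref{lem: cap and E1}, and for a general $\mu$ with only $\Ect\subset L^1(\mu)$ no such $L^2(\mu)$ estimate is established (and it is not clear how to get one). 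So the semicontinuity you need is exactly the missing ingredient, not something that can be waved through ``as in \cite{BBGZ13}''. The paper avoids the issue entirely with a measure-theoretic trick: since $H_m(\f_k)=\mu_k\ge\min(f,j)\nu$ for all $k\ge j$, Proposition \ref{prop: maximum comparison} gives $H_m(\phi_j)\ge\min(f,j)\nu$ for $\phi_j:=(\sup_{k\ge j}\f_k)^*$; then $\phi_j\downarrow\f$, Theorem \ref{thm: convergence E} passes the inequality to the limit to give $H_m(\f)\ge\mu$, and equality follows because both sides are probability measures. You should replace your last step by this argument (or supply the missing $L^2(\mu)$ bound, which I do not believe is available).
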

\begin{proof}
If $\mu=H_m(\f)$ for some $\f\in \EcXo$ then for any $\p\in \EcXo$,
$$
\int_X \p H_m(\f) >-\infty,
$$
since by the comparison principle  we can prove that (see \cite[Proposition 2.5]{GZ07})
$$
\int_X \p H_m(\f) \geq 2(m+1) (E(\f) +E(\p)) >-\infty.
$$
Assume now that $\Ect\subset L^1(\mu)$. In particular, $\mu$
does not charge $m$-polar sets. Observe first that the set
$$
\Mcc:=\{\nu \in \Pc(X) \setdef \nu \leq  \Capm\},
$$
where $\Pc(X)$ is the space of probability measures on $X$,
is a compact convex subset of $\Pc(X)$. Indeed, the convexity is clear while the compactness follows from the outer regularity of the $m$-capacity (see Theorem \ref{thm: cap formula}). 
Using \cite{Rai69}, we project $\mu$ on this compact convex set (the original idea of this proof is due to Cegrell \cite{Ceg98}) 
$$
\mu = f\nu +\sigma ,
$$
where $\nu \in \Mcc$, $0\leq f \in L^1(\nu)$ and $\sigma \perp \Mcc$.
Since $\mu$ vanishes on $m$-polar sets one has $\sigma\equiv 0$.
Set $\mu_j:=c_j\min(f,j)\nu$ where $c_j$ is a normalization constant so that $\mu_j$ is a probability measure. Since $\mu_j \leq jc_j  \Capm$ it follows from Theorem \ref{thm: dirichlet principle} that there exists $\f_j \in \EcXo$ such that $\mu_j=H_m(\f_j)$. We normalize $\f_j$ so that $\sup_X \f_j=0$. We can also assume that 
$\f_j \rightarrow \f\in \SHXo$ in $L^1(X)$. Now
$$
|E(\f_j)| \leq \int_X (-\f_j)H_m(\f_j) \leq c_j\int_X (-\f_j)d\nu \leq 
C|E(\f_j)|^{1/2},
$$
as follows from Lemma \ref{lem: properness}. It follows that $E(\f_j)$ is uniformly bounded and hence $\f\in \Ect$. Now consider
$$
\phi_j:=(\sup_{k\geq j}\f_k)^*.
$$
Then $\phi_j \downarrow \f$ and it follows from Proposition \ref{prop: maximum comparison} that 
$$
H_m(\phi_j) \geq  \min(f,j) \nu .
$$
Hence $H_m(\f) \geq \mu$ whence equality since both of them are probability measures.
\end{proof}

\begin{thm}
Let $\mu$ be a probability measure on $X$ which does not charge $m$-polar sets. Then there exists  $\f\in \EcXo$  such that $H_m(\f)=\mu$.
\end{thm}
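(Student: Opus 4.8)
The plan is to deduce this from the preceding theorem by a Cegrell-type approximation of $\mu$. That theorem already solves $H_m(\f)=\mu$ whenever $\Ect\subset L^1(\mu)$; so I would approximate a general $\mu$ (charging no $m$-polar set) by such measures, solve, and pass to the limit. The single point not covered by the earlier arguments is that the limiting potential must be shown to lie in $\EcXo$ — it cannot be expected in $\Ect$, since nothing controls the energies of the approximants.

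First I would reduce $\mu$ to a convenient form, exactly as in the proof of the preceding theorem: projecting $\mu$ onto the compact convex set $\Mcc=\{\nu\in\Pc(X):\nu\le\Capm\}$ via \cite{Rai69} (following Cegrell \cite{Ceg98}) yields $\mu=f\nu$ with $\nu\in\Mcc$ and $0\le f\in L^1(\nu)$, the singular part vanishing because any measure mutually singular with $\Mcc$ is carried by an $m$-polar set and $\mu$ charges none. Set $\mu_j':=\min(f,j)\,\nu$, so that $\mu_j'\nearrow\mu$ and $\mu_j'(X)\uparrow\mu(X)=1$, and let $\mu_j:=c_j\mu_j'$ with $c_j:=\mu_j'(X)^{-1}\ge 1$, $c_j\to 1$; then $\mu_j$ is a probability measure with $\mu_j\le jc_j\Capm$, so $\Ect\subset L^1(\mu_j)$ (Lemma \ref{lem: cap and E1}) and the preceding theorem supplies $\f_j\in\Ect$ with $H_m(\f_j)=\mu_j$. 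Normalizing so that $\sup_X\f_j=0$ and passing to a subsequence, $\f_j\to\f$ in $L^1(X)$ for some $\f\in\SHXo$ with $\f\not\equiv-\infty$ (Hartogs). I then set $\phi_j:=(\sup_{k\ge j}\f_k)^*$; since $\phi_j\ge\f_j\in\EcXo$, stability of $\EcXo$ under domination from above gives $\phi_j\in\EcXo$, and $\sup_X\phi_j=0$, $\phi_j\downarrow\f$. Finally, $H_m(\f_k)=\mu_k\ge\mu_j'$ for all $k\ge j$, so applying Proposition \ref{prop: maximum comparison} to finite maxima and then Theorem \ref{thm: convergence E} along the increasing sequence $\max(\f_j,\dots,\f_N)\in\EcXo$ gives $H_m(\phi_j)\ge\mu_j'$.

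The heart of the matter, and the step I expect to be the main obstacle, is to show $\f\in\EcXo$, i.e.\ that no Hessian mass escapes to $\{\f=-\infty\}$. I would argue as follows. Fix $k$; as $\max(\phi_j,-k)\downarrow\max(\f,-k)$ with all terms bounded, continuity of the Hessian operator gives $H_m(\max(\phi_j,-k))\weak H_m(\max(\f,-k))$. On the closed set $K_k:=\{\f\ge-k+1\}$ one has $\phi_j\ge\f\ge-k+1>-k$, so the maximum principle yields $H_m(\max(\phi_j,-k))=H_m(\phi_j)$ there, whence $H_m(\max(\phi_j,-k))(K_k)\ge\mu_j'(K_k)$. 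Letting $j\to\infty$, the portmanteau inequality for closed sets together with $\mu_j'(K_k)\uparrow\mu(K_k)$ gives $H_m(\max(\f,-k))(K_k)\ge\mu(K_k)$, hence $\int_{\{\f>-k\}}H_m(\max(\f,-k))\ge\mu(\{\f\ge-k+1\})$. As $k\to\infty$ the left side increases to $\int_X H_m(\f)\le 1$ and the right side increases to $\mu(\{\f>-\infty\})=1$, since $\{\f=-\infty\}$ is $m$-polar and $\mu$ does not charge it; therefore $\int_X H_m(\f)=1$, i.e.\ $\f\in\EcXo$.

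Once $\f\in\EcXo$, the argument finishes as in the preceding theorem: $\phi_j\downarrow\f$ with every term in $\EcXo$, so Theorem \ref{thm: convergence E} gives $H_m(\phi_j)\weak H_m(\f)$, and from $H_m(\phi_j)\ge\mu_j'$ and $\mu_j'\to\mu$ we get $H_m(\f)\ge\mu$, hence $H_m(\f)=\mu$ since both are probability measures. All remaining ingredients are direct transcriptions of \cite{BBGZ13} already carried out above; the only genuinely new work is the mass-tightness argument of the third paragraph, which replaces the (here unavailable) uniform energy bound used in the preceding theorem.
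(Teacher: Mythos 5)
Your proof is correct and follows the route the paper intends: the paper's own ``proof'' is only a pointer to \cite{BBGZ13}, and your argument is a faithful, correctly detailed version of that strategy (Rainwater--Cegrell decomposition $\mu=f\nu$, truncation, solving via the preceding theorem, and passing to the limit through $\phi_j:=(\sup_{k\ge j}\f_k)^*$). In particular the mass-tightness step you isolate --- proving $\int_X H_m(\f)=1$ by testing on the closed sets $\{\f\ge -k+1\}$ and using that $\mu$ does not charge the $m$-polar set $\{\f=-\infty\}$ --- is exactly the point requiring care, and you handle it correctly.
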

\begin{proof}
One can repeat the arguments in \cite{BBGZ13}. 
\end{proof}

\subsection*{Concluding remarks}

The principal result of this paper is the regularization theorem. It is amazing that we can directly regularize any $(\omega,m)$-sh functions by solving appropriate complex Hessian equations. On the way to regularize singular functions we also proved the orthogonal relation is the second amazing thing. The classical method to prove such a thing is to use the balayage argument which is now possible thanks to the resolution of the corresponding local Dirichlet problem
\cite{Pl13}. 

One can also carry a similar study of a potential theory for $(\omega,m)$-subharmonic functions in $\C^n$ with $\omega$ being 
any K\"ahler metric.

\end{document}